\DeclareMathOperator*{\diag}{diag}
\newcommand{\abs}[1]{\left\lvert#1\right\rvert}
\newcommand{\babs}[1]{\big|#1\big|}
\newcommand{\R}{\mathbb{R}}
\renewcommand{\v}[1]{\bm{#1}}
\newcommand{\vzero}{\v{0}}
\newcommand{\vones}{\v{1}}
\newcommand{\vq}{\v{q}}
\newcommand{\vQ}{\v{Q}}
\newcommand{\vv}{\v{v}}
\newcommand{\vV}{\v{V}}
\newcommand{\vx}{\v{x}}
\newcommand{\hvx}{\hat{\vx}}
\newcommand{\bvx}{\bar{\vx}}
\newcommand{\vX}{\v{X}}
\newcommand{\hvX}{\hat{\vX}}
\newcommand{\vw}{\v{w}}
\newcommand{\hvw}{\hat{\vw}}
\newcommand{\bvw}{\bar{\vw}}
\newcommand{\vW}{\v{W}}
\newcommand{\hvW}{\hat{\vW}}
\newcommand{\vf}{\v{f}}
\newcommand{\vF}{\v{F}}
\newcommand{\vH}{\v{H}}
\newcommand{\vu}{\v{u}}
\newcommand{\vU}{\v{U}}
\newcommand{\vd}{\v{d}}
\newcommand{\vp}{\v{p}}
\newcommand{\vy}{\v{y}}
\newcommand{\vtheta}{\v{\theta}}
\newcommand{\vzeta}{\v{\zeta}}
\newtheorem{theorem}{Theorem}
\newtheorem{lemma}{Lemma}
\newtheorem{prop}{Proposition}
\newtheorem{corollary}{Corollary}
\newtheorem{definition}{Definition}
\newtheorem{remark}{Remark}
\newtheorem{assumption}{Assumption}
\newtheorem{example}{Example}
\newcommand{\myfnsymbol}[1]{%
	\expandafter\@myfnsymbol\csname c@#1\endcsname
}
\newcommand{\@myfnsymbol}[1]{%
	\ifcase #1
	\or \TextOrMath{\textasteriskcentered}{*}%
	\or \TextOrMath{\textdagger}{\dagger}%
	\or 1%
	\or 2%
	\fi
}
\newcommand{\published}{\@myfnsymbol{1}}
\newcommand{\affiliation}{\@myfnsymbol{2}}
\newcommand{\emailA}{\@myfnsymbol{3}}
\newcommand{\emailB}{\@myfnsymbol{4}}
\title{Energy-efficient flocking with nonlinear navigational feedback\textsuperscript{\published}}
\author{Oleksandr Dykhovychnyi\textsuperscript{\affiliation,\emailA} \qquad Alexander Panchenko\textsuperscript{\affiliation,\emailB}}
\date{}
\begin{document}

\maketitle

\renewcommand{\thefootnote}{\myfnsymbol{footnote}}
\maketitle
\footnotetext[1]{Published in Nonlinear Dynamics. DOI: \href{https://doi.org/10.1007/s11071-024-10527-9}{10.1007/s11071-024-10527-9}}%
\footnotetext[2]{Department of Mathematics and Statistics, Washington State University, Pullman, WA 99164, USA.}%
\footnotetext[3]{o.dykhovychnyi@wsu.edu.}%
\footnotetext[4]{panchenko@wsu.edu.}%

\setcounter{footnote}{0}
\renewcommand{\thefootnote}{\fnsymbol{footnote}}

\begin{abstract}
	Modeling collective motion in multi-agent systems has gained significant attention. Of particular interest are sufficient conditions for flocking dynamics. We present a generalization of the multi-agent model of Olfati--Saber \cite{olfati2006flocking} with nonlinear navigational feedback forces. Unlike the original model, ours is not generally dissipative and lacks an obvious Lyapunov function. We address this by proposing a method to prove the existence of an attractor without relying on LaSalle's principle. Other contributions are as follows. We prove that, under mild conditions, agents' velocities approach the center of mass velocity exponentially, with the distance between the center of mass and the virtual leader being bounded. In the dissipative case, we show existence of a broad class of nonlinear control forces for which the attractor does not contain periodic trajectories, which cannot be ruled out by LaSalle's principle. Finally, we conduct a computational investigation of the problem of reducing propulsion energy consumption by selecting appropriate navigational feedback forces.
\end{abstract}

\section{Introduction}

Swarming behavior is an essential characteristic of various biological systems, such as flocks of birds \cite{heppner1997three, hayakawa2010spatiotemporal}, schools of fish \cite{parrish2002self, hemelrijk2005density, larsson2012fish}, insect and bacteria colonies \cite{buhl2006disorder, czirok1996formation, sokolov2007concentration}.
It often enables these groups of organisms to accomplish tasks that would be impossible for an individual organism to achieve on its own.
For instance, a group of schooling fish can confuse a predator by making itself appear as a single organism \cite{larsson2012fish}, decreasing the likelihood of the group members being preyed upon.
Such natural phenomena have inspired extensive research in the development of artificial multi-agent systems that possess swarm intelligence \cite{dorri2018multi, qin2016recent}. 
One of the important examples of such systems is robotic swarms, which have recently received significant attention \cite{barca2013swarm, brambilla2013swarm, tan2013research, connor2020current, yang2021survey}.
Robotic swarms, typically consisting of relatively inexpensive and simple agents, offer high fault tolerance, cost-effectiveness, and scalability, which make them a promising tool for a broad range of tasks \cite{schranz2020swarm}.

The applications of swimming robotic swarms range from injecting nano-scale robots into a human body for delivering drugs \cite{ceylan2017mobile, bunea2020recent} to using swarms of meter-scale autonomous underwater vehicles (AUV) to perform environment monitoring tasks in open water \cite{schill2018vertex, connor2020current}.
For modeling such systems, it is crucial to take into account hydrodynamic effects of the ambient medium.
The Dissipative Particle Dynamics (DPD) model, initially introduced in \cite{hoogerbrugge1992simulating} as a coarse-grained particle-based simulation technique for fluids, has been successfully applied to modeling collective motion in systems like nanoswimmers \cite{wang2014diffusion, xiao2015induced}, active colloidal suspensions \cite{hinz2014motility, hinz2015particle, panchenko2018spatial, barriuso2022simulating}, and red blood cells \cite{lei2012quantifying, fedosov2011predicting}.
In the DPD model, the agents are represented as point masses that experience three types of short-range pairwise forces: \textit{conservative} repulsive forces resulting from agent collisions, \textit{dissipative} damping forces modeling viscous effects of the medium, and \textit{random} forces representing stochastic effects.
In active systems like \cite{wang2014diffusion, xiao2015induced, panchenko2018spatial, hinz2014motility, hinz2015particle, barriuso2022simulating}, the agents are also capable of producing a \textit{self-propulsion} force.
In the original DPD model \cite{hoogerbrugge1992simulating}, the dissipative force, dependent on the differences in agents’ positions and velocities, is directed along the radial axis between two agents and models only the effect of extensional viscosity.
Some later extensions of the model, such as \cite{pan2008single}, \cite{junghans2008transport}, also incorporate the effects of shear viscosity by introducing an analogous force in a transverse direction, providing a more realistic representation of the viscous effects of the surrounding medium.

Terrestrial and aerial robotic swarms find their application in agriculture \cite{ball2015robotics}, rescue operations \cite{saez2010multi, hauert2009evolved}, reconnaissance for military operations \cite{dai2006prototype}, etc.
In such systems, the effects of the medium are usually assumed to be negligible and the dynamics of an agent is determined solely by its control input.
A common approach to mathematical modeling of such systems is based on the use of virtual forces \cite{bayindir2016review}.
In this approach, the control input of an agent is determined by solutions of the Newton's equations of motion, in which artificial forces, modeling agent interaction with the environment, involve measurements obtained from the agent's sensors rather than the true physical quantities.
The popular model of Olfati--Saber (O-S) \cite{olfati2006flocking} is among the ones that are based on this approach.
In this model, the velocity matching with the neighbors is achieved through the dissipative virtual force,
and the spatial formation of the group is determined by the virtual conservative force derived from an attractive/repulsive potential.
When the group is assigned an objective to follow a given target trajectory (trajectory tracking problem), a virtual leading agent can be introduced in the model.
Each agent in the group is steered to follow the virtual leader through two additional \textit{navigational feedback forces}: the conservative force, which aligns the agent's position with that of the virtual leader, and the dissipative force, which aligns the agent's velocity with that of the virtual leader.

Since the motion of agents must be organized and coherent, an important feature of collective motion is \textit{flocking}, a phenomenon when all members of the group move in the same direction with the same speed \cite{vicsek2012collective}.
For some popular models of collective motion, the conditions under which flocking occurs are well-studied \cite{vicsek1995novel, toner1998flocks, olfati2006flocking, tanner2007flocking, cucker2007emergent}.
In particular, one of the characteristics that can lead to the emergence of flocking is \textit{dissipativity}, a property of the system that its total mechanical energy is non-increasing in time \cite{brogliato2007dissipative}. 
This allows the use of mechanical energy as a Lyapunov function to show the existence of a global attractor. 
In particular, the O-S model relies on this argument to demonstrate the presence of flocking.

In the O-S model, the dynamics of the group is decoupled into the \textit{translational dynamics} of the center of mass and the \textit{structural dynamics} of the agents relative to the center of mass.
With such a decomposition, the structural dynamics does not depend on the potentially arbitrary dynamics of the virtual leader and can be shown to be dissipative and converge to the set of equilibrium solutions having zero velocities relative to the center of mass, and hence representing flocking dynamics.
However, the above decomposition relies on the fact that the navigational feedback forces are linear, which might not be flexible enough to implement an energy-efficient control.
For instance, for small deviation from the target trajectory, it might be reasonable to relax (or entirely disable) control to save the on-board energy. 
In turn, large deviations from the target trajectory might be penalized more aggressively for the same purpose.
The introduction of nonlinear navigational feedback forces, however, makes the aforementioned decoupling problematic.
Given that the coupled dynamics, in general, is not dissipative the conditions under which flocking can occur become unclear.

Another important question regarding O-S model is possibly excessive  use of limited on-board resources to maintain formation. Using potential forces for controlling formation seems natural, but it may carry a high energy cost. The question of interest is whether a reasonable formation can be maintained without attractive potential forces. 
The results obtained in the paper show that velocity alignment alone may be sufficient to ensure a good quality flock in which ambient repulsive potential forces are zero for periods of time, while the attractive control forces are not needed. Both of these factors are useful for saving on-board energy. 
The simulation results discussed in Section \ref{section:opt} show that, broadly speaking, the most energy-efficient regimes are characterized by a shorter cut-off radius for the repulsive potential forces and a longer activation radius for the position alignment forces. This allows the system to have some ``breathing room''. 
Small perturbations about an attractor state are handled by the velocity alignment forces, which saves energy. 
Our simulations imply that the OS model is likely sub-optimal. The findings are not supported by proofs and should thus be considered preliminary. 
However, the observed trend appears to be robust.

The question of relative importance of velocity alignment versus position alignment forces is clearly of interest, but its investigation in case of nonlinear navigaitonal feedback forces is complicated by lack of aforementioned decoupling.
Given that the coupled dynamics, in general, is not dissipative the conditions under which flocking can occur remain unclear.

Although numerous developments of the O-S model have been proposed since its introduction, to the best of our knowledge, this issue has not been fully addressed yet. 
Most extensions of navigational feedback forces beyond the linear case assume specific forms of nonlinearities that ultimately allow for the construction of a Lyapunov function \cite{su2011adaptive, liu2013adaptive, huang2018fixed, guo2020event, ni2022fixed, sun2019flocking, wen2015neural, wen2021optimized}.
The authors of \cite{su2011adaptive, liu2013adaptive} consider a group of agents with nonlinear intrinsic dynamics and navigational feedback gains, where the time derivatives are quadratic functions of the agents' trajectory deviations from that of the virtual leader. 
This assumption, along with a Lipschitz-like condition imposed on the intrinsic dynamic terms, ensures the existence of a Lyapunov function. In
\cite{huang2018fixed, guo2020event, ni2022fixed}, a fixed-time consensus protocol is designed by employing navigation feedback forces with polynomial navigational feedback gains.
In \cite{sun2019flocking}, more general nonlinear feedback forces are studied, but the virtual leader is required to move with a constant velocity, making the dynamics dissipative.
Some recent works also employ artificial neural networks to design control laws for flocking \cite{wen2015neural, wen2021optimized, li2023robust}. 
In particular, in the model considered in \cite{wen2015neural}, navigational feedback forces are modeled as Radial Basis Function Neural Networks (RBFNN), with a weight adaptation law that allows the construction of a Lyapunov function. A similar RBFNN-based model proposed in 
\cite{wen2021optimized} uses reinforcement learning to design an optimal weight adaptation law. Linearly parameterized neural networks to model the leader's dynamics are considered in 
\cite{li2023robust}.
While neural network-based models allow for the design of flexible control laws, they can be computationally expensive and require a high number of control parameters, which may be difficult to interpret.

In this paper, we consider the problem of trajectory tracking for micrometer- to millimeter- scale swimming robots. Our results are also applicable to flying vehicles of sub-centimeter size equipped with protective viscoelastic cage enclosures or bumpers. 
We use the DPD model to simulate interactions of the agents mediated by the ambient environment.
We assume that the agents are capable of self-propulsion, the direction and the magnitude of which are determined by means of nonlinear virtual forces analogous to the navigational feedback forces in the O-S model.
Treating the conservative and the dissipative forces in the DPD model as virtual, our model can be seen as a generalization of the O-S model with navigational feedback being nonlinear.
Although the physical nature of the two models is different, the resulting dynamics of the agents is the same.

The main contributions of the paper are as follows.

\begin{itemize}
	\item
	{\bf Global attractor:} We prove the existence of a global attractor for navigational feedback forces, which are bounded perturbations of linear ones, under the assumption that the acceleration of the virtual leader is bounded.

	\item
	{\bf Analysis of the nature of flocking:} 
	We find that the velocities of the agents converge exponentially to the center of mass velocity. 
	It may differ from the virtual leader's velocity. 
	However, the deviations are bounded and can be reduced by adjusting tunable parameters of the navigational feedback forces. 
	Thus, we show that agents flock to the center of mass, which remains close to the leader's trajectory.
	
	\item
	{\bf Attractor description:} 
	We describe the attractor for target trajectories that yield dissipative dynamics. 
	In this case, LaSalle's principle suggests that the attractor may include both equilibrium solutions and periodic rigid motions. 
	Our results exclude the existence of periodic motions for a broad class of nonlinear alignment forces. 

	\item
	{\bf Computational investigation:} 
	We explore the relative importance of velocity and position alignment forces computationally. Position-dependent potential forces include a short-range repulsive ambient forces that soften collisions and an attractive control forces that prevent agents from escaping. Setting an attractive force to zero within a sufficiently large region may yield an approximate flock in which repulsive forces vanish, with formation maintained by velocity alignment forces alone. Using numerical optimization, we find that this choice of control forces reduces the amount of energy used for propulsion. 
\end{itemize}

The class of functions that are bounded perturbations of linear ones encompasses a wide range of control protocols by allowing arbitrary nonlinearities on any bounded set containing the origin. 
Therefore, our navigational feedback forces are more flexible than those used in \cite{su2011adaptive, liu2013adaptive, huang2018fixed, guo2020event, ni2022fixed}. The only restriction we impose on the acceleration of the virtual leader is boundedness, making our model more general than that of \cite{sun2019flocking}. 
At the same time, our model is less computationally expensive than the neural network-based models proposed in \cite{wen2015neural, wen2021optimized, li2023robust} and has a small number of easily interpretable parameters.

The rest of the paper is organized as follows.
In Section \ref{section:model}, we set up our model and describe the forces that determine the dynamics of the group.
In Section \ref{section:diss}, we obtain an attractor for the case when the dynamics is dissipative due to particular choices of the target trajectories.
In Section \ref{section:wob}, we study the existence of non-equilibrium solutions in such an attractor.
In Section \ref{section:non-diss}, we consider the general case of non-dissipative dynamics and obtain conditions under which the system exhibits asymptotic flocking.
In Section \ref{section:comp}, we present numerical simulations illustrating the flocking dynamics in our system under different regimes of motion.
Finally, in Section \ref{section:opt}, we numerically solve an optimal control problem to determine configurations of the navigational feedback forces that are energy-efficient.

\section{Model of collective motion}
\label{section:model}

Consider a group of $N$ identical sphere-shaped agents submerged in water.
The size of an agent is assumed to be in the range 10$\mu$m-1mm.
Each agent is equipped with a miniature propeller, controlled by an on-board processor that receives information about the agent's position and velocity from sensors. 
Additionally, the agents have elastic bumpers to prevent mechanical damage during collisions.
For agents of the specified size range moving in water, the dynamics is characterized by a low Reynolds number (see, e.g., \cite{sitti2017mobile}), and therefore, viscous forces created by the ambient environment must be taken into account.

Modeling the agents as point masses, the dynamics of the group can be described by the following Newton's equations of motion:
\begin{equation}
	\label{model:dynamics}
	\begin{aligned}
		\dot{\vq}_{i} & = \vv_{i},
		\\
		M \dot{\vv}_{i} & = \sum_{j \ne i} \vf_{ij}^{C} + \sum_{j \ne i} \vf_{ij}^{D} + \vu_{i},
	\end{aligned}
	\quad	
	i = 1, \ldots, N,
\end{equation}
where $M$ is the mass of an agent, $\vq_{i}(t), \vv_{i}(t) \in \R^{d}$, $t \ge 0$, $d \in \{2, 3\}$, are the position and the velocity of the $i$-th agent, respectively, $\vf_{ij}^{C}$ and $\vf_{ij}^D$ are the conservative and the dissipative pairwise ambient forces acting on the $i$-th agent due to its proximity to the $j$-th agent, and $\vu_i$ is the self-propulsion force generated by the $i$-th agent's propeller. 
\begin{remark}
	\normalfont
	Besides the conservative and the dissipative terms, the original DPD model \cite{hoogerbrugge1992simulating} also includes terms representing stochastic effects.
	Here, we mainly consider models with high Peclet number. In such models, convective forces dominate while stochastic effects can be neglected. 
	However, the key results about the emergence of flocking, presented in Section \ref{section:non-diss}, still hold for the full DPD model provided that random forces are  bounded (see Remark \ref{non-diss:rmrk:random}).
\end{remark}

\subsection{Ambient forces}

For two agents in close proximity, the resistance of the water volume between them or repulsion from elastic bumpers induces a conservative \textit{repulsive} force:
\begin{equation*}
	\vf_{ij}^{C} \left(\vq_{ij}\right) = A w_{C} (\abs{\vq_{ij}}) \vq_{ij}, 
\end{equation*}
where $A > 0$ is a constant, $w_{C} : \R_{\ge 0} \to [0, 1]$ is a non-increasing $C^{1}$ weight function that vanishes on $[r_{C}, \infty)$ for some $r_{C} > 0$, and $\vq_{ij} = \vq_{i} - \vq_{j}$.
The value of $r_{C}$ is assumed to be close to the diameter of an agent and is referred to as the \textit{cut-off distance} of the conservative force.
The sum of all ambient conservative forces acting on the $i$-th agent can be expressed as the negative gradient of the collective repulsive potential $U(\v{Q})$, where $\vQ = ( \vq_{1}, \ldots \vq_{N})$:
\begin{equation*}
	\vf_{i}^{C} = \sum_{j \ne i} \vf_{ij}^{C} = - \nabla_{\vq_{i}} U(\v{q}).
\end{equation*}

Following \cite{czirok1996formation, romanczuk2012mean}, we assume that the dissipative force, resulting from viscous interaction between nearby agents and the ambient environment, is given by
\begin{equation*}
	\vf_{ij}^{D} \left(\vq_{ij}, \vv_{ij}\right) = - B w_{D} (\abs{\vq_{ij}}) \vv_{ij}, 
\end{equation*}
where $B > 0$ is a constant, $w_{D} : \R_{\ge 0} \to [0, 1]$ is a non-increasing $C^{1}$ function vanishing on $[r_{V}, \infty)$ for some cut-off distance $r_{V} > 0$, and $\vv_{ij} = \vv_{i} - \vv_{j}$.

Note that both ambient forces are symmetric in the sense that $\vf_{ji}^{C} = - \vf_{ij}^{C}$ and $\vf_{ji}^{D} = - \vf_{ij}^{D}$ for all $i \ne j$.
	
\subsection{Self-propulsion forces}

The common navigational objective of the group is to follow a specified \textit{target trajectory} while maintaining a flock formation. 
The target trajectory is defined in the form of the \textit{virtual leading agent}, with coordinates $\vq_{l}(t)$, $\vv_{l}(t) \in \mathbb{R}^{d}$, $t \ge 0$, $l = N + 1$, updated as
\begin{equation}
	\label{model:leader}
	\dot{\vq}_{l} = \vv_{l}, \quad \dot{\vv}_{l} = \vf^{L} (t),
\end{equation}
where $\vf^{L} : [0, \infty) \to \R_{\ge 0}$ is $C^{1}$. 
The nonlinear self-propulsion force produced by an agent is a navigational feedback force that aligns the agent's coordinates with those of the virtual leader.
The force is given by 
\begin{equation}
	\label{model:sp-force}
	\vu_{i} = \vu^{P}(\vq_{il}) + \vu^{V}(\vv_{il}).
\end{equation}
The first term in \eqref{model:sp-force} penalizes for an agent's position deviation from the position of the virtual leader, and is defined by
\begin{equation}
	\label{model:sp-force-pos}
	\vu^{P}(\vq_{il}) =  - h \left( \abs{\vq_{il}} \right) \vq_{il}.
\end{equation}
We refer to \eqref{model:sp-force-pos} as the \textit{position alignment force}.
The second term in \eqref{model:sp-force} penalizes for an agent's velocity deviation from that of the virtual leader, and is defined by
\begin{equation}
	\label{model:sp-force-vel}
	\vu^{V}(\vv_{il}) =  - g \left( \abs{\vv_{il}} \right) \vv_{il}.
\end{equation}
\eqref{model:sp-force-vel} is referred to as the \textit{velocity alignment force}.
The functions $h, g$, referred to as \textit{generating functions} of the corresponding forces, are given by
\begin{equation}
	\label{model:eq:h-s-def}
	h(s) =
	\begin{cases}
		0, & 0 \le s \le r_{0}, \\
		 \alpha \frac{k(s)}{s}, & s > r_{0},
	\end{cases},
	\quad 
	g(s) =
	\begin{cases}
		0, & 0 \le s \le v_{0}, \\
		\beta \frac{p(s)}{s}, & s > v_{0},
	\end{cases}
\end{equation}
where $\alpha, r_{0}, \beta, v_{0} \ge 0$ are tunable control parameters, and $k : (r_{0}, \infty) \to \R_{> 0}$ and $p : (v_{0}, \infty) \to \R_{> 0}$ are increasing functions such that $h$ and $g$ are $C^{1}$.
\begin{remark}
	\normalfont
	Adjusting $\alpha$ and $\beta$ in \eqref{model:eq:h-s-def} amplifies the effects of the position alignment and the velocity alignment forces, respectively.
	The parameters $r_{0}$ and $v_{0}$ allow for ``relaxing'' the control by turning the propulsion off when the deviations from the target trajectory are small.
	In particular, when $r_{0} > 0$, the agents are forced to stay within the ball of radius $r_{0}$ centered at $\vq_{l}(t)$.
	If the group is sufficiently large and $r_{0}$ is small enough for the ambient dissipative force to act on the agents, its damping effect can be leveraged to align agents' velocities.
	In this way, the agents' on-board energy consumption can be reduced.
	A detailed discussion of the effect of parameters $r_{0}$ and $v_{0}$ on the agents' dynamics is presented in Sections \ref{section:comp} and \ref{section:opt}.
\end{remark}

The position alignment force is conservative:
\begin{equation*}
	- h \left( \abs{\vq_{il}} \right) \vq_{il} = - \nabla_{\vq_{i}} \Phi(\vQ, \vq_{l}),	
\end{equation*}
where $\Phi$ is a virtual attractive potential.
In addition, it follows from the definition of $h(s)$ that $\Phi$ is at least $C^{1}$ and satisfies
\begin{equation}
	\label{model:eq:ubounded-Phi}
	\lim_{\abs{\vq_{il}} \to \infty} \Phi(\vQ, \vq_{l}) = \infty,
	\;\;
	i = 1, \ldots, N,
\end{equation}
whenever $\alpha > 0$.

\begin{remark}
	\normalfont
	The model presented above is different from the O-S model in the following two aspects. 
	First, our navigational feedback forces are, in general, nonlinear, encompassing the linear forces of the O-S model as a special case when $k(s) = s$, $p(s) = s$, $r_{0} = v_{0} = 0$ in \eqref{model:eq:h-s-def}.
	Second, while our conservative force is purely repulsive, the conservative force in the O-S model is repulsive only at small distances and becomes attractive at larger ones.
	Although this distinction does not technically allow us to claim that our model is more general than the O-S model, these characteristics of the conservative force are not relevant for establishing the existence of flocking in either \cite{olfati2006flocking} or our further discussion.
	The fact that the short-range conservative force is both repulsive and attractive is used in \cite{olfati2006flocking} to ensure a certain spatial structure of the flock ($\alpha$-lattice).
	However, the overall cohesion of the group is rather guaranteed by the position-dependent navigational feedback force.
\end{remark}

\subsection{Proximity graph and vector form of the dynamics}
Define the \textit{proximity graph} of the group as an undirected weighted graph $\mathcal{G}(\v{Q}) = \left( \mathcal{A}, \mathcal{E}(\vQ), \sigma \right)$, with $\mathcal{A} = \{1 , \ldots , N\}$, $\mathcal{E}(\vQ) = \{ (i, j) \in \mathcal{A} \times \mathcal{A} : \abs{\vq_{ij}} \le r_{D} \}$, and $\sigma(i, j) = B w_{D} (\abs{\vq_{ij}})$.
Then the sum of all ambient dissipative forces acting on the $i$-the agent can be written as $\sum_{j \ne i} \vf_{ij}^{D} = d_{i} \vv_{i} - \sum_{j \ne i} a_{ij} \vv_{j}$, where $d_{i}$ is the degree of the $i$-th vertex of $\mathcal{G}$ and $a_{ij}$ is the $(i, j)$-th element of its adjacency matrix.
Letting $\mathcal{L}(\vQ)$ denote the Laplacian of $\mathcal{G}(\vQ)$ and $L (\vQ) = \mathcal{L} (\vQ) \otimes I_{d}$, where $I_{d}$ is the $d \times d$ identity matrix, we can write the governing equations (\ref{model:dynamics}) in the following vector form:
\begin{equation}
	\begin{aligned}
		\dot{\vQ} & = \vV,
		\\
		M \dot{\vV} & = - \nabla_{\vQ} U(\vQ) - L(\vQ) \vV - \nabla_{\vQ} \Phi(\vQ, \vq_{l}) - G(\vV - \vones_{N} \otimes \vv_{l}) (\vV - \vones_{N} \otimes \vv_{l}),
	\end{aligned}
	\label{model:dynamics-vec}
\end{equation}
where $\vV = (\vv_{1}, \ldots \vv_{N})$, $G(\vV - \vones_{N} \otimes \vv_{l}) = \diag \left(g(\abs{\vv_{1l}}), \ldots,  g(\abs{\vv_{Nl}}) \right) \otimes I_{d}$ and $\v{1}_{N}$ is an $N$-dimensional vector of all ones.
Note that the right-hand side of (\ref{model:dynamics-vec}) depends on $\vq_{l}(t)$ and $ \vv_{l}(t)$, so, in general, the system (\ref{model:dynamics-vec}) is non-autonomous.

\section{Dissipative dynamics}
\label{section:diss}

In this section, our goal is to describe the asymptotic behavior of the system for the case when the acceleration $\vf^{L}$ of the virtual leader is identically zero, so that the navigational objective becomes a uniform motion.
Such target trajectories are among the simplest ones, yet they are typical for many scenarios.
Furthermore, a wide range of more complex trajectories can be represented by a sequence of such simple trajectories.
We are interested in determining whether the system exhibits flocking which we formally define as follows.
\begin{definition}
	\label{diss:def:flocking}
	\begin{enumerate}[(a)]
		\item The group of agents is said to exhibit \textbf{(approximate) flocking} if there exist $R, V > 0$ and $T \ge 0$ such that
		\begin{subequations}
			\begin{align}
				\label{diss:eq:flocking-def-appr-pos}
				& \abs{\vq_{i}(t) - \vq_{l}(t)} \le R,
				\\
				\label{diss:eq:flocking-def-appr-vel}
				& \abs{\vv_{i}(t) - \vv_{l}(t)} \le V, 
			\end{align}
		\end{subequations}
		for $i, j = 1, \ldots, N$ and all $t \ge T$.
		
		\item If the group exhibits flocking and
		\begin{equation}
			\label{diss:eq:flocking-def-exact-vel}
			\abs{\vv_{i}(t) - \vv_{j}(t)} \to 0, 
		\end{equation}
		for $i, j = 1, \ldots, N$, we say that the flocking is \textbf{exact}.
		
		\item If the group exhibits exact flocking and
		\begin{equation}
			\label{diss:eq:flocking-def-prop-vel}
			\abs{\vv_{i}(t) - \vv_{l}(t)} \to 0, 
		\end{equation}
		for $i = 1, \ldots, N$, we say that the exact flocking is \textbf{proper}.
	\end{enumerate}
\end{definition}
Note that part (b) of the above definition requires the agents to reach asymptotic velocity alignment, allowing the consensus value to be distinct from the velocity of the virtual leader. 
Meanwhile, in part (c), the agents must asymptotically match their velocities with that of the virtual leader, and hence part (c) defines the most desirable scenario.

In the forthcoming theoretical analysis, we assume that the control parameters $\alpha, \beta > 0$, so that both the position alignment and the velocity alignment forces are present.
If $\vf^{L}(t) \equiv \vzero$, the system becomes purely dissipative, and the standard Lyapunov-like argument, such as the one used in  \cite{olfati2006flocking}, can be applied to describe its asymptotic behavior.

For further development, it is convenient to introduce a coordinate system centered at the virtual leader's coordinates (moving frame coordinates).
Let 
\begin{equation*}
	\vx_{i} = \vq_{i} - \vq_{l},
	\;\;
	\vw_{i} = \vv_{i} - \vv_{l},
	\;\;
	i = 1, \ldots N,
\end{equation*}
and $\vX = (\vx_{1}, \ldots \vx_{N})$, $\vW = (\vw_{1}, \ldots \vw_{N})$.
Since $\vx_{i} - \vx_{j} = \vq_{i} - \vq_{j}$ and $\vw_{i} - \vw_{j} = \vv_{i} - \vv_{j}$ for $i, j = 1, \ldots, N$, the governing equations can be written as 
\begin{equation}
	\label{diss:dynamics}
	\begin{aligned}
		\dot{\vX} & = \vW,
		\\
		M \dot{\vW} & = - \nabla_{\vX} U(\vX) - L(\vX) \vW - \nabla_{\vX} \Phi(\vX) - G(\vW) \vW - M \vf^{L} \otimes \vones_{N}.
	\end{aligned}
\end{equation}
In the above coordinate system, the group will exhibit proper exact flocking if $\abs{\vW(t)} \to \vzero$ as $t \to \infty$ and $\abs{\v{X}(t)} \le R$, $t \ge T$, for some $R, T \ge 0$. 
If $\abs{\vW(t)}$ merely stays bounded for $t \ge T$, then the flocking would only be approximate.

Let 
\begin{equation}
	\label{diss:eq:energy-def}
	E (\vX, \vW) = \frac{1}{2} M \abs{\vW}^{2} + U(\vX)+ \Phi(\v{X})
\end{equation}
be the total mechanical energy of (\ref{diss:dynamics}). 
Then
\begin{equation}
	\label{diss:energy-evol}
	\frac{d}{dt} E (\vX(t), \vW(t))
	= - \vW^{T} \left[ L(\vX) +  G(\vW) \right] \vW - M \vf^{L} \otimes \vones_{N} \cdot \vW.
\end{equation}
If the right-hand side of \eqref{diss:energy-evol} is non-positive, the dynamics of (\ref{diss:dynamics}) is dissipative.
In particular, this will be the case if $\vf^{L}(t) \equiv \vzero$ since both $G(\vW)$ and $L(\vX)$ are positive semi-definite (see, e.g., \cite{godsil2001algebraic}).
\begin{remark}
	\normalfont
	Clearly, the system would also be dissipative if either $\vf^{L} \otimes \vones_{N} \cdot \vW > 0$ for all $ t \ge 0$, or if
	\begin{equation}
		\label{diss:eq:other-dissipation}
		\vf^{L} \otimes \vones_{N} \cdot \vW < 0
		\;\;
		\text{and}
		\;\;
		M \abs{\vf^{L} \otimes \vones_{N} \cdot \vW} \le \vW^{T} \left[  L(\vX) + G(\vW) \right] \vW,
	\end{equation}
	for all $t \ge 0$.
	However, it is unclear how to rigorously and explicitly characterize the target trajectories that satisfy either of the conditions.
	To better understand the scenario when $\vf^{L} \otimes \vones_{N} \cdot \vW > 0$, observe that $\vf^{L} \otimes \vones_{N} \cdot \vW = N  \vf^{L} \cdot \bar{\vw}$, where $\bar{ \vw} = \frac{1}{N} \sum_{i} (\vv_{i} - \vv_{l})$ is the average deviation of the agents' velocities from that of the virtual leader. 
	This implies that $\vf^{L} \otimes \vones_{N} \cdot \vW$ is positive if the angle between $\vf^{L}$ and $\bar{\vw}$ is less than $\pi / 2$.
	Informally speaking, the condition will hold when the virtual leader does not steer the group significantly away from the direction it is currently heading, or, in other words, when the target trajectory is sufficiently smooth. 
	However, it is unclear how to quantify such a requirement.
	For \eqref{diss:eq:other-dissipation} to hold, the dissipation of energy resulting from the action of the ambient force $\vf^{D}$ should, in a way, dominate the influx of energy in the system generated by the acceleration of the virtual leader.
	In other words, the trajectory is allowed to be more rough than it should be for $\vf^{L} \otimes \vones_{N} \cdot \vW > 0$ to hold, however, its roughness should be compensated by the damping effect of the dissipative forces.
	Still, it is unclear what exact conditions $\vf^{L}$ should satisfy for this to be true.
\end{remark}
\begin{remark}
	\normalfont
	Using the above considerations, one can describe a specific scenario in which the total mechanical energy increases, causing the dynamics to no longer be dissipative. 
	Namely, consider a group of agents moving with identical velocities $\vw_{i} = \vw$, $i = 1, \ldots, N$, for some $\vw \in \R^{d}$.
	Suppose that the agents must be stopped at time $t_{0} \ge 0$ by applying $\vf^{L} (t) = - \lambda \vw (t)$, where $t \ge t_{0}$ and $\lambda > 0$. 
	Then
	\begin{equation*}
		\frac{d}{dt} E (\vX(t), \vW(t)) = \left[- g(\abs{\vw}) + M \lambda \right] N \abs{\vw}^{2} > 0
 	\end{equation*}
 	for $t \ge t_{0}$ if $\lambda$ is sufficiently large. 
\end{remark}
\begin{remark}
	\normalfont
	In \cite{olfati2006flocking}, the origin of the moving frame coordinate system is chosen to be the center of mass $\left( \bar{\vq}, \bar{\vv} \right) = \left( \frac{1}{N} \sum_{i = 1}^{N} \vq_{i},  \frac{1}{N} \sum_{i = 1}^{N} \vv_{i}  \right)$ of the group. 
	If the generating functions $g(s)$ and $h(s)$ of the self-propulsion forces are constant (so that the forces become linear), in such a coordinate system, the structural dynamics of the group can be decoupled from that of the virtual leader (\cite{olfati2006flocking}, Lemma 2). 
	As a result, (\ref{diss:dynamics}) becomes autonomous.
	Furthermore, the first term in the right-hand side of (\ref{diss:energy-evol}) disappears, so the system is dissipative even when $\vf^{L}(t) \not\equiv \vzero$.
	However, since we do not assume that the self-propulsion forces are linear, such an approach is not useful.
	Furthermore, analyzing the structural dynamics of the group relative to the center of mass leaves behind the question of how the center of mass moves relative to the target trajectory (see, e.g., \cite{su2009flocking}).
\end{remark}

For now, suppose that $\vf^{L}(t) \equiv \vzero$.
In this case, if either $L(\vX)$ or $G(\vW)$ is positive definite for all $t \ge 0$, the system will dissipate energy whenever $\vW \ne \vzero$.
In particular, this would be true if the activation threshold $v_{0} = 0$. 
In this case, the generating function $g(s)$ of the velocity alignment force does not vanish anywhere except zero, and hence $G(\vW)$ becomes positive definite.
We refer to such a scenario as \textit{strictly dissipative}.
In turn, if $v_{0} > 0$, the diagonal elements of $G(\vW)$ can simultaneously vanish for some $\vW \ne \vzero$.
Such a scenario is referred to as \textit{non-strictly dissipative}.

For both of the above scenarios, $E (\vX, \vW)$ becomes a weak Lyapunov function for the autonomous system (\ref{diss:dynamics}).
Fix some initial conditions $(\vX(0), \vW(0)) = (\vX_{0}, \vW_{0})$, and let
\begin{equation*}
	\mathcal{U} = \{(\v{X}, \v{W}) \in \R^{2Nd} : E (\vX, \vW) \le  E (\vX_{0}, \vW_{0}) \},
\end{equation*}
Note that, for any $(\v{X}, \v{W}) \in \mathcal{U}$, we have 
\begin{equation*}
	\abs{\vW}^{2} \le \frac{2}{M} E(X, W)  \le \frac{2}{M}  E (\vX_{0}, \vW_{0}),
	\quad
	\Phi(\vX) \le E(X, W) \le  E (\vX_{0}, \vW_{0}).
\end{equation*}
and therefore $\mathcal{U}$ is bounded, due to \eqref{model:eq:ubounded-Phi}.
Thus, $\mathcal{U}$ is a compact set that is also positively invariant under the flow generated by (\ref{diss:dynamics}).
By LaSalle's invariance principle \cite{lasalle1960some}, every solution starting in $\mathcal{U}$ converges to the largest positively invariant subset of the set 
\begin{equation*}
	\mathcal{M} = \{ (\v{X}, \v{W}) \in \mathcal{U} : \frac{d}{dt} E \left( \vX(t), \vW(t) \right) = 0\}.
\end{equation*}
Therefore, any solution of (\ref{diss:dynamics}) converges to a positively invariant set $\mathcal{A}$ consisting of orbits satisfying
\begin{equation}
	\label{diss:attractor-eq-vec}
	\vW^{T} \left[ L(\vX) + G(\vW) \right] \vW = 0,
\end{equation}
or, equivalently,
\begin{equation}
	\label{diss:attractor-eq-scal}
	B \sum_{(i, j) \in \mathcal{E}(\vX)} w_{D} (\abs{\vx_{ij}})(\vw_{i} - \vw_{j})^{2} 
	+ \sum_{i = 1}^{N} g(\abs{\vw_{i}}) \abs{\vw_{i}}^{2} = 0,
\end{equation}
where the first sum is the expansion of the quadratic form $\vW^{T} L(\vX) \vW$ (see, e.g., \cite{godsil2001algebraic}).

Let $c(t)$ denote the number of connected components of the proximity graph at time $t \ge 0$, and $N_{i}(t)$, $i = 1, \ldots, c(t)$, be the number of agents in the $i$-th connected component at time $t$.
It follows from \eqref{diss:attractor-eq-scal} that, for any solution $(\vX^{\star}, \vW^{\star})$ satisfying $(\vX^{\star}(t_{0}), \vW^{\star}(t_{0})) \in \mathcal{A}$ for some $t_{0} \ge 0$, we have
\begin{equation}
	\label{diss:attractor-w}
	\vW^{\star}(t) = (\vw^{\star}_{1}(t) \otimes \vones_{N_{1}(t_{0})}, \dots, \vw^{\star}_{c(t_{0})}(t) \otimes \vones_{N_{c(t_{0})}(t_{0})}),
\end{equation}
where $\vw_{1}^{\star}(t), \ldots, \vw_{c(t_{0})}^{\star}(t) \in \R^{d} \times [t_0, \infty)$ are such that $\abs{\vw_{i}^{\star}(t)} \leq v_{0}$ for $i = 1, \ldots, c(t_{0})$ and $ t\in [t_{0}, \infty)$.
In \eqref{diss:attractor-w}, we assigned consequent indices to agents contained in a given connected component of the proximity graph, which can be done without loss of generality.
Note that \eqref{diss:attractor-w} simply means that all agents in a given connected component of the proximity graph will have the same velocity for all $t \ge t_{0}$.

Thus, for the strictly dissipative case, we arrive at the following result.
\begin{theorem}
	\label{diss:thm:flocking}
	Suppose that $\vf^{L}(t) \equiv \vzero$. Then the group of agents exhibits proper exact flocking whenever $v_{0} = 0$.
\end{theorem}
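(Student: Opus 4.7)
The plan is to leverage the LaSalle machinery already established in the excerpt: every trajectory of \eqref{diss:dynamics} with $\vf^{L}\equiv\vzero$ enters the compact positively invariant sublevel set $\mathcal{U}$ and converges to the largest invariant subset $\mathcal{A}$ of $\mathcal{M}$, which is characterized by \eqref{diss:attractor-eq-scal}. Only two observations need to be added on top of this.

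First I would verify the velocity part of the conclusion. When $v_{0}=0$, the piecewise definition of $g$ in \eqref{model:eq:h-s-def} degenerates to $g(s)=\beta\,p(s)/s$ on $(0,\infty)$, and since $p$ is a strictly positive increasing function, $g(s)>0$ for every $s>0$. Consequently the second sum in \eqref{diss:attractor-eq-scal} is a sum of nonnegative terms that vanishes only when $\vw_{i}=\vzero$ for every $i$; the first sum, being nonnegative as well, forces no further constraint. Thus any $(\vX,\vW)\in\mathcal{A}$ has $\vW=\vzero$, and convergence to $\mathcal{A}$ together with the boundedness of $\mathcal{U}$ yields $\vW(t)\to\vzero$ as $t\to\infty$. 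This immediately gives \eqref{diss:eq:flocking-def-exact-vel} and \eqref{diss:eq:flocking-def-prop-vel} of Definition \ref{diss:def:flocking}, so exactness and properness of the flocking follow.

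Next I would establish the spatial bound \eqref{diss:eq:flocking-def-appr-pos}. Since the trajectory $(\vX(t),\vW(t))$ remains in $\mathcal{U}$ for all $t\ge 0$, we have $\Phi(\vX(t))\le E(\vX_{0},\vW_{0})$ uniformly in $t$. Because $\alpha>0$ by assumption in this section, the coercivity \eqref{model:eq:ubounded-Phi} of $\Phi$ applies and rules out $|\vx_{i}(t)|\to\infty$ along any subsequence; concretely, the sublevel set $\{\vX:\Phi(\vX)\le E(\vX_{0},\vW_{0})\}$ is bounded, so there exists $R>0$ (depending only on the initial energy) with $|\vq_{i}(t)-\vq_{l}(t)|=|\vx_{i}(t)|\le R$ for all $t\ge 0$. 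The velocity bound \eqref{diss:eq:flocking-def-appr-vel} is automatic since $|\vW(t)|\to 0$ implies $|\vW(t)|\le V$ for all $t\ge T$ with any $V>0$ and a suitable $T$. This completes the verification of all three parts of Definition \ref{diss:def:flocking}.

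I do not anticipate a genuine obstacle, since the heavy lifting---construction of the Lyapunov function $E$, boundedness and positive invariance of $\mathcal{U}$, and invocation of LaSalle---has already been carried out in the paragraphs preceding the theorem. The only delicate point is making sure that the definition of $g$ at $s=0$ (where $g$ is set to $0$ by \eqref{model:eq:h-s-def}) does not introduce spurious non-equilibrium solutions: this is harmless because the diagonal block $g(|\vw_{i}|)|\vw_{i}|^{2}$ in \eqref{diss:attractor-eq-scal} is trivially zero whenever $\vw_{i}=\vzero$, so setting $g(0)=0$ is consistent with the conclusion $\vW=\vzero$ on $\mathcal{A}$ rather than opposing it.
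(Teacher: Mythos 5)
Your argument is correct and follows essentially the same route as the paper's proof: the spatial bound comes from the boundedness of the sublevel set $\mathcal{U}$ (via the coercivity of $\Phi$), and the velocity conclusion comes from observing that $v_{0}=0$ makes $g$ strictly positive away from the origin, forcing $\vW=\vzero$ on the LaSalle limit set $\mathcal{A}$. Your write-up merely spells out in more detail the steps the paper compresses into two sentences.
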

\begin{proof}
	\eqref{diss:eq:flocking-def-appr-pos} in Definition \ref{diss:def:flocking} follows from the fact that $\mathcal{U}$ is bounded for any given initial conditions.
	In turn, the assumption regarding the control parameter $v_{0}$ implies that $\vW^{\star}(t) \equiv \vzero$, and \eqref{diss:eq:flocking-def-prop-vel} in Definition \ref{diss:def:flocking} follows.
\end{proof}

It is clear that, in the strictly dissipative case, every element of $\mathcal{A}$ is an equilibrium solution. 
Furthermore, any equilibrium solution of \eqref{diss:dynamics} is a positively invariant set satisfying \eqref{diss:attractor-eq-vec}, and thus should be contained in $\mathcal{A}$.
Hence, in the strictly dissipative case, $\mathcal{A}$ is precisely the set of equilibrium solutions of \eqref{diss:dynamics}, and any solution will converge to a flock whose spatial configuration remains \textit{static} relative to the position of the virtual leader.
As it is shown in the next section, this might not be the case for the non-strictly dissipative scenario.

\section{Wobblers}
\label{section:wob}

In this section, our goal is to determine if, in the non-strictly dissipative case, the attractor $\mathcal{A}$ may contain non-equilibrium solutions.
We will refer to such non-equilibrium solutions as \textit{wobblers}. 
First, we obtain sufficient conditions that can guarantee the non-existence of wobblers.
If these conditions holds, the qualitative behavior of the system is identical to that in the strictly dissipative case, and the group exhibits proper exact flocking.
Next, we prove the existence of wobblers for certain choices of the self-propulsion forces by constructing them explicitly.
In this way, we demonstrate that, for the case of uniform motion, the asymptotic flocks may not be static relative to the position of the virtual leader, and the exact flocking may not be proper.

Suppose that $(\vX^{\star}, \vW^{\star})$ is such that $(\vX^{\star}(t_{0}), \vW^{\star}(t_{0})) \in \mathcal{A}$ for some $t_{0} \ge 0$.
For simplicity, consider the case when the proximity graph is connected at $t_{0}$, so that, for all $t \ge t_{0}$,
\begin{equation}
	\label{wob:eq:w}
	\vW(t) = (\vw^{\star}(t), \vw^{\star}(t), \dots, \vw^{\star}(t))
\end{equation}
for some $\vw^{\star}(t) \in \R^{d} \times [t_0, \infty)$ satisfying 
\begin{equation}
	\label{wob:eq:w-bounded}
	\abs{\vw^{\star}(t)} \leq v_{0}.
\end{equation}
If $c(t_{0}) > 1$, the forthcoming analysis can be applied to each of the individual connected components of the proximity graph.

The trajectory $(\vX^{\star}, \vW^{\star})$ satisfying \eqref{wob:eq:w} solves
\begin{eqnarray}
	\dot{\vX}^{\star} & = & \vW^{\star},
	\nonumber
	\\
	\dot{\vw}^{\star} & = &\vf^C_{i} + \vu(\vx_{i}^{\star}), 
	\;\;
	i = 1, \ldots, N,
	\label{wob:eq:wobbler}
\end{eqnarray}
where $\vu(\vx_{i}^{\star}) = - h(\vx_{i}^{\star}) \vx_{i}^{\star}$.
Let $\vx_{i}^{0} = \vx_{i}(t_{0})$, $i = 1, \ldots, N$, and write
\begin{equation}
	\label{wob:eq:x-shift}
	\vx_{i}^{\star}(t)=\vx_{i}^{0}+\vx^{\star}(t),
\end{equation}
where
\begin{equation*}
	\vx^{\star}(t)=\int_{0}^{t} \vw^{\star}(\tau) d\tau.
\end{equation*}
Since conservative forces $\vf_{i}^C$ are functions of relative positions, \eqref{wob:eq:x-shift} implies that the differences
\begin{equation*}
	\vu(\vx_{i}^{\star})-\vu(\vx_{j}^{\star})
\end{equation*}
are independent of $t$. Therefore,
\begin{equation}
	\label{wob:eq:rigidity}
	\vu(\vx_{i}^{0}+\vx^{\star}(t))-\vu(\vx_{j}^{0}+\vx^{\star}(t))=\vu(\vx_{i}^{0})-\vu(\vx_{j}^{0})
\end{equation}
for all $t\in [0, \infty)$. 
Equation \eqref{wob:eq:rigidity} implies the existence of $\vd(t): [0, \infty) \to \R^{d}$ such that
\begin{equation}
	\label{wob:eq:u-shift}
	\vu(\vx_{i}^{0}+\vx^{\star}(t))=\vd(t)+\vu(\vx_{i}^{0})
\end{equation}
for $i=1, \ldots, N$.

It is natural to require that no agents can have the same initial position:
\begin{equation}
	\label{wob:eq:distinct}
	\vx_{i}^{0}\ne \vx_{j}^{0}\;\;{\rm if}\;i\ne j
\end{equation}
for $i, j = 1, \ldots, N$.
Also, due to \eqref{model:eq:ubounded-Phi}, any orbit in $\mathcal{A}$ should satisfy
\begin{equation}
	\label{wob:eq:x-bounded}
	|\vx_{i}^{0}+\vx^{\star}(t)| \leq R
\end{equation}
for $i=1, \ldots, N$ and all $t\in[t_{0}, \infty)$, where $R > 0$ is a constant.
Finally, we assume that $\vw^{\star}$ does not vanish on intervals:
\begin{equation}
	\label{wob:eq:dnvi}
	\forall
	t_1, t_2 \in {\mathbb R},   \;\; t_0 \le t_1<t_2 \Rightarrow  \vw^{\star}(t)  \not\equiv 0 \;{\rm on}\;(t_1, t_2).
\end{equation}
If there exist $t_1, t_2 \in \R,  t_0 \le t_1 < t_2$, such that $\vw^{\star}(t) \equiv 0$ on $(t_1, t_2)$, it follows that $\vw^{\star}(t) \equiv 0$ on $(t_{1}, \infty)$.
This makes $(\vX^{\star}, \vW^{\star})$ an equilibrium solution, which is not of interest in this section.
\begin{definition}
	A wobbler is a solution of \eqref{wob:eq:wobbler} satisfying \eqref{wob:eq:w-bounded}, \eqref{wob:eq:distinct}, \eqref{wob:eq:x-bounded}, and \eqref{wob:eq:dnvi}.
\end{definition}

\subsection{Non-linear case: non-existence}

First, suppose that the generating function $h(s)$ is nonlinear, i.e., either that $r_{0} > 0$ or that $k(s)$ is not a multiple of $s$.
In this case, we can eliminate the existence of wobblers for a certain class of functions $h(s)$ when the group consists of \textit{more than two agents}.
To do so, we need a few auxiliary results.
\begin{lemma}
	\label{wob:lemma:dense}
	Let $f : \R \to \R_{\ge 0}$ be continuous and $S = \{ x \in \R \mid f(x) = 0\}$.
	For any open $I \subseteq \R$, if $I \cap S$ is dense in $I$, then $f(x) = 0$ on $I$.
\end{lemma}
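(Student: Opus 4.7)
The statement is a standard consequence of continuity combined with density, so the plan is short. I would argue directly by showing that every point of $I$ is approached by a sequence of zeros of $f$, and invoke continuity.

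More concretely: fix an arbitrary $x \in I$. Since $I$ is open, every open neighborhood of $x$ contained in $I$ is itself a nonempty open subset of $I$. By the hypothesis that $I \cap S$ is dense in $I$, each such neighborhood must meet $S$. Choosing, for each positive integer $n$, an open interval $(x - 1/n, x + 1/n) \cap I$ around $x$ (which lies in $I$ for $n$ large enough since $I$ is open), pick a point $x_n$ of $I \cap S$ inside it. This yields a sequence $\{x_n\} \subseteq S$ with $x_n \to x$.

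Now each $x_n \in S$ means $f(x_n) = 0$. By continuity of $f$ at $x$,
\begin{equation*}
f(x) = \lim_{n \to \infty} f(x_n) = 0.
\end{equation*}
Since $x \in I$ was arbitrary, $f \equiv 0$ on $I$, as claimed. (An equivalent phrasing via contradiction: if $f(x_0) > 0$ for some $x_0 \in I$, continuity yields an open neighborhood $U \subseteq I$ of $x_0$ on which $f > 0$; then $U$ is a nonempty open subset of $I$ disjoint from $S$, violating density.)

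There is no real obstacle here; the only thing to take care of is ensuring that the neighborhoods used when extracting the approximating sequence lie inside $I$, which is handled because $I$ is open. Nonnegativity of $f$ plays no role in this lemma, though presumably it matters for the way the lemma is applied later (e.g.\ to a nonnegative integrand like a term of \eqref{diss:attractor-eq-scal}).
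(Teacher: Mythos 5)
Your proof is correct and is essentially the paper's argument: the paper proves the lemma by the contradiction route you mention parenthetically (assuming $f(x_0)=\gamma>0$ and using continuity to produce a neighborhood of $x_0$ in $I$ disjoint from $S$, contradicting density), which is trivially equivalent to your direct sequence argument. Your side remark that nonnegativity of $f$ is not needed is also accurate.
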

\begin{proof}
	See Appendix \ref{appendix:dense_lemma_proof}.
\end{proof}
Note that it follows from \eqref{model:eq:h-s-def} that $\vu(x)$ is one-to-one outside of the ball $B(\vzero, r_{0})$ since $k(s)$ is strictly increasing.
\begin{prop}
	\label{wob:prop:wobblers}
	Suppose that $(\vX^{\star}, \vW^{\star})$ is a wobbler.
	Then there exists an open interval $I \subseteq (t_{0}, \infty)$ such that 
	\begin{enumerate}[(i)]
		\item $\abs{\vx_{i}^{\star}(t)} > r_{0}$ on $I$ for $i = 1, \ldots, N$;  
		\item $\vd(t)$ cannot be constant on any open subinterval of $I$.
	\end{enumerate}
\end{prop}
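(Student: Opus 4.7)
The plan is to prove (i) by contradiction and then derive (ii) as a short consequence using the injectivity of $\vu$ on $\{|\vx|>r_0\}$. A preliminary observation simplifies everything: in a wobbler all agents share the common velocity $\vw^\star$, so the relative displacements $\vx_i^\star-\vx_j^\star=\vx_i^0-\vx_j^0$ are frozen in time and each ambient conservative force $\vf_i^C$ is therefore a fixed vector, which I will denote $\vF_i$. Newton's third law gives $\sum_i\vF_i=\vzero$, and \eqref{wob:eq:wobbler} collapses to $\dot{\vw}^\star=\vF_i+\vu(\vx_i^\star)$, forcing $\vF_i+\vu(\vx_i^\star(t))$ to be independent of $i$ at every $t$.

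For (i), I will assume no such $I$ exists. Then the closed sets $S_i=\{t\in(t_0,\infty):|\vx_i^\star(t)|\le r_0\}$ together cover $(t_0,\infty)$, and a finite union of closed sets can exhaust an open interval only if at least one, say $S_{i_0}$, contains an open subinterval $J$. On $J$ we have $\vu(\vx_{i_0}^\star)\equiv\vzero$, giving $\dot{\vw}^\star\equiv\vF_{i_0}$ and, for every other $j$, $\vu(\vx_j^\star)\equiv\vF_{i_0}-\vF_j$. Since $\vu^{-1}(\vzero)=\overline{B(\vzero,r_0)}$ and $\vu$ is one-to-one outside this ball, for each $j$ either $\vx_j^\star$ is constant on $J$ (hence $\vx^\star$ is constant and $\vw^\star\equiv\vzero$ on $J$, contradicting \eqref{wob:eq:dnvi}) or $|\vx_j^\star|\le r_0$ throughout $J$. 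Thus every agent lies in the off-zone on $J$, whence $\vF_j=\vF_{i_0}$ for all $j$; together with $\sum_i\vF_i=\vzero$ this forces $\vF_i=\vzero$ for every $i$, and so $\vw^\star$ is constant on $J$.

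The hard remaining case is $\vw^\star\equiv\vc$ on $J$ with $\vc\ne\vzero$. I will extend $J$ to the maximal open interval $J^*=(a^*,b^*)$ on which every agent stays in $\overline{B(\vzero,r_0)}$; the same reasoning yields $\vw^\star\equiv\vc$ on $J^*$, and rigid linear motion with nonzero velocity cannot remain in a bounded set forever, so $b^*<\infty$. By maximality, in every right-neighborhood of $b^*$ some agent leaves the off-zone, so I can pick $t_1>b^*$ and an index $k$ with $|\vx_k^\star(t_1)|>r_0$, hence $\vu(\vx_k^\star(t_1))\ne\vzero$. Since all $\vF_i=\vzero$, the consistency constraint reduces to $\vu(\vx_j^\star(t_1))=\vu(\vx_k^\star(t_1))$ for every $j$, so every agent must be outside $\overline{B(\vzero,r_0)}$ at $t_1$, and the injectivity of $\vu$ there forces $\vx_j^\star(t_1)=\vx_k^\star(t_1)$ for all $j$, contradicting \eqref{wob:eq:distinct}. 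This completes (i).

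Claim (ii) will follow quickly. If $\vd\equiv\vp$ on an open subinterval $J'\subseteq I$, then \eqref{wob:eq:u-shift} gives $\vu(\vx_i^\star(t))=\vp+\vu(\vx_i^0)$, constant in $t$ on $J'$, for every $i$. Because $|\vx_i^\star|>r_0$ on $I\supseteq J'$, injectivity of $\vu$ there forces each $\vx_i^\star$ constant on $J'$, so $\vx^\star$ is constant and $\vw^\star\equiv\vzero$ on $J'$, once again contradicting \eqref{wob:eq:dnvi}. I expect the main obstacle to be the degenerate subcase of (i) where all pairwise conservative forces happen to cancel ($\vF_i=\vzero$) and the whole flock drifts rigidly inside the off-zone; the key is that even a single time $t_1>b^*$ at which an agent exits the off-zone suffices, since the consistency constraint together with injectivity of $\vu$ and the distinct-initial-positions hypothesis \eqref{wob:eq:distinct} gives an instantaneous contradiction.
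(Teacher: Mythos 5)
Your proof is correct, and its skeleton matches the paper's: failure of (i) forces all agents into the closed ball $\overline{B(\vzero,r_{0})}$ on some interval; there the motion is a rigid drift with constant nonzero velocity (all $\vu(\vx_{i}^{\star})$ vanish and the pairwise conservative forces sum to zero), which must exit the ball in finite time; a contradiction is then extracted just after exit; and (ii) follows from injectivity of $\vu$ outside the ball exactly as in the paper. Where you genuinely diverge is in how the confinement interval is produced and how agents sitting exactly on the sphere $\abs{\vx}=r_{0}$ are handled. The paper fixes a time $\tau$, partitions indices into those strictly inside, on, and strictly outside the sphere, and needs a separate density argument (Lemma \ref{wob:lemma:dense}) to dispose of the boundary case. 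You instead observe that failure of (i) makes the closed sets $S_{i}=\{t:\abs{\vx_{i}^{\star}(t)}\le r_{0}\}$ cover $(t_{0},\infty)$, extract by a finite Baire-type argument an interval on which one agent is confined, and then invoke the uniform dichotomy: $\vu(\vx_{j}^{\star})$ constant and nonzero forces $\vx_{j}^{\star}$ to be frozen (contradicting \eqref{wob:eq:dnvi}), while constant and zero forces confinement. This treats interior and boundary of the off-zone identically and renders Lemma \ref{wob:lemma:dense} unnecessary --- a genuine simplification. Two minor remarks: your closing contradiction (all positions coincide at $t_{1}$, violating \eqref{wob:eq:distinct}) implicitly requires $N\ge 2$, whereas at that same point you could simply note that all agents are strictly outside the ball at $t_{1}$, hence on a neighborhood of $t_{1}$ by continuity, contradicting the standing assumption that (i) fails; and both the injectivity of $\vu$ off the ball and the identity $\vu^{-1}(\vzero)=\overline{B(\vzero,r_{0})}$ rely on $\alpha>0$ and $k$ strictly increasing, which the paper assumes throughout this section, so no gap arises there.
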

\begin{proof}
	Define
	\begin{equation*}
		\mathcal{I}_{-}(t) = \{ i \mid \abs{\vx_{i}^{\star}(t)} < r_{0} \},
		\;\;
		\mathcal{I}_{0}(t) = \{ i \mid \abs{\vx_{i}^{\star}(t)} = r_{0} \},
		\;\;
		\mathcal{I}_{+}(t) = \{ i \mid \abs{\vx_{i}^{\star}(t)} > r_{0} \},
	\end{equation*}
	for all $t \ge t_{0}$
	
	Suppose that (i) is false and let $I \subseteq (t_{0}, \infty)$ be open.
	Then there exits $\tau \in I$ such that $\mathcal{I}_{0}(\tau) \cup \mathcal{I}_{-}(\tau)$ is non-empty.
	Suppose that $\mathcal{I}_{+}(\tau)$ is also non-empty.
	By continuity of $\vx^{\star}$, $i \in \mathcal{I}_{-}(\tau) \cup \mathcal{I}_{+}(\tau)$ implies that there exists $\delta_{i} > 0$ such that either $i \in \mathcal{I}_{-}(t)$ for all $t \in (\tau - \delta_{i}, \tau + \delta_{i})$, or $i \in \mathcal{I}_{+}(t)$ for all $t \in (\tau - \delta_{i}, \tau + \delta_{i})$.
	Thus,
	\begin{equation}
		\label{wob:eq:i-constant}
		\mathcal{I}_{-}(t) = \mathcal{I}_{-}(\tau),
		\;\;
		\mathcal{I}_{+}(t) = \mathcal{I}_{+}(\tau)
	\end{equation}
	on $\tilde{I} = (\tau - \delta, \tau + \delta)$, where $\delta = \min_{i \in \mathcal{I}_{-}(\tau) \cup \mathcal{I}_{+}(\tau)} \delta_{i}$.
	Then, for any $i \in \mathcal{I}_{-}(\tau)$ and any $j \in \mathcal{I}_{+}(\tau)$, subtracting the $j$-th equation from the $i$-th equation in \eqref{wob:eq:wobbler} yields
	\begin{equation*}
		\vu \left(\vx_{j}^{\star}(t)\right) = \vf_{i}^{C} - \vf_{j}^{C}
	\end{equation*}
	for all $t \in \tilde{I}$.
	\eqref{wob:eq:x-shift} implies that $\vx_{i}(t) - \vx_{j}(t)= \vx_{i}^{0} - \vx_{j}^{0}$ for all $i, j \in \{1, \ldots, N\}$ and all $t \ge t_{0}$, and thus $\vf_{i}^{C} - \vf_{j}^{C}$ is constant.
	Therefore, $\vu \left(\vx_{j}^{\star}(t)\right)$ is constant on $\tilde{I}$, and so is $\vx_{j}^{\star}(t)$ since $\vu$ is one-to-one.
	Then $\vw^{\star}(t) = 0$ on $\tilde{I}$, which contradicts the definition of a wobbler, and therefore $\mathcal{I}_{-}(\tau)$ must be empty. 
	An analogous argument can be used to show that there is no open interval containing $\tau$ such that $i \in \mathcal{I}_{0}(t)$ on it for some $i \in  \mathcal{I}_{0}(\tau)$, unless $\mathcal{I}_{+}(\tau)$ is empty.
	
	Let $i_{1} \in \mathcal{I}_{0}(\tau)$.
	If the set $S_{1} = \{ t \in \tilde{I} \mid i_{1} \in \mathcal{I}_{0}(t) \} $ is dense in $\tilde{I}$, then, by Lemma \ref{wob:lemma:dense}, $i_{1} \in \mathcal{I}_{0}(t)$ on $\tilde{I}$, which is a contradiction.
	Therefore, there exists an open interval $\tilde{I}_{1} \subseteq I$ such that $i_{1} \in  \mathcal{I}_{0}(t)$ for all $t \in \tilde{I}_{1}$.
	Similarly, if $i_{2} \in \mathcal{I}_{0}(\tau) \setminus \{ i_{1} \}$, then there exists an open interval $\tilde{I}_{2} \subseteq \tilde{I}_{1}$ such that $i_{2} \in  \mathcal{I}_{0}(t)$ for all $t \in \tilde{I}_{2}$.
	Applying the above procedure for all $i \in \mathcal{I}_{0}(\tau) \setminus \{ i_{1}, i_{2} \}$, one gets that there exists an open interval $\tilde{I}_{N} \subseteq \ldots \subseteq \tilde{I}_{1}$ such that $\mathcal{I}_{+}(t) = \{1, \ldots, N\}$ for all $t \in \tilde{I}_{N}$, which is a contradiction.
	Thus, $\mathcal{I}_{+}(\tau)$ is empty unless $\mathcal{I}_{0}(\tau) \cup \mathcal{I}_{-}(\tau)$ is empty.
	In fact, since $I$ and $\tau$ were chosen arbitrarily, we showed that, for any $t \ge t_{0}$, either $\mathcal{I}_{0}(t) \cup \mathcal{I}_{-}(t)$ or $\mathcal{I}_{+}(t)$ must be empty.
	
	Let $t_{1}, t_{2} \ge t_{0}$ be such that $[t_{1}, t_{2}]$ is the maximal interval containing $\tau$ on which $\mathcal{I}_{+}(t)$ is empty.
	Clearly, $t_{1} < t_{2}$.
	Adding $N$ equations in \eqref{wob:eq:wobbler}, due to pairwise symmetry of the ambient conservative force, we get that $\dot{\vw}^{\star}(t) = 0$ on $[t_{1}, t_{2}]$. 
	Hence, 
	\begin{equation}
		\label{wob:eq:constant-vel}
		\vx_{i}^{\star}(t) = \vx_{i}^{\star}(t_{1}) + \v{C}(t - t_{1}), 
		\;\;
		i = 1, \ldots, N,
	\end{equation}
	for all $t \in [t_{1}, t_{2}]$ and some constant $\v{C} \in \R^{d}$, $\v{C} \ne \vzero$.
	Since $\vx_{i}^{\star}(t)$ defined by \eqref{wob:eq:constant-vel} is unbounded as $t \to \infty$, $t_{2}$ must be finite.
	Therefore, there exists $t_{3} > t_{2}$ such that $\mathcal{I}_{+}(t_{3})$ is non-empty.
	Thus, (i) has to be true.
	
	Now, suppose that (ii) is false and let $I \subseteq (t_{0}, \infty)$ be open and such that (i) holds on it, but $\vd(t) = \vd_{0}$, for all $t \in I$ and some constant $\vd_{0} \in \R^{d}$.
	It follows from \eqref{wob:eq:u-shift} that 
	\begin{equation*}
		\vu\left(\vx_{i}^{\star} (t_{1})\right) = \vu\left(\vx_{i}^{\star} (t_{2})\right)
	\end{equation*}
	for $i = 1, \ldots, N$ and all $t_{1}, t_{2} \in I$.
	Since $\vu$ is one-to-one, $\vx_{i}^{\star}(t)$ is constant on $I$, and hence $\vw^{\star}(t) = 0$ on $I$.
	This contradicts the definition of a wobbler. 
	Thus, (ii) must be true.
\end{proof}

\begin{lemma}
	\label{wob:lemma:two-sols}
	Suppose that $h(s)$ is twice differentiable and that $h(s)$, $sh^{\prime}(s)$, and 
	$
	\frac{s \left(h^{\prime} (s)\right)^2}{2h^{\prime}(s) + \left( s h^{\prime} (s)\right)^{\prime}}	
	$
	are strictly increasing on $(r_{0}, \infty)$.
	Then, for any $a, b \in \R$, the equation 
	\begin{equation}
		\label{wob:sheq}
		\left[h(s) - a\right] \left[h(s) - a + s h^\prime(s)\right] = -b^2
	\end{equation}
	has at most two solutions.
\end{lemma}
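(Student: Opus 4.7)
The plan is to show that the function $F(s) := [h(s) - a]\,[h(s) - a + s h^\prime(s)]$ has at most one critical point on $(r_0, \infty)$; once this is established, continuity forces the level set $\{s : F(s) = -b^2\}$ to have at most two elements.

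First I would differentiate $F$ via the product rule, collect terms, and rearrange to obtain
\begin{equation*}
F'(s) \;=\; h'(s)\bigl[h(s) - a + sh'(s)\bigr] + \bigl[h(s)-a\bigr]\bigl[h'(s) + (sh'(s))'\bigr] \;=\; \bigl[h(s)-a\bigr]\bigl[2h'(s) + (sh'(s))'\bigr] + s\bigl(h'(s)\bigr)^2.
\end{equation*}
The expression in brackets, $D(s) := 2h'(s) + (sh'(s))'$, is by hypothesis nonvanishing on $(r_0, \infty)$ (otherwise the quotient in the hypothesis would fail to be defined), and in fact has constant sign there. Dividing through by $D(s)$, the equation $F'(s) = 0$ is equivalent to
\begin{equation*}
h(s) \;+\; \frac{s\bigl(h'(s)\bigr)^2}{2h'(s) + (sh'(s))'} \;=\; a.
\end{equation*}

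The key step is now immediate: by hypothesis both $h(s)$ and $\dfrac{s(h'(s))^2}{2h'(s) + (sh'(s))'}$ are strictly increasing on $(r_0, \infty)$, so their sum is strictly increasing there. Hence the displayed equation has at most one solution, i.e.\ $F$ has at most one critical point in $(r_0, \infty)$.

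Finally, if $F$ has no critical point, it is strictly monotonic and $F(s) = -b^2$ admits at most one solution. If $F$ has exactly one critical point $s_*$, then $F$ is monotonic on each of $(r_0, s_*]$ and $[s_*, \infty)$, so the level set contains at most one point from each subinterval, giving at most two solutions in total. This is the desired conclusion. The only conceptual point that is not pure calculus is recognizing that the rearrangement of $F' = 0$ exhibits precisely the sum of the two quantities the hypothesis was tailored to control; the remaining obstacle is only bookkeeping about the sign of $D(s)$, which is handled once one observes that $D$ cannot vanish on the domain where the quotient is assumed to exist.
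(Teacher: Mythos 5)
Your proof is correct, and it turns on exactly the same computation as the paper's: rearranging $F'(s)=0$ into an equation whose left side is the sum of $h(s)$ and the quotient $\frac{s(h'(s))^2}{2h'(s)+(sh'(s))'}$, which the hypotheses make strictly increasing, hence solvable at most once. The surrounding structure, however, is genuinely different and arguably cleaner. The paper first localizes: it takes roots $s_1,s_2$ of the two factors of $F$, argues by sign analysis that any solution of \eqref{wob:sheq} must lie between them, treats $b=0$ separately, and then invokes Rolle's theorem to produce a critical point in that interval before showing it is unique via the same monotonicity observation. You skip the localization, the case split, and Rolle entirely, and instead bound the number of critical points of $F$ on all of $(r_0,\infty)$, concluding by piecewise monotonicity. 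This buys you two things: you never need the roots $s_1,s_2$ to exist (the paper tacitly assumes they do, and in fact its ordering claim $s_1<s_2$ is backwards, since $h+sh'\ge h$ forces the root of the second factor to come first --- a harmless slip, but one your argument never encounters), and the $b=0$ case is absorbed into the general statement. What the paper's localization buys in exchange is the additional qualitative information that all solutions lie in the interval where $F<0$, which is not needed for the lemma itself. One small point worth making explicit in your write-up: the constant sign of $D(s)=2h'(s)+(sh'(s))'$ on $(r_0,\infty)$ follows from its nonvanishing because $D$ is the derivative of $2h(s)+sh'(s)$ and therefore has the intermediate value property even if $h''$ is not assumed continuous; with that noted, the division step and the final piecewise-monotonicity step are both airtight.
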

\begin{proof}
	Define 
	\begin{equation*}
		F(s) = \left[h(s) - a\right] \left[h(s) - a + s h^\prime(s)\right],
		\;\;
		s > r_{0},
	\end{equation*}
	and let $s_{1}$, $s_{2} > r_{0}$ be such that $h(s_{1}) = a$ and $h(s_{2}) + s_{2} h^\prime(s_{2}) = a$, so that $F(s_{1}) = F(s_{2}) = 0$.
	Since both $h(s)$ and $s h^\prime(s)$ are increasing, we have that $s_{1} < s_{2}$.
	Additionally, we have that $F(s) < 0$ on the interval $(s_{1}, s_{2})$, and $F(s) > 0$ otherwise.
	Thus, if a solution of \eqref{wob:sheq} exists, it has to belong to the interval $[s_{1}, s_{2}]$.
	
	If $b = 0$, then $s_{1}$ and $s_{2}$ are the solutions of \eqref{wob:sheq}, and no other solutions exist since $h(s)$ and $s h^\prime(s)$ are increasing.
	Thus, suppose that $b \ne 0$. 
	By Rolle's theorem, $F$ has a critical point $s_{c}$ on $(s_{1}, s_{2})$. 
	We have
	\begin{equation*}
		0 = F^{\prime} (s_{c}) 
		= h^{\prime}(s_{c}) \left[h(s_{c}) - a + s_{c} h^\prime(s_{c})\right] 
		+ \left[h(s_{c}) - a\right] \left[h^{\prime}(s_{c}) + \left( s_{c} h^{\prime} (s_{c}) \right)^{\prime}\right],
	\end{equation*}
	which yields 
	\begin{equation}
		\label{wob:eq:critical}
		h(s_{c}) - a
		= - \frac{s \left(h^{\prime} (s_{c})\right)^2}{2h^{\prime}(s_{c}) + \left( s_{c} h^{\prime} (s_{c})\right)^{\prime}}.
	\end{equation}
	By assumptions of the lemma, the left-hand side is increasing and the right-hand side is decreasing, and so $s_{c}$ is the unique solution of \eqref{wob:eq:critical}.
	Thus, $F$ has only one critical point on $(s_{1}, s_{2})$ and $F(s) = -b^{2}$ has at most two solutions on $(s_{1}, s_{2})$. 
	This completes the proof.
\end{proof}
We now prove the main result of this section.
\begin{theorem}
	\label{wob:thm:non-existence}
	Suppose that $h(s)$ satisfies the assumptions of Lemma \ref{wob:lemma:two-sols} and that $N > 2$. 
	Then wobblers do not exist.
\end{theorem}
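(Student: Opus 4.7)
The strategy is to reduce the wobbler dynamics to a rigid geometric constraint on the common displacement $\vx^{\star}(t)$, then show this constraint admits only finitely many solutions so that continuity forces $\vx^{\star}$ to be constant on an interval, contradicting \eqref{wob:eq:dnvi}.

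By Proposition \ref{wob:prop:wobblers} I may work on an open interval $I \subseteq (t_{0},\infty)$ on which $s_{i}(t) := |\vx_{i}^{\star}(t)| > r_{0}$ for every $i$. Define $\vp_{i}(t) := h(s_{i}(t))\vx_{i}^{\star}(t)$; since $\vu(\vx) = -h(|\vx|)\vx$, equation \eqref{wob:eq:u-shift} rewrites as $\vp_{i}(t) = \vc_{i} - \vd(t)$ with $\vc_{i} := h(|\vx_{i}^{0}|)\vx_{i}^{0}$, so $\dot{\vp}_{i}(t) = -\dot{\vd}(t)$ is the same vector for every $i$. A direct differentiation gives
\[
h(s_{i})\vw^{\star} + \frac{h'(s_{i})}{s_{i}}\bigl(\vx_{i}^{\star} \cdot \vw^{\star}\bigr)\vx_{i}^{\star} = -\dot{\vd}(t).
\]
Taking the dot product with $\vw^{\star}$ and the squared norm produces two scalar identities; eliminating $(\vx_{i}^{\star} \cdot \vw^{\star})^{2}$ between them yields, at every $t$ with $\vw^{\star}(t) \ne \vzero$,
\[
[h(s_{i}) - a(t)]\,[h(s_{i}) - a(t) + s_{i} h'(s_{i})] = -b(t)^{2},
\]
with $a(t) = -(\dot{\vd}\cdot\vw^{\star})/|\vw^{\star}|^{2}$ and $b(t)^{2} = (|\dot{\vd}|^{2}|\vw^{\star}|^{2} - (\dot{\vd}\cdot\vw^{\star})^{2})/|\vw^{\star}|^{4} \ge 0$ depending on $t$ but not on $i$. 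Lemma \ref{wob:lemma:two-sols} then bounds the number of distinct values in $\{s_{1}(t),\ldots,s_{N}(t)\}$ by two at all such $t$; by the density of $\{t : \vw^{\star}(t) \ne \vzero\}$ guaranteed by \eqref{wob:eq:dnvi} and continuity of the $s_{i}$, this bound extends to all of $I$.

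Since $N > 2$, pigeonhole yields $I = \bigcup_{i<j} E_{ij}$ with $E_{ij} := \{t \in I : s_{i}(t) = s_{j}(t)\}$ closed; a finite cover of the open set $I$ by closed sets cannot consist only of sets with empty interior, so some $E_{ij}$ contains an open interval $J$. On $J$, the identity $h(s_{i}(t))(\vx_{i}^{0} - \vx_{j}^{0}) = \vc_{i} - \vc_{j}$ together with $\vx_{i}^{0} \ne \vx_{j}^{0}$ forces $h(s_{i}(t))$ to be constant; strict monotonicity of $h$ on $(r_{0},\infty)$ then gives $s_{i} \equiv s_{j} \equiv \bar{s}$ constant on $J$, and differentiating $\vp_{i} = \vc_{i} - \vd$ yields $-\dot{\vd} = h(\bar{s})\vw^{\star}$ on $J$. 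To upgrade this to every $k \ne i,j$, combining $\dot{\vp}_{k} = h(\bar{s})\vw^{\star}$ with the chain-rule expression for $\dot{\vp}_{k}$ produces $[h(\bar{s}) - h(s_{k})]\vw^{\star} = h'(s_{k})\dot{s}_{k}\vx_{k}^{\star}$. If $s_{k} \not\equiv \bar{s}$ on $J$, pick an open $J_{k} \subseteq J$ with $s_{k} \ne \bar{s}$; at the dense $t \in J_{k}$ where $\vw^{\star} \ne \vzero$ this forces $\vx_{k}^{\star} \parallel \vw^{\star}$, whereas $s_{i} \equiv \bar{s}$ gives $\vx_{i}^{\star} \perp \vw^{\star}$, so $\vx_{k}^{\star} \perp \vx_{i}^{\star}$. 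Substituting $\vx_{k}^{\star} = \vx_{i}^{\star} + (\vx_{k}^{0} - \vx_{i}^{0})$ gives $(\vx_{k}^{0} - \vx_{i}^{0}) \cdot \vx_{i}^{\star} \equiv -\bar{s}^{2}$, whose time derivative yields $(\vx_{k}^{0} - \vx_{i}^{0}) \cdot \vw^{\star} = 0$, contradicting $(\vx_{k}^{0} - \vx_{i}^{0}) \cdot \vw^{\star} = \vx_{k}^{\star} \cdot \vw^{\star} = \pm s_{k}|\vw^{\star}| \ne 0$ obtained from parallelism.

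Therefore $s_{k}(t) \equiv \bar{s}$ on $J$ for every $k$, i.e., $\vx^{\star}(t)$ is equidistant from the $N \ge 3$ distinct points $\{-\vx_{k}^{0}\}$ at distance $\bar{s}$ throughout $J$. In $\R^{d}$ with $d \in \{2,3\}$, the set of such common equidistant points at a prescribed distance is the intersection of $N-1$ perpendicular-bisector hyperplanes with one sphere, which reduces to a finite set in every collinear, coplanar, or generic configuration of the $\vx_{k}^{0}$. Continuity then forces $\vx^{\star}$ constant on $J$, so $\vw^{\star} \equiv \vzero$ on $J$, contradicting \eqref{wob:eq:dnvi}. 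I expect the main obstacle to be the algebraic passage from the vector identity for $\dot{\vp}_{i}$ to the exact form demanded by Lemma \ref{wob:lemma:two-sols}, along with the orthogonality argument that collapses every $s_{k}$ onto the common value $\bar{s}$; the concluding geometric step, while requiring a brief case analysis, is routine once the collapse is established.
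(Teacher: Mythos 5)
Your argument is correct, and while it pivots on the same two ingredients as the paper --- Proposition \ref{wob:prop:wobblers}(i) and the quadratic equation of Lemma \ref{wob:lemma:two-sols} --- the mechanism is genuinely different. The paper uses Proposition \ref{wob:prop:wobblers}(ii) to pick two times $t_{1}<t_{2}$ with $\vd(t_{1})\ne\vd(t_{2})$ and applies a first-order Taylor (mean-value) expansion of $\vu$ between the two configurations, so the quadratic $[h(s)-a][h(s)-a+sh^{\prime}(s)]=-b^{2}$ is satisfied by the norm of an \emph{intermediate point} $\vy$ on the segment joining $\vx_{i}^{\star}(t_{1})$ and $\vx_{i}^{\star}(t_{2})$; since that equation is independent of $i$, at most two such points can exist, which is read off as incompatible with $N>2$. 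You instead differentiate the rigidity identity \eqref{wob:eq:u-shift} in time and land the same quadratic on the \emph{actual} distances $s_{i}(t)=\abs{\vx_{i}^{\star}(t)}$ at each instant where $\vw^{\star}(t)\ne\vzero$ (your elimination algebra checks out, with $a(t),b(t)$ indeed $i$-independent). This only bounds the number of distinct distances by two, which is weaker than what the paper extracts, so you must pay for it with the pigeonhole/Baire step, the propagation of the common constant radius $\bar{s}$ to every agent via the orthogonality/parallelism dichotomy, and the concluding sphere-intersection finiteness argument; all of these steps are sound (in the collinear case the common equidistant set is in fact empty, which still gives the contradiction). What your route buys is that it avoids the vector-valued mean value theorem invoked in \eqref{wob:eq:linearized} --- which does not hold verbatim for maps into $\R^{d}$ and strictly speaking needs an integral-form or componentwise workaround --- and it does not rely on distinct agents producing distinct intermediate points; the cost is a substantially longer endgame, and a dependence on the differentiability of $\vd(t)$, which does follow from $h\in C^{1}$ and $\vx^{\star}\in C^{1}$ but deserves an explicit sentence.
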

\begin{proof}
	Suppose that $(\vX^{\star}, \vW^{\star})$ is a wobbler.
	By Proposition \ref{wob:prop:wobblers}, there exist $t_{1}, t_{2} \in \R$, $t_{0} \le t_{1} < t_{2}$, such that $\vd(t_{1}) \ne \vd(t_{2})$ and that $\abs{\vx_{i}^{\star}(t)} \in B_{i} \subseteq \R^{d}$, where $B_{i}$ is an open ball lying outside of the ball $B(0, r_{0})$, for all $t \in [t_{1}, t_{2}]$ and $i = 1, \ldots, N$.
	
	Fix $i \in \{1, \ldots, N\}$ and let $\vy_{1} = \vx_{i}^{\star}(t_{1})$, $\vy_{2} = \vx_{i}^{\star}(t_{2})$, $\vy^{\star} = \vy_{2} - \vy_{1}$, $\vd^{\star} = \vd(t_{2}) - \vd(t_{1})$.
	Using a first-degree Taylor expansion of $\vu$ at $\vy_{1}$, we can write \eqref{wob:eq:rigidity} as 
	\begin{equation}
		\label{wob:eq:linearized}
		\vd^{\star}
		= \vu\left(\vy_{2}\right) - \vu\left(\vy_{1}\right)
		= \nabla \vu (\vy) \vy^{\star} 
		= - \abs{\vy} h^{\prime} (\abs{\vy}) \left( \frac{\vy}{\abs{\vy}} \otimes \frac{\vy}{\abs{\vy}} \right) \vy^{\star}
		- h (\abs{\vy}) \vy^{\star},
	\end{equation}
	where $\vy$ belongs to the line segment connecting $\vy_{1}$ and $\vy_{2}$, so that $\vy \in B_{i}$ and $\vy \ne \vzero$.
	Write \eqref{wob:eq:linearized} as 
	\begin{equation}
		\label{wob:eq:linearized-dot}
		\abs{\vy} h^{\prime} (\abs{\vy})
		\left(\frac{\vy}{\abs{\vy}}  \cdot \vy^{\star} \right) \frac{\vy}{\abs{\vy}}
		= \vp,
	\end{equation}
	where $\vp = - \vd^{\star} - h(\abs{\vy})\vy^{\star}$. 
	Since $\frac{\vy}{\abs{\vy}}$ is a unit vector that is collinear with $\vp$, we can write
	\begin{equation}
		\label{wob:eq:linearized-p}
		\abs{\vy} h^{\prime} (\abs{\vy})
		\left(\frac{\vp}{\abs{\vp}} \cdot \vy^{\star} \right) \frac{\vp}{\abs{\vp}}
		= \vp.
	\end{equation}
	Note that $\vy^{\star} \ne \vzero$ since $\vd^{\star} \ne \vzero$ and $\vu$ is one-to-one.
	Also, $h^{\prime}(s) > 0$ whenever $s > r_{0}$ by the assumption of the theorem.
	Thus, \eqref{wob:eq:linearized-p} is equivalent to
	\begin{equation}
		\label{wob:eq:linearized-1d}
		\abs{\vy} h^{\prime} (\abs{\vy})
		\left(\vp \cdot \vy^{\star} \right) 
		= \abs{\vp}^2
	\end{equation}
	for all $\vp \ne \vzero$. 
	
	Suppose that $\vp = \vzero$, so that $\vd^{\star} = - h(\abs{\vy})\vy^{\star}$.
	Then \eqref{wob:eq:rigidity} yields
	\begin{equation*}
		- h(\abs{\vy})\vy^{\star} 
		= - h(\abs{\vy_{1} + \vy^{\star}})(\vy_{1} + \vy^{\star}) + h(\abs{\vy_{1}})\vy_{1}.
	\end{equation*}
	Rearranging terms, we get
	\begin{equation}
		\label{wob:eq:linearized-linindep}
		\left[h(\abs{\vy_{1} + \vy^{\star}}) - h(\abs{\vy_{1}})\right] \vy_{1} 
		+ \left[h(\abs{\vy_{1} + \vy^{\star}}) - h(\abs{\vy})\right] \vy^{\star}
		= \vzero.
	\end{equation}
	If $\vy_{1}$ is collinear with $\vy^{\star}$, then so are $\vy_{2} = \vy_{1} + \vy^{\star}$ and $\vy$.
	However, it follows from \eqref{wob:eq:linearized-dot} that $\vy \cdot \vy^{\star} = \vzero$, and thus $\vy_{1}$ and $\vy^{\star}$ have to be linearly independent.
	Then \eqref{wob:eq:linearized-linindep} implies that
	\begin{equation*}
		h(\abs{\vy_{1} + \vy^{\star}}) - h(\abs{\vy_{1}}) 
		= h(\abs{\vy_{1} + \vy^{\star}}) - h(\abs{\vy})
		= 0,
	\end{equation*}
	and hence
	\begin{equation*}
		\abs{\vy_{1} + \vy^{\star}} = \abs{\vy_{1}} = \abs{\vy},
	\end{equation*}
	which is not possible since $\vy$ belongs to the line segment connecting $\vy_{1}$ and $\vy_{1} + \vy^{\star}$.
	Therefore $\vp \ne 0$.
	
	Let $\vy^{\perp}$ be such that $\vy^{\star} \cdot \vy^{\perp} = 0$ and $\vd^{\star} = -a \vy^{\star} - \vy^{\perp}$ for some $a \in \R$.
	Then \eqref{wob:eq:linearized-1d} can be written as
	\begin{equation*}
		s h^{\prime}(s)
		\left(a \vy^{\star} + \vy^{\perp} - h(s) \vy^{\star}\right) \cdot \vy^{\star}
		= \abs{a \vy^{\star} + \vy^{\perp} - h(s) \vy^{\star} }^2,
	\end{equation*}
	from which we get
	\begin{equation*}
		\abs{\vy} h^{\prime}(\abs{\vy}) \left[h(\abs{\vy}) - a\right] \abs{\vy^{\star}}^{2} 
		= \left[h(\abs{\vy}) - a\right]^{2} \abs{\vy^{\star}}^{2} + \abs{\vy^{\perp}}^2,
	\end{equation*}
	and so
	\begin{equation}
		\label{wob:eq:two-sols}
		\left[h(\abs{\vy}) - a\right]^{2} + \abs{\vy} h^{\prime}(\abs{\vy}) \left[h(\abs{\vy}) - a\right] 
		= - \frac{\abs{\vy^{\perp}}^2}{ \abs{\vy^{\star}}^{2} }.
	\end{equation}
	By Lemma \ref{wob:lemma:two-sols}, \eqref{wob:eq:two-sols} has at most two distinct solutions for $\abs{\vy}$.
	Furthermore, since $\vy$ and $\vp$ are collinear and $\vp$ is completely determined by $\abs{\vy}$, so is $\vy$.
	Thus, \eqref{wob:eq:linearized-p} has at most two solutions for $\vy$.
	Finally, since \eqref{wob:eq:linearized-p} depends on $i$ only through $\vy$, it follows that $(\vX^{\star}, \vW^{\star})$ does not exist, provided $N > 2$.
\end{proof}
As it can be seen from the following example, the assumptions of Lemma \ref{wob:lemma:two-sols} are easy to satisfy. 
\begin{example}
	\normalfont
	Suppose that $k_{1}(s) = s^{\gamma + 1}$, $\gamma > 0$, $s \ge r_{1}$ for some $r_{1} > r_{0}$. 
	Then
	\begin{equation*}
		h_{1}(s) = \frac{k_{1}(s)}{s} = s^{\gamma},
		\;\;
		sh_{1}^{\prime}(s) = \gamma s ^{\gamma},
		\;\;
		\frac{s \left(h_{1}^{\prime} (s)\right)^2}{2h_{1}^{\prime}(s) + \left(s h_{1}^{\prime} (s)\right)^{\prime}}
		= \frac{\gamma}{\gamma + 2} s^{\gamma},
	\end{equation*}
	are all increasing on $(r_{1}, \infty)$. 
	If one puts $h_{2}(s) = s e^{s}$, $s \ge r_{2}$ for some $r_{2} > r_{0}$, then
	\begin{equation*}
		h_{2}(s) = \frac{k_{2}(s)}{s} = e^{s},
		\;\;
		sh_{2}^{\prime}(s) = s e^{s},
		\;\;
		\frac{s \left(h_{2}^{\prime} (s)\right)^2}{2h_{2}^{\prime}(s) + \left(s h_{2}^{\prime} (s)\right)^{\prime}}
		= \frac{s}{s + 3} e^{s}.
	\end{equation*}
	One can verify that the above functions are increasing on $(r_{2}, \infty)$ by computing the corresponding derivatives.
	Similarly, if $k_{3}(s) = s \ln(s + 1)$, $s \ge r_{3}$ for some $r_{3} > r_{0}$, then
	\begin{equation*}
		h_{3}(s) = \frac{k_{3}(s)}{s} = \ln(s + 1),
		\;\;
		sh_{3}^{\prime}(s) = \frac{s}{s + 1},
		\;\;
		\frac{s \left(h_{3}^{\prime} (s)\right)^2}{2h_{3}^{\prime}(s) + \left(s h_{3}^{\prime} (s)\right)^{\prime}}
		= \frac{s}{2s + 3},
	\end{equation*}
	are increasing on $(r_{3}, \infty)$.
	Thus, it is easy to guarantee the non-existence of wobblers for a large class of nonlinear position alignment forces.
\end{example}

\subsection{Linear case: existence}

Now, suppose that $r_{0} = 0$ and $k(s) = s$, so that
\begin{equation*}
	\vu(\vx) = -\alpha \vx
\end{equation*}
is linear. 
Equations \eqref{wob:eq:wobbler} can be written as
\begin{equation}
	\label{wob:eq:lin-wob}
	\ddot{\vx}^{\star} + \alpha \vx^{\star}
	= \vf_{i}^{C} - \alpha \vx_{i}^{0},
	\;\;
	i = 1, \ldots, N.
\end{equation}
Since the left-hand side does not depend on the index $i$, and the right-hand side does not depend on $t$, the right-hand side is a constant vector, which we denote by $\vf^{0}$. 
We assume that the equations of motion are supplemented with the initial conditions
\begin{equation}
	\label{wob:eq:lin-inits}
	\vx^{\star}(t_0)=0,\qquad \dot{\vx^{\star}}(t_0)=\vw^{\star} (0).
\end{equation}
Solving the characteristic equation for \eqref{wob:eq:lin-wob} yields the fundamental set of solutions $\cos\mu t$, $\sin\mu t$, where $\mu = \alpha^{1/2}$.
The solution of the IVP \eqref{wob:eq:lin-wob}, \eqref{wob:eq:lin-inits} is
\begin{equation}
	\label{wob:eq:linear-wob-sol}
	\vx^{\star}(t)=\v{a}\cos(\mu t)+\v{b}\sin(\mu t) + \alpha^{-1} \vf^{0},
\end{equation}
where 
\begin{eqnarray}
	\label{wob:eq:linear-wob-sol-a}
	\v{a}& = & - \alpha^{-1} \cos \mu t_0 \vf^{0} - \mu^{-1} \sin \mu t_0 \vw_0^{\star},\\
	\label{wob:eq:linear-wob-sol-b}
	\v{b}& = & \mu^{-1} \cos \mu t_0 \vw_0^{\star}-m^{-1} \sin \mu t_0 \vf^{0}.
\end{eqnarray}
Equations \eqref{wob:eq:linear-wob-sol}-\eqref{wob:eq:linear-wob-sol-b} describe a periodic translation of the initial array $\vX_0^{\star}=(\vx_1^{0},\ldots, \vx_N^{0})$. 
The dependence of this array on the parameter $\vf_{0}$, is prescribed by solving the quasi-equilibrium equations
\begin{equation*}
	\vf_{i}^C(\vX_0^{\star}) + \vu(\vx_{i}^{0}) = \vf^{0}.
\end{equation*}
for any $\vf^{0}$ that would yield $\vX^{\star}, \vW^{\star}$ compatible with \eqref{wob:eq:w-bounded} and \eqref{wob:eq:x-bounded}.

\begin{example}
	\normalfont
	Suppose that $t_{0} = 0$, and consider an equilibrium solution $(\vf^{0} = \vzero)$ as the initial array.
Then, for each admissible $\vw^{\star}(0) \ne \vzero$, the initial array undergoes a rigid periodic translation 
\begin{equation}
	\label{wob:eq:linear-periodic}
	\vx_{i}^{\star}(t) = \vx_{i}^{0} + \mu^{-1} \sin(\mu t) \vw^{\star}(0), 
	\;\; 
	i=1, \ldots, N.
\end{equation}
The inequalities \eqref{wob:eq:w-bounded} and \eqref{wob:eq:x-bounded} impose restrictions on $\vx_{i}^{0}$, $\mu$, and $\vw^{\star}(0)$. 
In particular, 
\begin{equation*}
	\abs{\vx_{i}^{0}} \le R
\end{equation*}
and
\begin{equation*}
	\abs{\vx_{i}^{0} + \mu^{-1} \vw^{\star}(0)} \le R,
\end{equation*}
where $\vw^{\star}(0)$ must also satisfy \eqref{wob:eq:w-bounded}.
\end{example}

\section{Non-dissipative dynamics}
\label{section:non-diss}

In this section, we investigate flocking in the case when the total mechanical energy cannot be guaranteed to be non-increasing in time, and the corresponding dynamics may be non-dissipative. In such cases, the Lyapunov-like argument used earlier is no longer applicable.

Since flocking cannot be expected to occur in the most general case, suitable assumptions must be imposed on the forces. In this section, non-linear self-propulsion forces are treated as bounded perturbations of the linear forces. This assumption is quite general, because it allows us to work with forces prescribed by arbitrary bounded functions on a bounded region in the phase space. As the phase space distance from the origin increases, these forces should be approximately linear in the sense of Definition 3 given below.
Another important example of forces that can produce non-dissipative dynamics is the force $M \boldsymbol{f}^L$ associated with the acceleration of the virtual leader. 

From the practical standpoint, our results are applicable to (i) leader trajectories with arbitrary bounded accelerations, such as circular motions with bounded angular velocities, spiraling, etc.; (ii) non-dissipative velocity alignment forces whose power (rate of work) may change sign over time; and (iii) random forces given by bounded random variables. For such forces, we provide bounds for every realization, considered "one realization at a time." The size of the absorbing ball remains fixed as long as the bounds on the force realizations are fixed.

The central analytical result is Theorem 3, which provides time-independent bounds for the positions and velocities of agents. This implies the existence of an absorbing ball in the phase space, which every trajectory enters at a certain time and thereafter remains within the ball. Once the existence of the absorbing ball is established, standard arguments (see, e.g., \cite{temam1997infinite}) can be used to prove the existence of a global attractor. This in turn implies that the system exhibits approximate flocking. 

Next, we show that, for some special configurations of the tunable control parameters, the flocking becomes exact and proper.
We then consider the structural dynamics of the group and show that the velocities of the agents converge to the velocity of the center of mass at an exponential rate under some mild additional assumptions on the nonlinearities of the self-propulsion forces, making the flocking exact for a broader range of control parameters.
Finally, we show that the deviation of the trajectory of the center of mass from that of the virtual leader is bounded.

\subsection{Boundedness of solutions}

The first assumption we make is that the acceleration of the virtual leader is bounded, which is natural to expect in any practical application.
\begin{assumption}
	\label{non-diss:assmp:fl-is-bdd}
	There exists $C_{l} \ge 0$ such that $\abs{\vf^{L}(t)} \le C_{l}$, for all $t \ge 0$.
\end{assumption}
In addition, we restrict our attention to the self-propulsion forces that are bounded perturbations of the linear ones.
\begin{definition}
	\label{non-diss:def:approx-linear}
	A function $\vf: \R^{d} \to \mathbb{R}^{d}$ is \textbf{approximately linear} if there exist real constants $K \ne 0$ and $C>0$ such that
	\begin{equation*}
		\abs{\vf(\vy) - K \vy } \leq C
	\end{equation*}
	for all $\vy \in \R^{d}$ and $t \in [0, \infty)$.
\end{definition}
\begin{assumption}
	\label{non-diss:assmp:sp-forces-are-approx-linear}
	The position alignment force $\vu^{P}$ and the velocity alignment force $\vu^{V}$ are approximately linear with constants $-K_{p}$, $C_{p}$ and $-K_{v}$, $C_{v}$, respectively, where $K_{p}, K_{v} > 0$.
\end{assumption}
Note that Definition \ref{non-diss:def:approx-linear} requires a function to be only ``asymptotically linear'', allowing for arbitrary behavior for bounded values of the argument.
Consequently, the class of approximately linear functions contains a broad range of nonlinear control protocols that may be of practical interest.

Let
\begin{equation*}
	\delta \vu^{P}(\vy) = \vu^{P}(\vy) + K_{p} \vy, 
	\;\;
	\vy \in \R^{d},
\end{equation*}
and
\begin{equation*}
	\delta \vu^{V}(\vy) = \vu^{V}(\vy) + K_{v} \vy, 
	\;\;
	\vy \in \R^{d},
\end{equation*}
denote the nonlinear parts of the position alignment and the velocity alignment forces, respectively.
Write \eqref{diss:dynamics} as
\begin{equation}
	\label{non-diss:eq:dynamics-full}
	\begin{aligned}
		\dot{\vX} & = \vW,
		\\
		M \dot{\vW} & = - \nabla_{\vX} U(\vX) - L(\vX) \vW - K_{p} \vX - K_{v} \vW + \vF(t),
	\end{aligned}
\end{equation}
where
\begin{equation*}
	\vF(t) = \delta \vu^{P}(\vX) + \delta \vu^{V}(\vW) - M \vf^{L} \otimes \vones_{N}
\end{equation*}
and 
\begin{equation*}
	\delta \vu^{P}(\vX) = \diag \left( \delta \vu^{P}(\vx_{1}), \ldots,  \delta \vu^{V}(\vx_{N}) \right),
	\quad
	\delta \vu^{V}(\vW) = \diag \left( \delta \vu^{V}(\vv_{1}), \ldots,  \delta \vu^{V}(\vv_{N}) \right).
\end{equation*} 
Note that it follows from Assumptions \ref{non-diss:assmp:fl-is-bdd} and \ref{non-diss:assmp:sp-forces-are-approx-linear} that 
\begin{equation}
	\label{non-diss:eq:f-bound}
	\abs{\vF(t)} \le C_{1}
\end{equation}
for all $t \ge 0$, where
\begin{equation*}
	C_{1} = N^{\frac{1}{2}}(C_{p} + C_{v} + M C_{l}).
\end{equation*}

Consider the following reduced system
\begin{equation}
	\label{non-diss:eq:dynamics-reduced}
	\begin{aligned}
		\dot{\vX} & = \vW,
		\\
		M\dot{\vW} & = - \nabla_{\vX} U(\vX) -L(\vX) \vW - K_{p} \vX - K_{v}  \vW.
	\end{aligned}
\end{equation}
We first prove that the velocities of the reduced system \eqref{non-diss:eq:dynamics-reduced} decay exponentially.
\begin{lemma}
	\label{non-diss:lemma:w-reduced-decay}
	Let $(\vX(t), \vW(t))$ be a solution of the reduced system \eqref{non-diss:eq:dynamics-reduced}.
	Then, for any $t_{0} \ge 0$, 
	\begin{equation*}
		\abs{\vW(t)} \le \abs{\vW(t_{0})} e^{-\frac{K_{v}}{M}(t-t_{0})}
	\end{equation*}
	for all $t \ge t_{0}$.
\end{lemma}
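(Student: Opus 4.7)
My plan is an energy-based Lyapunov argument tailored to the reduced system. The natural candidate Lyapunov function is the ``effective total energy''
$E(\vX,\vW) = \tfrac{M}{2}|\vW|^{2} + U(\vX) + \tfrac{K_{p}}{2}|\vX|^{2}$,
which combines kinetic energy, the ambient repulsive potential, and the harmonic potential associated with the linearized position alignment force. Differentiating along solutions of \eqref{non-diss:eq:dynamics-reduced} and using $\dot{\vX}=\vW$, the conservative contributions $-\nabla_{\vX}U$ and $-K_{p}\vX$ in the velocity equation cancel against $\tfrac{d}{dt}U(\vX)$ and $K_{p}\vX\cdot\vW$, leaving
$\dot{E} = -\vW^{T}L(\vX)\vW - K_{v}|\vW|^{2} \le -K_{v}|\vW|^{2}$,
since $L(\vX)$ is positive semi-definite (the Laplacian of the proximity graph). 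This is the foundational dissipation estimate.

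The next task is to promote this integrated dissipation identity into a pointwise exponential bound on $|\vW|$ alone with rate $K_{v}/M$. The most direct route is to write the kinetic-energy evolution explicitly,
$\tfrac{M}{2}\tfrac{d}{dt}|\vW|^{2} + K_{v}|\vW|^{2} = -\vW^{T}L(\vX)\vW - \tfrac{d}{dt}\bigl[U(\vX)+\tfrac{K_{p}}{2}|\vX|^{2}\bigr]$,
then multiply through by the integrating factor $e^{2K_{v}(t-t_{0})/M}$, integrate from $t_{0}$ to $t$, and perform an integration by parts on the potential-energy piece. After rearrangement, the hope is to obtain an identity of the form
$E(\vX(t),\vW(t))\,e^{2K_{v}(t-t_{0})/M} = E(\vX(t_{0}),\vW(t_{0})) - \int_{t_{0}}^{t} e^{2K_{v}(s-t_{0})/M}\vW^{T}L(\vX)\vW\,ds + \text{(remainder)}$,
from which, by discarding the non-negative Laplacian integral and the non-negative potential contribution of $E$ on the left, the pointwise bound $|\vW(t)|^{2}\le |\vW(t_{0})|^{2}e^{-2K_{v}(t-t_{0})/M}$ would follow.

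The main obstacle is the absence of a definite sign for the cross terms $-\vW\cdot\nabla_{\vX}U(\vX)$ and $-K_{p}\vW\cdot\vX$ appearing in $\tfrac{d}{dt}|\vW|^{2}$: physically they encode conversion of potential into kinetic energy, and they allow instantaneous growth of $|\vW|^{2}$ which would, at first glance, preclude a Gronwall estimate with rate $K_{v}/M$. The delicate step in the plan is showing that, after the integration by parts, the boundary and weighted-integral terms involving $U+\tfrac{K_{p}}{2}|\vX|^{2}$ combine with the exponential weight into exactly the package that can be absorbed into $E(t)e^{2K_{v}(t-t_{0})/M}$ on the left-hand side, leaving only non-positive residuals on the right. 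I would verify this bookkeeping carefully, as this is the step most sensitive to the precise form of the reduced system and the non-negativity of $U$, $|\vX|^{2}$, and $\vW^{T}L\vW$; any mismatch here would have to be repaired by either an alternative coupled-variable auxiliary function or a stronger exploitation of $L(\vX)\ge 0$.
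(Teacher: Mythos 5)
Your first half---the energy $E_{r}(\vX,\vW)=\tfrac{1}{2}M\abs{\vW}^{2}+U(\vX)+\tfrac{1}{2}K_{p}\abs{\vX}^{2}$ and the dissipation estimate $\tfrac{d}{dt}E_{r}\le-K_{v}\abs{\vW}^{2}$---coincides exactly with the paper's proof. The gap is that everything after that point is a stated hope rather than an argument, and that is where all the difficulty sits. For comparison, the paper does not integrate by parts with an exponential weight: it integrates the dissipation inequality to get $\tfrac{M}{2}\abs{\vW(t)}^{2}\le E_{r}(t)\le E_{r}(t_{0})-K_{v}\Lambda(t)$ with $\Lambda(t)=\int_{t_{0}}^{t}\abs{\vW(\tau)}^{2}\,d\tau$, reads this as the differential inequality $\Lambda'\le\tfrac{2}{M}E_{r}(t_{0})-\tfrac{2K_{v}}{M}\Lambda$, applies an integrating factor to bound $\Lambda(t)$, then \emph{differentiates the resulting inequality} to claim $\abs{\vW(t)}^{2}\le\tfrac{2E_{r}(t_{0})}{M}e^{-\frac{2K_{v}}{M}(t-t_{0})}$, and finally replaces $E_{r}(t_{0})$ by $\tfrac{1}{2}M\abs{\vW(t_{0})}^{2}$.

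Your caveat about the sign-indefinite cross terms is the correct diagnosis, and no bookkeeping will remove it: the bound as stated cannot follow from this energy method, and is in fact false in general. Two concrete checks: (i) if $\vW(t_{0})=\vzero$ but $\vX(t_{0})\ne\vzero$, the claimed estimate forces $\vW\equiv\vzero$ for $t\ge t_{0}$, while $M\dot{\vW}(t_{0})=-\nabla_{\vX}U(\vX(t_{0}))-K_{p}\vX(t_{0})$ is generically nonzero; (ii) for a single underdamped degree of freedom $M\ddot{x}=-K_{p}x-K_{v}\dot{x}$ the velocity envelope decays like $e^{-K_{v}t/(2M)}$, not $e^{-K_{v}t/M}$, so even the rate is too optimistic. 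Correspondingly, the two final steps of the paper's own proof are its weak points: differentiating an integral inequality does not bound the integrand, and $E_{r}(t_{0})\ge\tfrac{1}{2}M\abs{\vW(t_{0})}^{2}$ is the reverse of the inequality needed to pass from $\tfrac{2E_{r}(t_{0})}{M}$ to $\abs{\vW(t_{0})}^{2}$. What the energy method honestly yields is the uniform bound $\tfrac{M}{2}\abs{\vW(t)}^{2}\le E_{r}(t_{0})$ together with the integrability $\int_{t_{0}}^{\infty}\abs{\vW}^{2}\,d\tau\le E_{r}(t_{0})/K_{v}$; a genuine pointwise exponential estimate (with prefactor controlled by $E_{r}(t_{0})$ and rate of order $K_{v}/(2M)$) requires a modified Lyapunov function containing a cross term proportional to $\vX\cdot\vW$, not the plain energy. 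So your proposal does have a genuine gap, but it is one shared with---not created by---the paper's proof, and closing it honestly requires weakening the statement of the lemma.
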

\begin{proof}
	Let 
	\begin{equation}
		\label{non-diss:eq:energy-reduced-def}
		E_{r}(\vX, \vW) = \frac{1}{2} M \abs{\vW}^{2} + \vU(\vX) + \frac{1}{2} K_{p}\abs{\vX}^2.
	\end{equation}	
	Multiplying both sides of the second equation in \eqref{non-diss:eq:dynamics-reduced} by $\vW$ yields
	\begin{equation}
		\label{non-diss:eq:energy-estimate}
		\frac{d}{dt} E_{r}(\vX(t), \vW(t)) =  - \vW^{T} L(\vX) \vW - K_{v} \abs{\vW}^2 \le - K_{v} \abs{\vW}^2
	\end{equation}
	since $L(\vX)$ is positive semi-definite.
	
	Let $t_{0} \ge 0$. 
	Integrating both sides of \eqref{non-diss:eq:energy-estimate}, we get
	\begin{equation*}
		E_{r}(\vX(t), \vW(t)) \le - K_{v} \int_{t_{0}}^{t} \abs{\vW(\tau)}^{2} d\tau + E_{r}(t_{0}),
	\end{equation*}
	where $E_{r}(t_{0}) = E(\vX(t_{0}), \vW(t_{0}))$.
	Hence,
	\begin{equation}
		\label{non-diss:eq:energy-estimate-2}
		\abs{\vW(t)}^{2} \le - \frac{2 K_{v}}{M} \int_{t_{0}}^{t} \abs{\vW(\tau)}^{2} d\tau + \frac{2 E_{r}(t_{0})}{M}.
	\end{equation}
	Let $\Lambda(t) = \int_{t_{0}}^{t} \abs{\vW(\tau)}^{2} d \tau$. 
	Multiplying both sides of \eqref{non-diss:eq:energy-estimate-2} by $e^{\frac{2 K_{v}}{M}(t-t_{0})}$ yields 
	\begin{equation}
		\label{non-diss:eq:energy-estimate-3}
		\frac{d}{dt} \left( e^{\frac{2 K_{v}}{M}(t-t_{0})} \Lambda(t) \right)  \le \frac{2 E_{r}(t_{0})}{M}  e^{\frac{2 K_{v}}{M}(t-t_{0})}.
	\end{equation}
	Integrating \eqref{non-diss:eq:energy-estimate-3}, we get
	\begin{equation*}
		e^{\frac{2 K_{v}}{M}(t-t_{0})} \Lambda(t) \le  \Lambda(t_{0}) +  \frac{E_{r}(t_{0})}{K_{v}}\left( e^{\frac{2 K_{v}}{M}(t-t_{0})} - 1 \right),
	\end{equation*}
	and therefore
	\begin{equation}
		\label{non-diss:eq:energy-estimate-4}
		\int_{t_{0}}^{t} \abs{\vW(\tau)}^{2} d \tau \le \frac{E_{r}(t_{0})}{K_{v}} \left(1 -  e^{-\frac{2 K_{v}}{M}(t-t_{0})} \right)
	\end{equation}
	since $\Lambda(t_{0}) = 0$.
	Differentiating \eqref{non-diss:eq:energy-estimate-4} yields
	\begin{equation*}
		\begin{aligned}
		\abs{\vW(t)}^{2} 
		& \le \frac{2 E_{r}(t_{0})}{M} e^{-\frac{2 K_{v}}{M}(t-t_{0})}
		\\
		& \le E_{r}(t_{0}) \left(\frac{1}{2}M\abs{\vW(t_{0})}^{2}\right)^{-1} \abs{\vW(t_{0})}^{2} e^{-\frac{2 K_{v}}{M}(t-t_{0})}
		\\
		& \le \abs{\vW(t_{0})}^{2} e^{-\frac{2 K_{v}}{M}(t-t_{0})},
		\end{aligned}
	\end{equation*}
	and the result follows.
\end{proof}

Now, observe that analogously to the ambient dissipative term $-L(\vW)\vW$, the ambient conservative term $- \nabla_{\vX} U(\vX)$ in \eqref{non-diss:eq:dynamics-full} and \eqref{non-diss:eq:dynamics-reduced} can be written as $L_{C}(\vX)\vX$, where $L_{C} (\vX) = \mathcal{L}_{C}(\vX) \otimes I_{d}$.
Here, $\mathcal{L}_{C}(\vX)$ is the Laplacian matrix of the position-dependent undirected weighted graph $\mathcal{G}_{C}(\v{X}) = \left( \mathcal{A}, \mathcal{E}_{C}(\vX), \sigma_{C} \right)$, with $\mathcal{A} = \{1 , \ldots , N\}$, $\mathcal{E}_{C}(\vX) = \{ (i, j) \in \mathcal{A} \times \mathcal{A} : \abs{\vx_{ij}} \le r_{C} \}$, and $\sigma_{C}(i, j) = A w_{C} (\abs{\vx_{ij}})$.
This means that \eqref{non-diss:eq:dynamics-reduced} can be viewed as a linear non-autonomous system, while \eqref{non-diss:eq:dynamics-full} can be viewed as its non-homogeneous counterpart.
This observation along with Lemma \ref{non-diss:lemma:w-reduced-decay} allows us to show that the velocities of the full system \eqref{non-diss:eq:dynamics-full} are bounded.

\begin{lemma}
	\label{non-diss:lemma:w-full-bdd}
	Let $(\vX(t), \vW(t))$ be a solution of the full system \eqref{non-diss:eq:dynamics-full}.
	Suppose that Assumptions \ref{non-diss:assmp:fl-is-bdd} and \ref{non-diss:assmp:sp-forces-are-approx-linear} hold.
	Then, for any $\varepsilon_{0} > 0$, there exists $t_{0} = t_{0}(\vW(0), \varepsilon_{0}) \ge 0$ such that 
	\begin{equation*}
		\abs{\vW(t)} \le \varepsilon_{0} + \frac{C_{1}}{K_{v}}
	\end{equation*} 
	for all $t \ge t_{0}$.
\end{lemma}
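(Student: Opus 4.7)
The plan is to adapt the energy-based argument of Lemma \ref{non-diss:lemma:w-reduced-decay} to the non-autonomous forced system \eqref{non-diss:eq:dynamics-full}. The extra term $\vF(t)$, uniformly bounded by $C_1$ via \eqref{non-diss:eq:f-bound}, introduces a perturbation that ultimately accounts for the asymptotic offset $C_1/K_v$ in the velocity bound.

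First, I would take the inner product of the velocity equation in \eqref{non-diss:eq:dynamics-full} with $\vW$. Recognizing that $\vW \cdot \nabla_\vX U(\vX) + K_p \vW \cdot \vX$ equals $\frac{d}{dt}\bigl(U(\vX) + \tfrac{K_p}{2}|\vX|^2\bigr)$, using the positive semi-definiteness of $L(\vX)$, and bounding $|\vW \cdot \vF| \le C_1 |\vW|$, I obtain
\begin{equation*}
\dot E_r(\vX(t), \vW(t)) \le -K_v |\vW(t)|^2 + C_1 |\vW(t)|.
\end{equation*}
Young's inequality $C_1 |\vW| \le \tfrac{K_v}{2}|\vW|^2 + \tfrac{C_1^2}{2K_v}$ then absorbs the perturbation into the dissipation, giving $\dot E_r \le -\tfrac{K_v}{2}|\vW|^2 + \tfrac{C_1^2}{2K_v}$.

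Second, I would integrate this inequality from $t_0$ to $t$, use the coercive bound $\tfrac{M}{2}|\vW(t)|^2 \le E_r(t)$ (a consequence of the non-negativity of $U$ and $\tfrac{K_p}{2}|\vX|^2$), and set $\Lambda(t) = \int_{t_0}^t |\vW(\tau)|^2 d\tau$. This yields the linear differential inequality
\begin{equation*}
\dot\Lambda(t) + \tfrac{K_v}{M}\Lambda(t) \le \tfrac{2 E_r(t_0)}{M} + \tfrac{C_1^2 (t - t_0)}{M K_v},
\end{equation*}
which is directly analogous to the one appearing in the proof of Lemma \ref{non-diss:lemma:w-reduced-decay} but now with an affine, rather than constant, right-hand side. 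I would solve it with the integrating factor $e^{K_v(t-t_0)/M}$ and then differentiate the integrated bound to recover a pointwise estimate of the form
\begin{equation*}
|\vW(t)|^2 \le B\bigl(\vW(0)\bigr) \, e^{-K_v(t-t_0)/M} + \tfrac{C_1^2}{K_v^2},
\end{equation*}
where $B$ depends on the initial data through $E_r(t_0)$. Taking square roots via $\sqrt{a+b} \le \sqrt{a}+\sqrt{b}$ and choosing $t_0$ large enough so that $B\,e^{-K_v(t-t_0)/M} \le \varepsilon_0^2$ for all $t \ge t_0$ yields the claim.

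The main technical subtlety, absent from Lemma \ref{non-diss:lemma:w-reduced-decay}, is that the constant $\tfrac{C_1^2}{2K_v}$ on the right-hand side of the energy inequality does not vanish, so it contributes an affine term to the Grönwall inequality for $\Lambda$. Handling this term requires integrating $e^{K_v(t-t_0)/M}(t-t_0)$ by parts and carefully collecting the exponential and constant contributions, after which the exponential decay of the initial-data part and the asymptotic offset $C_1^2/K_v^2$ separate cleanly. A secondary subtlety is that $E_r$ is not a decreasing quantity under the full dynamics, so only the kinetic component, and not $E_r$ itself, can be bounded uniformly in $t$.
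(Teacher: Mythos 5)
Your route is genuinely different from the paper's: the paper proves this lemma by writing the solution of \eqref{non-diss:eq:dynamics-full} through the variation-of-constants formula with the state-transition matrix of the reduced system \eqref{non-diss:eq:dynamics-reduced}, and then applies the decay estimate of Lemma \ref{non-diss:lemma:w-reduced-decay} to the homogeneous part and to the propagated forcing, obtaining $\abs{\vW(t)}\le\abs{\vW(0)}e^{-\frac{K_v}{M}t}+\frac{1}{M}\int_0^t\abs{\vF(\tau)}e^{-\frac{K_v}{M}(t-\tau)}d\tau\le\left(\abs{\vW(0)}-\frac{C_1}{K_v}\right)e^{-\frac{K_v}{M}t}+\frac{C_1}{K_v}$. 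You instead attempt a direct energy estimate on the forced system, absorbing $\vF$ by Young's inequality.

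That route has a genuine gap at the step ``differentiate the integrated bound to recover a pointwise estimate.'' An upper bound on $\Lambda(t)=\int_{t_0}^t\abs{\vW(\tau)}^2\,d\tau$ controls only the time average of $\abs{\vW}^2$: from $\Lambda(t)\le G(t)$ with $\Lambda(t_0)=G(t_0)$ one cannot conclude $\dot\Lambda(t)\le\dot G(t)$, since $\Lambda$ may oscillate beneath $G$ with sharp spikes in $\dot\Lambda=\abs{\vW}^2$. The only pointwise information your differential inequality $\dot\Lambda\le-\frac{K_v}{M}\Lambda+h(t)$ yields, using $\Lambda\ge0$, is $\abs{\vW(t)}^2\le h(t)=\frac{2E_r(t_0)}{M}+\frac{C_1^2(t-t_0)}{MK_v}$, which grows linearly in $t$. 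The underlying obstruction — which you flag as a ``secondary subtlety'' but do not resolve — is that in the forced system $E_r$ is not non-increasing: your own inequality $\dot E_r\le-\frac{K_v}{2}\abs{\vW}^2+\frac{C_1^2}{2K_v}$ permits $U(\vX)+\frac{K_p}{2}\abs{\vX}^2$ to accumulate linearly in time, and nothing in the energy balance prevents that stored potential energy from later converting into an arbitrarily large kinetic-energy spike; hence the coercivity bound $\frac{M}{2}\abs{\vW(t)}^2\le E_r(t)$ gives no uniform control. To close a purely energetic argument one would need a modified functional with a cross term, e.g.\ $E_r+\epsilon M\,\vX\cdot\vW$, whose derivative contains $-\epsilon K_p\abs{\vX}^2$ and so controls the potential reservoir. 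The Duhamel argument of the paper sidesteps all of this because the damping $-K_v\vW$ acts directly on the velocity component of the propagator. (A minor additional point: your $t_0$ is determined by $E_r$ at the initial time and therefore depends on $\vX(0)$ as well as $\vW(0)$, whereas the statement asserts dependence on $\vW(0)$ only.)
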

\begin{proof}
	Let $\Psi(t, s)$, $t \ge s \ge 0$, be the state-transition matrix of the linear system \eqref{non-diss:eq:dynamics-reduced}.
	By the variation of constants formula, the solution of \eqref{non-diss:eq:dynamics-full} is given by
	\begin{equation}
		\label{non-diss:eq:duhamel}
		(\vX(t), \vW(t)) =
		\Psi(t, 0) (\vX(0), \vW(0))
		+ \int_{0}^{t} \Psi(t, \tau) (\v0, M^{-1}\vF(\tau)) d\tau,
	\end{equation}
	Let $\Psi_{\vW}(t, s)$ be the last $Nd$ rows of be the matrix $\Psi(t, s)$, i.e., $\Psi_{\vW}(t, s)$ is such that the velocity components of the solution of \eqref{non-diss:eq:dynamics-reduced} are given by $\Psi_{\vW}(t, s) (\vX(s), \vW(s))$.
	Then, it follows from \eqref{non-diss:eq:f-bound}, \eqref{non-diss:eq:duhamel}, and Lemma \eqref{non-diss:lemma:w-reduced-decay} that
	\begin{equation*}
		\begin{aligned}
			\abs{\vW(t)} & \le \abs{\Psi_{\vW}(t, 0) (\vX(0), \vW(0))} + \abs{\int_{0}^{t} \Psi_{\vW}(t, \tau)_{\vW} (\v0, M^{-1} \vF(\tau)) d\tau }
			\\
			& \le \abs{\vW(0)} e^{-\frac{K_{v}}{M} t} + \frac{1}{M} \int_{0}^{t} \abs{\vF(\tau)} e^{-\frac{K_{v}}{M} \tau} d\tau
			\\
			& = \left(\abs{\vW(0)} - \frac{C_{1}}{K_{v}} \right) e^{-\frac{K_{v}}{M} t} + \frac{C_{1}}{K_{v}}.
		\end{aligned}
	\end{equation*}
	Since $K_{v} > 0$, the result follows.
\end{proof}

Equipped with the above lemma we are ready to prove that the solutions of the full system \eqref{non-diss:eq:dynamics-full} are bounded.
\begin{theorem}
	\label{non-diss:thm:bdd-sols}
	Let $(\vX(t), \vW(t))$ be a solution of \eqref{non-diss:eq:dynamics-full}.
	Suppose that Assumptions \ref{non-diss:assmp:fl-is-bdd} and \ref{non-diss:assmp:sp-forces-are-approx-linear} hold, and that $\tilde{K}_{v}^{2} - 4\tilde{K}_{p} \ne 0$, where $\tilde{K}_{p} = M^{-1} K_{p}$, $\tilde{K}_{v} = M^{-1} K_{p}$.
	Then, for $i = 1, \ldots, N$ and any $\varepsilon_{1} > 0$, there exists $t_{1} = t_{1}(\vx_{i}(0), \vw_{i}(0), \varepsilon_{1}) \ge 0$ such that
	\begin{enumerate}[(i)]
		\item $\abs{\vx_{i}(t)} \le \varepsilon_{1} + \frac{2 \tilde{K_{v}}}{\tilde{K_{p}} \sqrt{\abs{\tilde{K_{v}}^{2} - 4\tilde{K}_{p}}}} C_{2}$;
		\item $\abs{\vw_{i}(t)} \le \varepsilon_{1} + \frac{2}{\sqrt{\abs{\tilde{K}_{v}^{2} - 4\tilde{K}_{p}}}} C_{2}$;
	\end{enumerate}
	for all $t \ge t_{1}$, where
	\begin{equation*}
		C_{2} = \frac{1}{M} \left[ (N-1)^{\frac{1}{2}} A r_{C} + (N^{2}-N)^{\frac{1}{2}} \frac{B C_{1}}{K_{v}}  + N^{-\frac{1}{2}}C_{1} \right].
	\end{equation*}
\end{theorem}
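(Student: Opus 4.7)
The natural approach is to recast the motion of each individual agent as a forced damped linear oscillator and then apply variation of parameters. Isolating the $-K_p \vx_i$ and $-K_v \vw_i$ terms in \eqref{non-diss:eq:dynamics-full}, one obtains, for each $i$,
\begin{equation*}
  M\ddot{\vx}_i + K_v \dot{\vx}_i + K_p \vx_i = \v{R}_i(t),
\end{equation*}
where $\v{R}_i(t)$ collects the ambient conservative force $\vf_i^C(\vX(t))$, the $i$-th block $-\sum_{j\ne i} B w_D(\abs{\vx_{ij}})\vw_{ij}$ of $-L(\vX)\vW$, and the $i$-th block of $\vF(t)$. Showing that $|\v{R}_i(t)|$ is uniformly bounded for all sufficiently large $t$ reduces the theorem to a standard linear-ODE estimate on a second-order scalar equation.

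To bound $\v{R}_i$, three ingredients are combined. First, the compact support of $w_C$ yields $|\vf_{ij}^C| \le A r_C$, so the conservative sum is uniformly bounded; a Cauchy--Schwarz regrouping over neighbours produces the coefficient $(N-1)^{1/2} A r_C$. Second, Lemma \ref{non-diss:lemma:w-full-bdd} supplies a time $t_0$ such that $|\vW(t)| \le C_1/K_v + \varepsilon_0$ for all $t \ge t_0$; this controls every difference $\vw_{ij}$, and a second Cauchy--Schwarz step over the $N(N-1)$ ordered pairs delivers the dissipative contribution $(N^2-N)^{1/2} B C_1/K_v$. Third, Assumptions \ref{non-diss:assmp:fl-is-bdd} and \ref{non-diss:assmp:sp-forces-are-approx-linear} give $|\vF(t)| \le C_1$, and redistributing across the $N$ blocks accounts for the remaining $N^{-1/2} C_1$ term. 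Combining the three yields $|\v{R}_i(t)|/M \le C_2 + \varepsilon$ for $t \ge t_0$.

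Once this forcing bound is in hand, the oscillator for agent $i$ is rewritten as a planar system $\dot{\vy}_i = \mathcal{A}\vy_i + (\vzero, \v{R}_i/M)$ with $\vy_i = (\vx_i, \vw_i)$, whose coefficient matrix has characteristic polynomial $\lambda^2 + \tilde{K}_v \lambda + \tilde{K}_p$. Under the hypothesis $\tilde{K}_v^2 - 4\tilde{K}_p \ne 0$, the two roots $\lambda_{1,2}$ are distinct, both have real part $-\tilde{K}_v/2 < 0$, and $|\lambda_1 - \lambda_2| = \sqrt{|\tilde{K}_v^2 - 4\tilde{K}_p|}$. Duhamel's formula gives
\begin{equation*}
  \vy_i(t) = e^{(t-t_0)\mathcal{A}}\vy_i(t_0) + \int_{t_0}^{t} e^{(t-\tau)\mathcal{A}}\bigl(\vzero, \v{R}_i(\tau)/M\bigr)\, d\tau.
\end{equation*}
Using the explicit entries $[e^{s\mathcal{A}}]_{12} = (e^{\lambda_1 s} - e^{\lambda_2 s})/(\lambda_1 - \lambda_2)$ and $[e^{s\mathcal{A}}]_{22} = (\lambda_1 e^{\lambda_1 s} - \lambda_2 e^{\lambda_2 s})/(\lambda_1 - \lambda_2)$, together with the envelope estimate $|e^{\lambda_k s}| = e^{-\tilde{K}_v s/2}$ to handle the over-damped and under-damped cases uniformly, one evaluates the required $L^1([0,\infty))$ norms of the Green's functions; this produces the coefficients $2\tilde{K}_v/\bigl(\tilde{K}_p\sqrt{|\tilde{K}_v^2 - 4\tilde{K}_p|}\bigr)$ and $2/\sqrt{|\tilde{K}_v^2 - 4\tilde{K}_p|}$ appearing in (i) and (ii). The transient $|e^{(t-t_0)\mathcal{A}}\vy_i(t_0)|$ decays exponentially in $t-t_0$ and is absorbed into $\varepsilon_1$ by choosing $t_1 = t_1(\vx_i(0),\vw_i(0),\varepsilon_1) \ge t_0$ large enough.

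The delicate step is the second one: the forcing $\v{R}_i$ depends on the full state, so its bound is not a priori available and must be bootstrapped from Lemma \ref{non-diss:lemma:w-full-bdd}. This dictates the ordering of the argument---first control $|\vW|$ globally in order to freeze the forcing, then apply the Duhamel estimate on the shifted interval $[t_0,\infty)$. With this bootstrap in place, the remaining linear-ODE calculation is routine.
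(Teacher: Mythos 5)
Your proposal follows essentially the same route as the paper's proof: decompose the dynamics into per-agent damped linear oscillators forced by $\vH_{i} = \vf_{i}^{C} + \vf_{i}^{D} + \vF_{i}$, bound that forcing for $t \ge t_{0}$ via Lemma \ref{non-diss:lemma:w-full-bdd} and the compact supports of the weight functions to obtain $C_{2}$, and then apply Duhamel's formula with the eigenvalues $\lambda_{1,2} = \bigl(-\tilde{K}_{v} \pm \sqrt{\tilde{K}_{v}^{2} - 4\tilde{K}_{p}}\bigr)/2$, absorbing the exponentially decaying transient into $\varepsilon_{1}$. The one imprecision is your claim that both roots have real part $-\tilde{K}_{v}/2$ and that $\abs{e^{\lambda_{k}s}} = e^{-\tilde{K}_{v}s/2}$ uniformly --- this is true only in the under-damped case; in the over-damped case the roots are real with distinct real parts, but there the Green's functions integrate exactly and the stated coefficients still follow, so the argument matches the paper's.
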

\begin{proof}
	Write \eqref{non-diss:eq:dynamics-full} as
	\begin{equation}
		\label{non-diss:eq:bdd-sols-dynamics}
		\begin{aligned}
			\dot{\vx}_{i} & = \vw_{i},
			\\
			M \dot{\vw}_{i} & = - K_{p} \vx_{i} - K_{v} \vw_{i} + \vH_{i}(t),
		\end{aligned}
		\quad
		i = 1, \ldots, N,
	\end{equation}
	where 
	\begin{equation*}
		\v{H}_{i}(t) = \vf^{C}_{i} + \vf^{D}_{i} + \vF_{i}(t).
	\end{equation*}
	
	Let $\varepsilon_{0} > 0$.
	It follows from Lemma \ref{non-diss:lemma:w-full-bdd} that there exists $t_{0} = t_{0}(\vW(0), \varepsilon_{0}) \ge 0$ such that 
	\begin{equation*}
		\abs{\v{H}_{i}(t)}
		=
		\abs{
			A \sum_{j \ne i} w_{C} \left(\abs{\vx_{ij}}\right) \vx_{ij} 
			- B \sum_{j \ne i} w_{D} (\abs{\vx_{ij}}) \vw_{ij}
			+ \vF_{i}(t)
		}
		\le C_{H} (\varepsilon_{0}),
	\end{equation*}
	for $i = 1, \ldots, N$ and all $t \ge t_{0}$, where
	\begin{equation}
		\label{non-diss:eq:c-h}
		C_{H} (\varepsilon_{0})
		=
		(N-1)^{\frac{1}{2}} A r_{C} + (N^{2}-N)^{\frac{1}{2}} B \left(\varepsilon_{0} + \frac{C_{1}}{K_{v}}\right)  + N^{-\frac{1}{2}}C_{1}.
	\end{equation}
	
	Viewing \eqref{non-diss:eq:bdd-sols-dynamics} as a linear non-homogeneous system, for given $i \in \{1, \ldots, N\}$, we have
	\begin{eqnarray}
		\label{non-diss:eq:bdd-sols-duhamel-pos}
		\vx_{i}(t) & = & \frac{e^{\lambda_{1}(t - t_{0})}}{\lambda_{2} - \lambda_{1}} \left[\lambda_{2} \vx_{i}(t_{0}) - \vw_{i}(t_{0})\right] +
		\frac{e^{\lambda_{2} (t - t_{0})}}{\lambda_{2} - \lambda_{1}} \left[ -\lambda_{1}  \vx_{i}(t_{0}) +  \vw_{i}(t_{0}) \right] 
		\nonumber \\
		& + &  \frac{1}{\lambda_{2} - \lambda_{1}} \int_{t_{0}}^{t} \left[ - e^{\lambda_{1}(t-\tau)} + e^{\lambda_{2}(t-\tau)} \right] M^{-1} \vH_{i}(\tau)d\tau, 
		\\
		\label{non-diss:eq:bdd-sols-duhamel-vel}
		\vw_{i}(t) & = &  \frac{e^{\lambda_{1}(t - t_{0})}}{\lambda_{2} - \lambda_{1}} \left[\lambda_{1}\lambda_{2} \vx_{i}(t_{0}) - \lambda_{1} \vw_{i}(t_{0})\right] +
		\frac{e^{\lambda_{2}(t - t_{0})}}{\lambda_{2} - \lambda_{1}} \left[-\lambda_{1}\lambda_{2} \vx_{i}(t_{0}) + \lambda_{2}\vw_{i}(t_{0}) \right] 
		\nonumber \\
		& + &  \frac{1}{\lambda_{2} - \lambda_{1}}  \int_{0}^{t} \left[ -\lambda_{1} e^{\lambda_{1}(t-\tau)} + \lambda_{2} e^{\lambda_{2}(t-\tau)} \right] M^{-1} \vH_{i}(\tau) d\tau, 
	\end{eqnarray}
	where
	\begin{equation}
		\label{non-diss:eq:bdd-sols-evalues}
		\lambda_{1, 2}=\frac{-\tilde{K_v}\pm \sqrt{ \tilde{K_v}^2-4 \tilde{K_p}}}{2}.
	\end{equation}
	
	Since the real parts of $\lambda_{1, 2}$ are negative, first two terms in \eqref{non-diss:eq:bdd-sols-duhamel-pos} and \eqref{non-diss:eq:bdd-sols-duhamel-vel} vanish as $t \to \infty$, and therefore the asymptotic behavior of $\vx_{i}(t)$ and $\vw_{i}(t)$ is determined by the corresponding integral terms.
	
	For $t \ge t_{0}$, for the integral term in \eqref{non-diss:eq:bdd-sols-duhamel-pos}, we have
	\begin{equation*}
		\abs{ 
			\frac{1}{\lambda_{2} - \lambda_{1}}  
			\int_{t_{0}}^{t} \left[ - e^{\lambda_{1}(t-\tau)} + e^{\lambda_{2}(t-\tau)} \right] M^{-1} \v{H}_{i}(\tau)d\tau 
		}
		\le 
		\abs{\frac{2(\lambda_{1} + \lambda_{2})}{(\lambda_{2} - \lambda_{1}) \lambda_{1} \lambda_{2}}} M^{-1} C_{H}(\varepsilon_{0}),
	\end{equation*}
	and for the integral term in \eqref{non-diss:eq:bdd-sols-duhamel-vel}, we have
	\begin{equation*}
		\abs{ 
			\frac{1}{\lambda_{2} - \lambda_{1}}  
			\int_{t_{0}}^{t} \left[ -\lambda_{1} e^{\lambda_{1}(t-\tau)} + \lambda_{2} e^{\lambda_{2}(t-\tau)} \right] M^{-1} \v{H}_{i}(\tau)d\tau 
		}
		\le 
		\frac{2}{\abs{\lambda_{2} - \lambda_{1}}} M^{-1} C_{H}(\varepsilon_{0}).
	\end{equation*}
	Then (i) and (ii) follow.
\end{proof}

\begin{remark}
	\normalfont
	Let $\gamma = \frac{4 \tilde{K}_{p}}{\tilde{K}_{v}^{2}}$.
	It follows from Theorem \ref{non-diss:thm:bdd-sols} that the coordinates of the $i$-th agent satisfy
	\begin{equation}
		\label{non-diss:eq:bdd-sols-remark-bound}
		\abs{\vx_{i}(t)} \le \frac{2}{\tilde{K}_{p} \sqrt{\abs{1 - \gamma}}} C_{2} \nu ,
		\qquad
		\abs{\vw_{i}(t)} \le \frac{2}{\tilde{K}_{v} \sqrt{\abs{1 - \gamma}}} C_{2} \nu,
	\end{equation}
	for all $t \ge 0$, where $\nu > 1$ depends on $\vx_{i}(0)$ and $\vw_{i}(0)$ and does not depend on time.
	By tuning the parameters $K_{p}$ and $K_{v}$, one can control the deviation of the agent's trajectory from that of the virtual leader.
	In particular, if $K_{p} \gg K_{v}$, so that $\gamma > 1$ and the eigenvalues \eqref{non-diss:eq:bdd-sols-evalues} become complex, the dynamics of the agents will be more oscillatory. 
	At the same time, higher values of $K_{p}$ will ensure a tighter bound on the agent's position.
	In turn, if $K_{p} \ll K_{v}$, we get $\gamma < 1$, which makes the eigenvalues \eqref{non-diss:eq:bdd-sols-evalues} real.
	Such a scenario will represent an ``overdamped'' system with fewer oscillations and shorter stabilization time.
	Notably, for $\gamma$ being close to 1, the bounds in \eqref{non-diss:eq:bdd-sols-remark-bound} become small, and therefore it is possible to ensure a tight formation control of the group while using relatively small values of $K_{p}$ and $K_{v}$.
	The case when $\gamma = 1$ is discussed in the theorem that follows.
\end{remark}

For the case of repeated eigenvalue in \eqref{non-diss:eq:bdd-sols-dynamics}, we can get a stronger result than that of Theorem \ref{non-diss:thm:bdd-sols}.
\begin{theorem}
	\label{non-diss:thm:bdd-sols-repeated}
	Let $(\vX(t), \vW(t))$ be a solution of \eqref{non-diss:eq:dynamics-full}.
	Suppose that Assumptions \ref{non-diss:assmp:fl-is-bdd} and \ref{non-diss:assmp:sp-forces-are-approx-linear} hold, and that $\tilde{K}_{v}^{2} - 4\tilde{K}_{p} = 0$.
	Then, for $i = 1, \ldots, N$,
	\begin{enumerate}[(i)]
		\item for any $\varepsilon_{1} > 0$, there exists $t_{1} = t_{1}(\vx_{i}(0), \vw_{i}(0), \varepsilon_{1}) \ge 0$ such that $\abs{\vx_{i}(t)} \le \varepsilon_{1} + \frac{4}{\tilde{K_{v}}}C_{2}$ for all $t \ge t_{1}$;
		\item $\abs{\vw_{i}(t)} \to 0$ as $t \to \infty$.
	\end{enumerate}
\end{theorem}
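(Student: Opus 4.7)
The strategy is to run the same per-agent decoupled analysis as in Theorem~\ref{non-diss:thm:bdd-sols}: viewing \eqref{non-diss:eq:bdd-sols-dynamics} as a scalar linear inhomogeneous ODE with forcing $\vH_i(t)$, and using Lemma~\ref{non-diss:lemma:w-full-bdd} to pick $t_0 \ge 0$ after which $|\vH_i(t)| \le C_H(\varepsilon_0)$ uniformly in $i$, where $\varepsilon_0 > 0$ is an auxiliary parameter to be sent to zero at the end. The new ingredient, relative to Theorem~\ref{non-diss:thm:bdd-sols}, is that the characteristic polynomial now has the double root $\lambda = -\tilde{K}_v/2 < 0$, so the fundamental set is $\{e^{\lambda t}, t e^{\lambda t}\}$ and variation of parameters gives
\begin{equation*}
    \vx_i(t) = (\text{transient})\cdot e^{\lambda(t-t_0)} + \int_{t_0}^{t} (t-\tau) e^{\lambda(t-\tau)} M^{-1} \vH_i(\tau)\,d\tau,
\end{equation*}
with $\vw_i(t)$ obtained by differentiation, producing the velocity kernel $(1+\lambda(t-\tau))e^{\lambda(t-\tau)}$. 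The homogeneous parts decay to zero since $|\lambda|>0$ dominates the polynomial prefactor.

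Part~(i) is then a routine computation: bound the inhomogeneous integral using $|\vH_i(\tau)| \le C_H(\varepsilon_0)$ together with $\int_0^\infty s e^{\lambda s}\,ds = 1/\lambda^2$, send $\varepsilon_0 \to 0$ so that $M^{-1}C_H(\varepsilon_0)\to C_2$, and absorb any leftover transient into $\varepsilon_1$ by taking $t_1$ large enough. This mirrors Theorem~\ref{non-diss:thm:bdd-sols} step for step; only the Green's function changes.

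For part~(ii) I would introduce the auxiliary variable $\vy_i = \vw_i - \lambda \vx_i$, which satisfies the first-order equation $M\dot{\vy}_i = M\lambda \vy_i + \vH_i$, yielding a uniform asymptotic bound on $|\vy_i|$ by a direct Gronwall-type estimate. Substituting back into $\vw_i = \vy_i + \lambda \vx_i$ reduces the problem to the decay of this particular combination, which is where the critically damped structure is essential: the antiderivative of the velocity kernel $(1+\lambda s)e^{\lambda s}$ equals $s e^{\lambda s}$, and vanishes both at $s=0$ and as $s\to\infty$. The main obstacle is turning this structural fact into decay of $|\vw_i|$, because a triangle-inequality estimate fails — the kernel has strictly positive $L^1$ norm, so any bound based on $\sup_\tau |\vH_i(\tau)|$ alone cannot vanish. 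The decay must therefore come from cancellation at the sign change $s=-1/\lambda$, which I would extract by integration by parts in the convolution integral for $\vw_i$, transferring the decay of the antiderivative $s e^{\lambda s}$ onto the time argument of $\vH_i$ and exploiting the state-dependent structure of $\vH_i = \vf_i^C + \vf_i^D + \vF_i$ along the already-bounded trajectory produced by~(i).
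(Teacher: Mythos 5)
Your part (i) coincides with the paper's proof: variation of constants with the double root $\lambda=-\tilde{K}_v/2$, the position kernel $(t-\tau)e^{\lambda(t-\tau)}$ is nonnegative, and the bound follows from $\int_0^\infty s e^{\lambda s}\,ds=1/\lambda^2$ plus absorbing the transient into $\varepsilon_1$. Nothing to add there.

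For part (ii), your diagnosis of the difficulty is exactly right, and it is worth saying plainly that it applies to the paper's own proof. The paper asserts
\begin{equation*}
	\abs{\int_{t_{0}}^{t} e^{\lambda(t - \tau)} \left[\lambda(t - \tau) + 1\right] M^{-1} \vH_{i}(\tau)\,d\tau}
	\le e^{\lambda (t - t_{0})} (t - t_{0})\, M^{-1} C_{H}(\varepsilon_{0}),
\end{equation*}
which is obtained by bounding the convolution by $\sup\abs{\vH_i}$ times the \emph{signed} integral of the kernel, $\int_0^{T}(1+\lambda s)e^{\lambda s}\,ds = Te^{\lambda T}$. Since the kernel changes sign at $s=-1/\lambda$, this is not a legitimate triangle-inequality step: $\int_0^\infty\abs{(1+\lambda s)e^{\lambda s}}\,ds = 2/(e\abs{\lambda})>0$, so the correct worst-case bound does not tend to zero --- precisely the obstruction you identify. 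You have therefore found a genuine flaw in the published argument rather than merely deviated from it.

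However, your proposed repair does not close the gap. Integration by parts with $G(s)=se^{\lambda s}$ yields $G(t-t_0)\vH_i(t_0)+\int_{t_0}^{t}G(t-\tau)\dot{\vH}_i(\tau)\,d\tau$; the boundary term decays, but the remaining integral is only $O(\sup\abs{\dot{\vH}_i}/\lambda^2)$, which is bounded rather than vanishing, and $\dot{\vH}_i$ contains $\dot{\vf}^{L}$, which is not assumed bounded at all. In fact no argument can succeed under the stated hypotheses: take $N=1$ (no ambient forces), exactly linear alignment forces ($\delta\vu^{P}=\delta\vu^{V}=\vzero$), and $\vf^{L}(t)=\v{a}\cos(\omega t)$, which is $C^1$ and bounded, so Assumptions 1 and 2 hold. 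The forced response of $\ddot{\vx}+\tilde{K}_v\dot{\vx}+\tilde{K}_p\vx=-\v{a}\cos(\omega t)$ has velocity amplitude $\abs{\v{a}}\,\omega/(\omega^2+\lambda^2)\ne 0$, so $\abs{\vw(t)}\not\to 0$. Conclusion (ii) therefore requires an additional hypothesis (for instance, $\vH_i$ converging to a constant, which would restore the cancellation you are after); as matters stand, both your sketch and the paper's proof are incomplete on this point, and the statement itself appears to be false in the generality claimed.
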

\begin{proof}
	The proof is analogous to that of Theorem \ref{non-diss:thm:bdd-sols}. 
	As before, consider the linear non-homogeneous system \eqref{non-diss:eq:bdd-sols-dynamics}. 
	When $\tilde{K}_{v}^{2} - 4\tilde{K}_{p} = 0$, the corresponding linear homogeneous system has a single repeated eigenvalue $\lambda = - \frac{\tilde{K_{v}}}{2}$.
	
	Fix $\varepsilon_{0} > 0$. For given $i \in \{1, \ldots, N\}$, we have
	\begin{eqnarray}
		\label{non-diss:eq:bdd-sols-duhamel-pos-repeated}
		\vx_{i}(t) & = & e^{\lambda (t - t_{0})}\left[ \left(-\lambda(t-t_{0}) + 1 \right) \vx_{i}(t_{0}) + (t - t_{0}) \vw_{i}(t_{0}) \right]
		\nonumber \\
		& + &  \int_{t_{0}}^{t} e^{\lambda(t - \tau)} (t - \tau) M^{-1} \vH_{i}(\tau)d\tau, 
		\\
		\label{non-diss:eq:bdd-sols-duhamel-vel-repeated}
		\vw_{i}(t) & = &  e^{\lambda (t - t_{0})}\left[ -\lambda^{2}(t-t_{0}) \vx_{i}(t_{0}) + \left(\lambda (t - t_{0}) + 1\right) \vw_{i}(t_{0}) \right]
		\nonumber \\
		& + &  \int_{t_{0}}^{t} e^{\lambda(t - \tau)} \left[\lambda(t - \tau) + 1\right] M^{-1} \vH_{i}(\tau)d\tau,
	\end{eqnarray}
	where $t_{0} = t_{0}(\vW(0), \varepsilon_{0}) \ge 0$ is such that $\abs{\v{H}_{i}(t)} \le C_{H} (\varepsilon_{0})$, for all $i = 1, \ldots, N$ and all $t \ge t_{0}$, with $C_{H} (\varepsilon_{0})$ is as defined by \eqref{non-diss:eq:c-h}.
	
	Since $\lambda < 0$, the first terms in \eqref{non-diss:eq:bdd-sols-duhamel-pos-repeated} and \eqref{non-diss:eq:bdd-sols-duhamel-vel-repeated} vanish as $t \to \infty$, and therefore the asymptotic behavior of $\vx_{i}(t)$ and $\vw_{i}(t)$ is determined by the corresponding integral terms.
	
	For $t \ge t_{0}$, for the integral term in \eqref{non-diss:eq:bdd-sols-duhamel-pos}, we have
	\begin{equation*}
		\abs{ 
			\int_{t_{0}}^{t} e^{\lambda(t - \tau)} (t - \tau) M^{-1} \vH_{i}(\tau)d\tau
		}
		\le 
		\left[ e^{\lambda (t - t_{0})} \left(\frac{1}{\lambda} (t - t_{0}) - \frac{1}{\lambda^{2}}\right) + \frac{1}{\lambda^{2}}\right] M^{-1} C_{H}(\varepsilon_{0}) 
	\end{equation*}
	and for the integral term in \eqref{non-diss:eq:bdd-sols-duhamel-vel}, we have
	\begin{equation*}
		\abs{ 
			\int_{t_{0}}^{t}  e^{\lambda(t - \tau)} \left[\lambda(t - \tau) + 1\right] M^{-1} \vH_{i}(\tau)d\tau
		}
		\le 
		e^{\lambda (t - t_{0})} (t - t_{0}) M^{-1} C_{H}(\varepsilon_{0}).
	\end{equation*}
	Then (i) and (ii) follow.
\end{proof}

The results of the above theorems are summarized in the following corollary.
\begin{corollary}
	Suppose that Assumptions \ref{non-diss:assmp:fl-is-bdd} and \ref{non-diss:assmp:sp-forces-are-approx-linear} hold. 
	Then the group of agents exhibits approximate flocking.
	If, in addition, $\tilde{K}_{v}^{2} - 4\tilde{K}_{p} = 0$, the flocking is proper and exact.
\end{corollary}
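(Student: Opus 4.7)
The corollary is essentially a bookkeeping consequence of Theorems \ref{non-diss:thm:bdd-sols} and \ref{non-diss:thm:bdd-sols-repeated}, so the plan is simply to translate the uniform bounds they provide into the quantifiers appearing in Definition \ref{diss:def:flocking}, keeping in mind that the moving-frame coordinates $\vx_{i} = \vq_{i}-\vq_{l}$ and $\vw_{i} = \vv_{i}-\vv_{l}$ are exactly the quantities bounded by those theorems.

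First I would handle the generic case $\tilde{K}_{v}^{2} - 4\tilde{K}_{p} \ne 0$. Fix any $\varepsilon_{1} > 0$, say $\varepsilon_{1} = 1$. By Theorem \ref{non-diss:thm:bdd-sols} applied to each agent $i = 1,\ldots,N$, there exist times $t_{1}^{(i)} \ge 0$ beyond which both bounds (i) and (ii) hold. Setting $T = \max_{i} t_{1}^{(i)}$,
\[
R = \varepsilon_{1} + \frac{2\tilde{K}_{v}}{\tilde{K}_{p}\sqrt{\lvert \tilde{K}_{v}^{2}-4\tilde{K}_{p} \rvert}}\,C_{2}, \qquad V = \varepsilon_{1} + \frac{2}{\sqrt{\lvert \tilde{K}_{v}^{2}-4\tilde{K}_{p} \rvert}}\,C_{2},
\]
we obtain $\lvert \vq_{i}(t)-\vq_{l}(t)\rvert = \lvert \vx_{i}(t)\rvert \le R$ and $\lvert \vv_{i}(t)-\vv_{l}(t)\rvert = \lvert \vw_{i}(t)\rvert \le V$ for all $t \ge T$, which is precisely the definition of approximate flocking in part (a). In the repeated-eigenvalue case $\tilde{K}_{v}^{2}-4\tilde{K}_{p}=0$, the same reasoning goes through using the bound (i) in Theorem \ref{non-diss:thm:bdd-sols-repeated} to extract $R$, while the velocity bound $V$ can be chosen as, for example, $V = \sup_{t\ge 0}\lvert \vw_{i}(t) \rvert$, which is finite because $\lvert \vw_{i}(t)\rvert \to 0$ by part (ii) of that theorem.

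For the second assertion, assume $\tilde{K}_{v}^{2}-4\tilde{K}_{p}=0$. Theorem \ref{non-diss:thm:bdd-sols-repeated}(ii) gives $\lvert \vw_{i}(t)\rvert \to 0$ for every $i$, i.e.\ $\lvert \vv_{i}(t)-\vv_{l}(t)\rvert \to 0$, which is exactly condition \eqref{diss:eq:flocking-def-prop-vel} for proper flocking. The intermediate condition \eqref{diss:eq:flocking-def-exact-vel} for exact flocking follows immediately from the triangle inequality $\lvert \vv_{i}(t)-\vv_{j}(t)\rvert \le \lvert \vw_{i}(t)\rvert + \lvert \vw_{j}(t)\rvert \to 0$. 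Combined with the approximate-flocking bounds already established, this yields part (c) of Definition \ref{diss:def:flocking}.

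There is no substantive obstacle here: the real analytical content lives in Theorems \ref{non-diss:thm:bdd-sols} and \ref{non-diss:thm:bdd-sols-repeated} (whose proofs rely on the exponential decay lemma and the variation-of-constants representation), and the corollary just repackages their conclusions in the language of Definition \ref{diss:def:flocking}. The only minor point requiring a bit of care is that Theorem \ref{non-diss:thm:bdd-sols} produces an $\varepsilon_{1}$-dependent threshold time $t_{1}$ for each agent, so one must take a finite maximum over $i = 1,\ldots, N$; this is harmless because $N$ is fixed.
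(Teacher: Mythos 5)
Your proof is correct and follows exactly the route the paper intends: the paper gives no explicit proof, stating the corollary as a direct summary of Theorems \ref{non-diss:thm:bdd-sols} and \ref{non-diss:thm:bdd-sols-repeated}, and your translation of their bounds into Definition \ref{diss:def:flocking} (including the finite maximum over agents and the triangle inequality for exactness) is precisely that summary made explicit.
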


\begin{remark}
	\label{non-diss:rmrk:random}
	\normalfont
	Suppose that one wants to incorporate stochastic effects into the model by adding random forces $\vf_{i}^{R}$, $i = 1, \ldots, N$, to \eqref{model:dynamics}.
	If $\vf_{i}^{R}(t)$, $t \ge 0$, viewed as a realization of some stochastic process, is always $C^{1}$ and bounded, then Theorems \ref{non-diss:thm:bdd-sols} and \ref{non-diss:thm:bdd-sols-repeated} will still hold.
	This can be easily seen from the fact that $\vf^{L}$ was merely required to be bounded (in addition to the initial assumptions of being $C^{1}$) in the proofs of either of the above theorems.
	Thus, if another bounded term $\vf_{i}^{R}$ is added to the right-hand side of \eqref{model:dynamics}, Theorem \ref{non-diss:thm:bdd-sols} will still guarantee that the solutions are (almost surely) bounded.
	Similarly, for the case of repeated eigenvalue, the system will still (almost surely) exhibit exact proper flocking due to Theorem  \ref{non-diss:thm:bdd-sols-repeated}.
\end{remark}

\begin{remark}
	\label{non-diss:rmrk:global}
	\normalfont
	Let $\eta > 0$.
	It follows from Theorem \ref{non-diss:thm:bdd-sols} that, for $\tilde{K}_{v}^{2} - 4 \tilde{K}_{p}^{2} \ne 0$, there exists a set
	\begin{equation*}
		\mathcal{B} = 
		\left\{
			(\vX, \vW) \in \R^{2Nd} : 
			\abs{\vx_{i}} \le \frac{2 (1 + \eta) \tilde{K}_{v}}{\tilde{K_{p}} \sqrt{\abs{\tilde{K}_{v}^{2} - 4 \tilde{K}_{p}^{2}}}} C_{2},
			\;\;
			\abs{\vw_{i}} \le \frac{2 (1 + \eta)}{\sqrt{\abs{\tilde{K}_{v}^{2} - 4 \tilde{K}_{p}^{2}}}} C_{2},
			\;\;
			1 \le i \le N
		\right\},
	\end{equation*} 
	such that, for any bounded set $\mathcal{B}_{0} \subseteq \R^{2Nd}$, an orbit starting in $\mathcal{B}_{0}$ enters $\mathcal{B}$ after some time $t_{1} = t_{1}(\mathcal{B}_{0})$.
	In other words, $\mathcal{B}$ is a global absorbing set for \eqref{diss:dynamics}.
	Since $\mathcal{B}$ is bounded, \eqref{diss:dynamics} possesses a global (compact) attractor (see, e.g., \cite{temam1997infinite}).
	Similarly, for $\tilde{K}_{v}^{2} - 4 \tilde{K}_{p}^{2} = 0$, Theorem \ref{non-diss:thm:bdd-sols-repeated} guarantees the existence of a global bounded absorbing set 
	\begin{equation*}
		\mathcal{B} = 
		\left\{
		(\vX, \vW) \in \R^{2Nd} : 
		\abs{\vx_{i}} \le \frac{4(1 + \eta)}{\tilde{K_{v}}} C_{2},
		\;\;
		\abs{\vw_{i}} \le \eta,
		\;\;
		i = 1, \ldots, N
		\right\}.
	\end{equation*}
	Thus, \eqref{diss:dynamics} possesses a global attractor for any configuration of the control parameters $\tilde{K}_{p}$ and $\tilde{K}_{v}$.
\end{remark}

\subsection{Exact flocking}

Suppose that $\tilde{K}_{v}^{2} - 4\tilde{K}_{p} \ne 0$. To show that the system exhibits exact flocking in this case, introduce a new coordinate system centered at the center of mass of the group $(\bvx, \bvw) = \left(\sum_{i = 1}^{N} \bvx_{i}, \sum_{i = 1}^{N} \bvx_{i} \right)$ by defining
\begin{equation*}
	\hvx_{i} = \vx_{i} - \bvx, 
	\;\;
	\hvw_{i} = \vw_{i} - \bvw,
	\;\;
	i = 1, \ldots, N,
\end{equation*}
and 
\begin{equation*}
	\hvX = (\hvx_{1}, \ldots \hvx_{N}),
	\;\;
	\hvW = (\hvw_{1}, \ldots \hvw_{N}).
\end{equation*}
As before, we have $\hvx_{i} - \hvx_{j} = \vx_{i} - \vx_{j}$ and $\hvw_{i} - \hvw_{j} = \vw_{i} - \vw_{j}$ for $i, j = 1, \ldots, N$.
Then the dynamics of the center of mass is given by
\begin{equation}
	\label{non-diss:eq:dynmaics-com}
	\begin{aligned}
		\dot{\bvx} & = \bvw,
		\\
		M \dot{\bvw} & = \frac{1}{N} \sum_{i = 1}^{N} \vu^{P} (\bvx + \hvx_{i}) + \frac{1}{N} \sum_{i = 1}^{N} \vu^{V} (\bvw + \hvw_{i}) -  M \vf^{L}
	\end{aligned}
\end{equation}
and the dynamics of the agents relative to the center of mass is given by
\begin{equation}
	\label{non-diss:eq:dynmaics-hat}
	\begin{aligned}
		\dot{\hvx}_{i} & = \hvw_{i},
		\\
		M \dot{\hvw}_{i} & 
		= \vf_{i}^{C} + \vf_{i}^{D} 
		+ \vu^{P} (\bvx + \hvx_{i}) + \vu^{V} (\bvw + \hvw_{i})
		\\
		& \phantom{=}- \frac{1}{N} \sum_{j = 1}^{N} \vu^{P} (\bvx + \hvx_{j})
		- \frac{1}{N} \sum_{j = 1}^{N} \vu^{V} (\bvw + \hvw_{j}),
	\end{aligned}
	\;\; i = 1, \ldots, N.
\end{equation}

We now make the assumption that the activation threshold $r_{0}$ of the position alignment force is sufficiently large.
\begin{assumption}
	\label{non-diss:assmp:r0-lb}
	\begin{equation*}
		r_{0} > \frac{2 \tilde{K_{v}}}{\tilde{K_{p}} \sqrt{\abs{\tilde{K_{v}}^{2} - 4\tilde{K}_{p}}}} C_{2}.
	\end{equation*}
\end{assumption}
The above assumption, along with Theorem \ref{non-diss:thm:bdd-sols} (i), imply that there exists $t_{1}$ such that 
\begin{equation}
	\label{non-diss:eq:up-vanish}
	\abs{\vx_{i}(t)} \le r_{0}, 
	\;\;
	i = 1, \ldots, N,
\end{equation}
for all $t \ge t_{1}$, and therefore the position alignment force vanishes for $t \ge t_{1}$.
Suppose that $t_{1}$ is the smallest time such that \eqref{non-diss:eq:up-vanish} holds for all $t \ge t_{1}$.
In this case, we refer to $t_{1}$ as the \textit{spatial stabilization time}.

Let $D \delta \vu^{V}(\vy)$ denote the Jacobian of $\delta \vu^{V}$ at $\vy \in \R^{d}$.
We now impose an additional assumption on the nonlinear term of the velocity alignment force.
\begin{assumption}
	\label{non-diss:assmp:sp-forces-jacobian-is-bdd}
	There exists $C_{\delta V}$ such that $\abs{D \delta \vu^{V} (\vy)} \le C_{\delta V}$ for all $\vy \in \R^{d}$.
\end{assumption}
Note that, whenever Assumptions \ref{non-diss:assmp:fl-is-bdd} and \ref{non-diss:assmp:sp-forces-are-approx-linear} hold, velocities of the agents are bounded, so $\delta \vu^{V}$ takes values from a compact subset of $\R^{d}$. 
Then $\abs{D \delta \vu^{V} (\vw_{i}(t))}$ will be bounded for $i = 1, \ldots, N$ and all $t \ge 0$, provided $\vu^{V}$ is $C^{1}$.

First, consider the dynamics of the agents relative to the center of mass. 
For $t \ge t_{1}$, the second equation in \eqref{non-diss:eq:dynmaics-hat} becomes
\begin{equation*}
	\begin{aligned}
		M \dot{\hvw}_{i} & 
		= \vf_{i}^{C} 
		+ \vf_{i}^{D} 
		- K_{v} \hvw_{i}
		+ \delta \vu^{V} (\bvw + \hvw_{i})
		- \frac{1}{N} \sum_{j = 1}^{N} \delta \vu^{V} (\bvw + \hvw_{j}),
	\end{aligned}
	\quad 
	i = 1, \ldots, N.
\end{equation*}
Using a first-order Taylor expansion of $\delta \vu^{V}$ at $\bvw$, we get
\begin{equation*}
	\begin{aligned}
		M \dot{\hvw}_{i} & 
		= \vf_{i}^{C} + \vf_{i}^{D} 
		- K_{v} \hvw_{i}
		+ D \delta \vu^{V} (\vzeta_{i}) \hvw_{i}
		- \frac{1}{N} \sum_{j = 1}^{N} D \delta \vu^{V} (\vzeta_{j}) \hvw_{j},
	\end{aligned}
	\;\;
	i = 1, \ldots, N,
\end{equation*}
for all $t \ge t_{1}$, where $\vzeta_{i} \in \R^{d}$ belongs to the line segment connecting $\bvw$ and $\hvw_{i}$.
Then, for $t \ge t_{1}$, \eqref{non-diss:eq:dynmaics-hat} can be written as
\begin{equation}
	\label{non-diss:eq:dynamics-hat-noconserve}
	\begin{aligned}
		\dot{\hvX} & = \hvW,
		\\
		M \dot{\hvW} & 
		= - \nabla_{\hvX} U(\hvX) - L(\hvX) \hvW - K_{v} \hvW
		+ \Delta(t) \hvW
	\end{aligned}
\end{equation}
where $\Delta(t)$ is a block matrix with $N \times N$ blocks of size $d \times d$, where the $(i, j)$-th block is given by
\begin{equation*}
	\left[\Delta(t)\right]_{i, j}
	=
	\begin{cases}
		(1 - \frac{1}{N}) D \delta \vu^{V} (\vzeta_{i}(t)), & \text{if} \ i = j, 
		\\
		\frac{1}{N} D \delta \vu^{V} (\vzeta_{j}(t)), & \text{otherwise}.
	\end{cases} 
\end{equation*}

Note that Assumption \ref{non-diss:assmp:sp-forces-jacobian-is-bdd} implies that
\begin{equation*}
	\abs{\Delta(t)} \le C_{3}
\end{equation*}
for all $t \ge t_{1}$, where
\begin{equation*}
	C_{3}  = \left[2(N-1)\right]^{\frac{1}{2}} C_{\delta V}.
\end{equation*}
	
As before, for the reduced system
\begin{equation}
	\label{non-diss:eq:dynamics-hat-noconserve-reduced}
	\begin{aligned}
		\dot{\hvX} & = \hvW,
		\\
		M \dot{\hvW} & 
		= - \nabla_{\hvX} U(\hvX) - L(\hvX) \hvW - K_{v} \hvW,
	\end{aligned}
\end{equation}
we can show that the velocities decay exponentially.
\begin{lemma}
	\label{non-diss:lemma:w-hat-reduced-decay}
	Let $(\hvX(t), \hvW(t))$ be a solution of the reduced system \eqref{non-diss:eq:dynamics-hat-noconserve-reduced}.
	Then 
	\begin{equation*}
		\babs{\hvW(t)} \le \babs{\hvW(t_{1})} e^{-\frac{K_{v}}{M}(t-t_{1})}
	\end{equation*}
	for all $t \ge t_{1}$.	
\end{lemma}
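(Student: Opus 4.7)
The plan is to follow the proof of Lemma \ref{non-diss:lemma:w-reduced-decay} almost verbatim, with the key simplification that for $t \ge t_1$ the position alignment force vanishes, so the natural Lyapunov candidate no longer carries a $\tfrac{1}{2}K_p |\hat{\vX}|^2$ term. First I would introduce the mechanical energy
\begin{equation*}
	\hat{E}(\hvX, \hvW) = \tfrac{1}{2} M \babs{\hvW}^{2} + U(\hvX),
\end{equation*}
and verify that $U(\hvX)$ is still a valid potential for the ambient conservative forces in center-of-mass coordinates: since $\vf_{ij}^{C}$ depends only on $\vq_{i} - \vq_{j} = \hvx_{i} - \hvx_{j}$, the shift $\vX \mapsto \hvX$ leaves $U$ unchanged and the identity $-\nabla_{\hvX} U(\hvX) = \sum_{j\ne i} \vf_{ij}^{C}$ continues to hold.

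Next, differentiate $\hat{E}$ along solutions of \eqref{non-diss:eq:dynamics-hat-noconserve-reduced}. The term $\nabla_{\hvX} U(\hvX) \cdot \hvW$ produced by $\tfrac{d}{dt}U(\hvX)$ cancels the conservative term arising from $M\dot{\hvW}^T \hvW$, leaving
\begin{equation*}
	\frac{d}{dt}\hat{E}(\hvX(t), \hvW(t))
	= -\hvW^{T} L(\hvX)\hvW - K_{v}\babs{\hvW}^{2}
	\le -K_{v}\babs{\hvW}^{2},
\end{equation*}
where the inequality uses the positive semi-definiteness of $L(\hvX)$.

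Then I would repeat the Gronwall-type argument of Lemma \ref{non-diss:lemma:w-reduced-decay}. Integrating the energy inequality from $t_{1}$ to $t$ and using $\tfrac{1}{2}M\abs{\hvW(t)}^{2} \le \hat{E}(\hvX(t),\hvW(t))$ gives
\begin{equation*}
	\babs{\hvW(t)}^{2} + \frac{2K_{v}}{M}\int_{t_{1}}^{t}\babs{\hvW(\tau)}^{2}\,d\tau \le \frac{2 \hat{E}(t_{1})}{M},
\end{equation*}
with $\hat{E}(t_{1}) = \hat{E}(\hvX(t_{1}), \hvW(t_{1}))$. Setting $\Lambda(t) = \int_{t_{1}}^{t}\babs{\hvW(\tau)}^{2}\,d\tau$, I multiply the resulting differential inequality $\Lambda'(t) + \tfrac{2K_{v}}{M}\Lambda(t) \le \tfrac{2\hat{E}(t_{1})}{M}$ by the integrating factor $e^{2K_{v}(t-t_{1})/M}$, integrate from $t_{1}$, and substitute back into the pointwise energy bound. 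This yields $\babs{\hvW(t)}^{2} \le \tfrac{2\hat{E}(t_{1})}{M}e^{-2K_{v}(t-t_{1})/M}$, which converts to the stated inequality by the same estimate used to close the proof of Lemma \ref{non-diss:lemma:w-reduced-decay}.

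There is no real obstacle here beyond the bookkeeping: the absence of a position alignment potential makes the argument, if anything, cleaner than in Lemma \ref{non-diss:lemma:w-reduced-decay}, since the scalar contribution that would otherwise come from $\tfrac{1}{2}K_{p}\abs{\hvX}^{2}$ is simply absent. The one step that warrants care is the verification that the conservative term in the equation for $\hvW$ really is $-\nabla_{\hvX}U(\hvX)$ after passing to center-of-mass coordinates; once that is observed the remainder is a direct transcription of the earlier argument.
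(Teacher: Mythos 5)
Your proposal matches the paper's proof, which simply states that the argument of Lemma \ref{non-diss:lemma:w-reduced-decay} goes through verbatim once the energy \eqref{non-diss:eq:energy-reduced-def} is replaced by $E_{r}(\hvX, \hvW) = \frac{1}{2} M \babs{\hvW}^{2} + U(\hvX)$; your verification that $U$ is translation-invariant and your repetition of the Gr\"onwall-type argument are exactly the intended steps. No substantive difference from the paper's route.
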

\begin{proof}
	The proof is identical to that of Lemma \ref{non-diss:lemma:w-reduced-decay} if one replaces \eqref{non-diss:eq:energy-reduced-def} with
	\begin{equation*}
		E_{r}(\hvX, \hvW) = \frac{1}{2} M \babs{\hvW}^{2} + \vU(\hvX).
	\end{equation*}
\end{proof}

We can now prove another key result of this section.
\begin{theorem}
	\label{non-diss:thm:flocking}
	Let $(\hvX(t), \hvW(t))$ be a solution of \eqref{non-diss:eq:dynamics-hat-noconserve}.
	Suppose that Assumption \ref{non-diss:assmp:sp-forces-jacobian-is-bdd} hold.
	Then, for $i = 1, \ldots, N$, 
	\begin{enumerate}[(i)]
		\item $\abs{\hvx_{i}(t) - \hvx_{i}(t_{1})} \le \babs{\hvW(t_{1})} \left[ \frac{M}{2\hat{K}} \left(1 - e^{-\frac{2\hat{K}}{M} (t - t_{1})}\right) \right]^{\frac{1}{2}}$;
		\item $\abs{\hvw_{i}(t)} \le \babs{\hvW(t_{1})} e^{-\frac{\hat{K}}{M}(t-t_{1})}$;
	\end{enumerate}
	for all $t \ge t_{1}$, where 
	\begin{equation*}
		\hat{K} = K_{v} - C_{3}.
	\end{equation*}
\end{theorem}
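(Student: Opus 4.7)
The plan is to extend Lemma \ref{non-diss:lemma:w-hat-reduced-decay} to the perturbed system \eqref{non-diss:eq:dynamics-hat-noconserve}, which differs from the reduced system \eqref{non-diss:eq:dynamics-hat-noconserve-reduced} only by the additional term $\Delta(t)\hvW$. I would use the same Lyapunov-like function $E_{r}(\hvX,\hvW) = \tfrac{1}{2}M|\hvW|^{2} + U(\hvX)$ from the proof of that lemma. Differentiating along solutions of \eqref{non-diss:eq:dynamics-hat-noconserve}, the term $\nabla_{\hvX}U\cdot\hvW$ cancels against $dU/dt$, yielding
\begin{equation*}
\frac{d}{dt}E_{r}(\hvX(t),\hvW(t)) = -\hvW^{T}L(\hvX)\hvW - K_{v}|\hvW|^{2} + \hvW^{T}\Delta(t)\hvW.
\end{equation*}
Positive semi-definiteness of the graph Laplacian $L(\hvX)$ discards the first term. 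For the third term, the block structure of $\Delta(t)$ together with Assumption \ref{non-diss:assmp:sp-forces-jacobian-is-bdd} gives the operator-norm bound $|\Delta(t)| \le C_{3}$, whence $\hvW^{T}\Delta(t)\hvW \le C_{3}|\hvW|^{2}$. Combining these estimates produces $\frac{d}{dt}E_{r} \le -\hat{K}|\hvW|^{2}$ with $\hat{K} = K_{v} - C_{3}$ (assumed positive for the bound to be meaningful).

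From here I would replay the Gronwall argument of Lemma \ref{non-diss:lemma:w-hat-reduced-decay} verbatim with $K_{v}$ replaced by $\hat{K}$: set $\Lambda(t) = \int_{t_{1}}^{t}|\hvW(\tau)|^{2}\,d\tau$, multiply the resulting integral inequality by $e^{2\hat{K}(t-t_{1})/M}$, integrate on $[t_{1},t]$, and then differentiate. This yields both the pointwise decay $|\hvW(t)|^{2} \le |\hvW(t_{1})|^{2}\,e^{-2\hat{K}(t-t_{1})/M}$, from which (ii) follows via $|\hvw_{i}| \le |\hvW|$, and the $L^{2}$-in-time estimate
\begin{equation*}
\int_{t_{1}}^{t}|\hvW(\tau)|^{2}\,d\tau \le |\hvW(t_{1})|^{2}\cdot\frac{M}{2\hat{K}}\bigl(1 - e^{-2\hat{K}(t-t_{1})/M}\bigr).
\end{equation*}
For (i) I would start from $\hvx_{i}(t) - \hvx_{i}(t_{1}) = \int_{t_{1}}^{t}\hvw_{i}(\tau)\,d\tau$ and apply Cauchy--Schwarz together with the above $L^{2}$ estimate, using $|\hvw_{i}(\tau)| \le |\hvW(\tau)|$.

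The principal obstacle is the sign-indefinite perturbation term $\hvW^{T}\Delta(t)\hvW$: without a uniform bound on $|\Delta(t)|$, the energy could grow and the whole Lyapunov argument would collapse. Controlling it cleanly requires exactly Assumption \ref{non-diss:assmp:sp-forces-jacobian-is-bdd}, which supplies a uniform Jacobian bound that propagates through the block structure of $\Delta(t)$ into the explicit constant $C_{3} = \sqrt{2(N-1)}\,C_{\delta V}$. Once the quadratic form is absorbed into the $-K_{v}|\hvW|^{2}$ term as $-\hat{K}|\hvW|^{2}$, the remainder of the argument is a routine adaptation of the reduced-system proof; note that the hypothesis $\hvW^{T}L(\hvX)\hvW \ge 0$ plays no essential role beyond allowing us to discard the dissipative graph contribution.
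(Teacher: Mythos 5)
Your argument is correct in substance but takes a genuinely different route from the paper. The paper treats \eqref{non-diss:eq:dynamics-hat-noconserve} as a non-homogeneous perturbation of the reduced system \eqref{non-diss:eq:dynamics-hat-noconserve-reduced}: it writes the solution via the variation-of-constants formula against the state-transition matrix $\hat{\Psi}(t,s)$, bounds the homogeneous part using Lemma \ref{non-diss:lemma:w-hat-reduced-decay}, and then applies Gr\"onwall's inequality to $\babs{\hvW(t)}e^{K_{v}t/M}$ so that the rate $K_{v}$ degrades to $\hat{K}=K_{v}-C_{3}$ only at the Gr\"onwall step. You instead run the Lyapunov computation directly on the perturbed system and absorb the indefinite quadratic form $\hvW^{T}\Delta(t)\hvW\le C_{3}\abs{\hvW}^{2}$ into the damping before integrating, getting $\frac{d}{dt}E_{r}\le-\hat{K}\abs{\hvW}^{2}$ in one stroke; this is more elementary (no Duhamel formula, no state-transition matrix) and produces the $L^{2}$-in-time estimate needed for part (i) as a byproduct. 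Two small caveats. First, your energy chain naturally yields the prefactor $\left(2E_{r}(t_{1})/M\right)^{1/2}=\left(\babs{\hvW(t_{1})}^{2}+2U(\hvX(t_{1}))/M\right)^{1/2}$ rather than $\babs{\hvW(t_{1})}$, so the stated constant is recovered only if $U(\hvX(t_{1}))=0$; the paper's own Lemma \ref{non-diss:lemma:w-hat-reduced-decay} makes the same silent replacement, so this is a shared wrinkle rather than a defect specific to your route. Second, applying Cauchy--Schwarz to $\int_{t_{1}}^{t}\hvw_{i}\,d\tau$ introduces a factor $(t-t_{1})^{1/2}$ in front of the $L^{2}$ norm, so executed literally your plan for (i) does not reproduce the stated bound (the paper's Jensen step has the same normalization issue); the clean alternative is to integrate the pointwise bound (ii) directly, which gives $\abs{\hvx_{i}(t)-\hvx_{i}(t_{1})}\le\babs{\hvW(t_{1})}\frac{M}{\hat{K}}\left(1-e^{-\frac{\hat{K}}{M}(t-t_{1})}\right)$, the estimate actually used in Corollary \ref{non-diss:corr:flocking}.
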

\begin{proof}
	Let $\hat{\Psi}(t, s)$, $t \ge s \ge 0$, be the state-transition matrix of the linear system \eqref{non-diss:eq:dynamics-hat-noconserve-reduced}.
	By the variation of constants formula, the solution of \eqref{non-diss:eq:dynamics-hat-noconserve} is given by
	\begin{equation*}
		\begin{aligned}
			(\hvX(t), \hvW(t))
			& = \hat{\Psi}(t, t_{1}) (\hvX(t_{1}), \hvW(t_{1}))
			+ \int_{t_{1}}^{t} \hat{\Psi}(t, \tau) (\vzero, M^{-1}\Delta(\tau)\hvW(\tau))  d\tau.
		\end{aligned}
	\end{equation*}
	As before, let $\hat{\Psi}_{\vW}(t, s)$ be the last $Nd$ rows of the matrix $\hat{\Psi}(t, s)$.
	Using Lemma \ref{non-diss:lemma:w-hat-reduced-decay}, we get
	\begin{equation}
		\label{non-diss:eq:blah-blah}
		\begin{aligned}
			\babs{\hvW(t)}
			& \le \babs{\hat{\Psi}_{\vW}(t, t_{1}) (\hvX(t_{1}), \hvW(t_{1}))}
			+ \abs{\int_{t_{1}}^{t} \hat{\Psi}_{\vW}(t, \tau)  (\vzero, M^{-1}\Delta(\tau)\hvW(\tau))   d\tau} 
			\\
			& \le \babs{\hvW(t_{1})} e^{-\frac{K_{v}}{M}(t-t_{1})}
			+ \int_{t_{1}}^{t} M^{-1} C_{3} \babs{\hvW(\tau)} e^{-\frac{K_{v}}{M}(t-t_{1})}  d\tau.
		\end{aligned}
	\end{equation}
	Let $\Lambda(t) = \babs{\hvW(t)} e^{\frac{K_{v}}{M}t}$. 
	Then \eqref{non-diss:eq:blah-blah} can be written as
	\begin{equation*}
		\Lambda(t) \le \Lambda(t_{1}) + \int_{t_{1}}^{t} M^{-1} C_{3} \Lambda(\tau) d\tau.
	\end{equation*}
	By Gr\"{o}nwall's inequality, we get
	\begin{equation*}
		\Lambda(t) \le \Lambda(t_{1}) \exp \left(\int_{t_{1}}^{t} M^{-1} C_{3} d\tau \right).
	\end{equation*}
	Thus,
	\begin{equation*}
		\babs{\hvW(t)} \le \babs{\hvW(t_{1})} \exp\left(-\frac{1}{M} \left(K_{v} - C_{3}\right) (t-t_{1})\right),
	\end{equation*}
	and (ii) follows.
	
	Now, using Jensen's inequality, for $i = 1, \ldots, N$, we get
	\begin{equation}
		\begin{aligned}
			\abs{\hvx_{i}(t) - \hvx_{i}(t_{1})}^{2}
			& = \sum_{m = 1}^{d} \abs{\hvx_{i}^{(m)}(t) - \hvx_{i}^{(m)}(t_{1})}^{2}
			\\
			& = \sum_{m = 1}^{d} \abs{\int_{t_{1}}^{t} \hvw_{i}^{(m)}(\tau) d \tau }^{2}
			\\
			& \le \int_{t_{1}}^{t} \abs{\hvw_{i}(\tau)}^{2} d \tau 
			\\
			& \le \babs{\hvW(t_{1})}^{2} \int_{t_{1}}^{t} e^{-\frac{2\hat{K}}{M} (\tau - t_{1})} d \tau 
			\\
			& \le \babs{\hvW(t_{1})}^{2} \frac{M}{2\hat{K}} \left(1 - e^{-\frac{2\hat{K}}{M} (t - t_{1})}\right),
		\end{aligned}
	\end{equation}
	and (i) follows.
\end{proof}

\begin{corollary}
	\label{non-diss:corr:flocking}
	Suppose that Assumptions \ref{non-diss:assmp:fl-is-bdd}-\ref{non-diss:assmp:sp-forces-jacobian-is-bdd} hold and that $\hat{K} > 0$.
	Then 
	\begin{enumerate}[(i)]
		\item after the spatial stabilization time, the agents' positions will not change by more than $\babs{\hvW(t_{1})} \left(\frac{M}{\hat{K}} \right)^{\frac{1}{2}}$;
		\item $\abs{\vw_{i}(t) - \bvw(t)} \to 0$ as $t \to \infty$, for all $i = 1, \ldots, N$, and the group exhibits exact flocking;
	\end{enumerate}
\end{corollary}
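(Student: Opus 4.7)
The plan is to derive both claims directly from results already established in the paper, with essentially no new analytical work required: Theorem \ref{non-diss:thm:flocking} supplies the quantitative bounds on the structural dynamics $(\hvX, \hvW)$, while Theorems \ref{non-diss:thm:bdd-sols} and \ref{non-diss:thm:bdd-sols-repeated} supply the approximate flocking bounds in the leader's frame. The role of the hypothesis $\hat{K} > 0$ is precisely to turn the exponential factors in Theorem \ref{non-diss:thm:flocking} into genuine decay.

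For (i), I would begin with the bound in Theorem \ref{non-diss:thm:flocking}(i),
\begin{equation*}
	\abs{\hvx_{i}(t) - \hvx_{i}(t_{1})} \le \babs{\hvW(t_{1})} \left[ \frac{M}{2\hat{K}} \left(1 - e^{-\frac{2\hat{K}}{M} (t - t_{1})}\right) \right]^{\frac{1}{2}},
\end{equation*}
and simply observe that since $\hat{K} > 0$, the factor $1 - e^{-\frac{2\hat{K}}{M}(t - t_1)}$ lies in $[0,1)$ for all $t \ge t_1$. This yields $\abs{\hvx_i(t) - \hvx_i(t_1)} \le \babs{\hvW(t_1)} (M/(2\hat{K}))^{1/2} \le \babs{\hvW(t_1)} (M/\hat{K})^{1/2}$, which is the stated bound.

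For (ii), I would invoke Theorem \ref{non-diss:thm:flocking}(ii): since $\hat{K} > 0$, the estimate $\abs{\hvw_i(t)} \le \babs{\hvW(t_1)} e^{-\hat{K}(t - t_1)/M}$ forces $\abs{\hvw_i(t)} \to 0$. By the definition $\hvw_i = \vw_i - \bvw$, this is exactly the claim $\abs{\vw_i(t) - \bvw(t)} = \abs{\vv_i(t) - \bar{\vv}(t)} \to 0$.

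It remains to verify the three requirements of exact flocking from Definition \ref{diss:def:flocking}(b). The uniform-in-time position bound \eqref{diss:eq:flocking-def-appr-pos} and velocity bound \eqref{diss:eq:flocking-def-appr-vel} follow directly from Theorem \ref{non-diss:thm:bdd-sols}(i)-(ii) in the case $\tilde{K}_v^2 - 4\tilde{K}_p \ne 0$, and from Theorem \ref{non-diss:thm:bdd-sols-repeated}(i)-(ii) in the degenerate case, so approximate flocking holds under Assumptions \ref{non-diss:assmp:fl-is-bdd}-\ref{non-diss:assmp:sp-forces-are-approx-linear} alone. The velocity-alignment condition \eqref{diss:eq:flocking-def-exact-vel} then follows from Step (ii) above via the identity $\vv_i - \vv_j = \hvw_i - \hvw_j$ and the triangle inequality, $\abs{\vv_i(t) - \vv_j(t)} \le \abs{\hvw_i(t)} + \abs{\hvw_j(t)} \to 0$. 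There is essentially no obstacle here; the only care needed is the case split on the discriminant $\tilde{K}_v^2 - 4\tilde{K}_p$ when invoking boundedness, since the explicit absorbing-ball constants differ between the two regimes but both suffice for the qualitative conclusion.
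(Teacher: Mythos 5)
Your proposal is correct and follows essentially the same route as the paper's own proof: part (i) is read off from Theorem \ref{non-diss:thm:flocking}(i) using $\hat{K}>0$ to bound the exponential factor, and part (ii) combines Theorem \ref{non-diss:thm:flocking}(ii) with the boundedness results of Theorem \ref{non-diss:thm:bdd-sols}. Your version is slightly more explicit in checking the clauses of Definition \ref{diss:def:flocking}(b) and in handling the degenerate discriminant case, but the substance is identical.
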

\begin{proof}
	Since $\hat{K} > 0$, (i) follows from Theorem \ref{non-diss:thm:flocking} (i).
	(ii) follows from Theorem \ref{non-diss:thm:flocking} (ii) and Theorem \ref{non-diss:thm:bdd-sols} (i).
\end{proof}

\begin{remark}
	\normalfont
	The rate of convergence to the velocity consensus as well as spatial stability of the group depend on $\hat{K}$, which, in turn, depends on the tunable parameters $K_{v}$ and $C_{\delta V}$. 
	Informally speaking, the former represents the magnitude of the linear component of the velocity alignment force, and the latter represents the magnitude of its nonlinear part.
	The stronger the linear component of the force, the faster the group will converge to a velocity consensus and the more rigid spatial configuration it will maintain.
\end{remark}

\subsection{Dynamics of the center of mass}

Now, consider the dynamics of the center of mass. 
For $t \ge t_{1}$, \eqref{non-diss:eq:dynmaics-com} can be written as
\begin{equation}
	\label{non-diss:eq:dynmaics-com-noconserve}
	\begin{aligned}
		\dot{\bvx} & = \bvw,
		\\
		M \dot{\bvw} & = -K_{v} \bvw + \frac{1}{N} \sum_{i = 1}^{N} \delta \vu^{V} (\bvw + \hvw_{i}) -  M\vf^{L}.
	\end{aligned}
\end{equation}
We show that the deviation of the velocity of the center of mass from that of the virtual leader stays bounded after the spatial stabilization time.
\begin{theorem}
	\label{non-diss:thm:com}
	Let $(\bvx(t), \bvw(t))$ be a solution of \eqref{non-diss:eq:dynmaics-com-noconserve}.
	Suppose that Assumption \ref{non-diss:assmp:sp-forces-jacobian-is-bdd} hold.
	Then, for any $\varepsilon_{2} > 0$, there exists $t_{2} = t_{2}(\bvw(t_{1}), \varepsilon_{2})$ such that 
	\begin{equation}
		\label{non-diss:eq:com-upper-bound}
		\abs{\bvw(t)} \le \varepsilon_{2} + \frac{2 C_{4}}{K_{v}}
	\end{equation}
	for all $t \ge t_{2}$, where
	\begin{equation*}
		C_{4} = N^\frac{1}{2} C_{\delta V} + MC_{l}.
	\end{equation*}
\end{theorem}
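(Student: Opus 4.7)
The equation \eqref{non-diss:eq:dynmaics-com-noconserve} is a scalar linear ODE for $\bvw$ with constant decay coefficient $K_{v}/M$ and a time-dependent forcing
\begin{equation*}
  \vG(t) \;=\; \frac{1}{N}\sum_{i=1}^{N} \delta \vu^{V}(\bvw(t) + \hvw_i(t)) \;-\; M \vf^{L}(t),
\end{equation*}
so the natural plan is to bound $|\vG(t)|$ by a constant and then close the estimate with a standard variation-of-constants argument, exactly in the style of Lemma \ref{non-diss:lemma:w-full-bdd}.

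First I would bound the forcing. Assumption \ref{non-diss:assmp:sp-forces-jacobian-is-bdd} together with the approximate linearity of $\vu^{V}$ (Assumption \ref{non-diss:assmp:sp-forces-are-approx-linear}) ensures that $\delta \vu^{V}$ is globally Lipschitz with constant $C_{\delta V}$, and combining Lipschitz continuity with the fact that approximate linearity forces $|\delta \vu^{V}|$ to be controlled on any bounded region of its argument, I get a uniform pointwise bound of the form $|\delta \vu^{V}(\bvw + \hvw_i)| \lesssim C_{\delta V}$. Averaging over $i$ and collecting the $\vf^{L}$ contribution with the help of Assumption \ref{non-diss:assmp:fl-is-bdd} yields $|\vG(t)| \le C_{4}$ for all $t \ge t_{1}$, with $C_{4} = N^{1/2} C_{\delta V} + M C_{l}$.

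Next I would apply the variation-of-constants formula on $[t_{1}, t]$ to the linear ODE $M\dot{\bvw} + K_{v}\bvw = \vG(t)$, obtaining
\begin{equation*}
  \bvw(t) \;=\; e^{-\frac{K_{v}}{M}(t-t_{1})}\bvw(t_{1}) + \frac{1}{M}\int_{t_{1}}^{t} e^{-\frac{K_{v}}{M}(t-\tau)} \vG(\tau)\, d\tau.
\end{equation*}
Taking norms and using $|\vG(\tau)| \le C_{4}$ gives
\begin{equation*}
  |\bvw(t)| \;\le\; |\bvw(t_{1})|\, e^{-\frac{K_{v}}{M}(t-t_{1})} + \frac{C_{4}}{K_{v}}\left(1 - e^{-\frac{K_{v}}{M}(t-t_{1})}\right).
\end{equation*}
Finally, given $\varepsilon_{2}>0$, I would choose $t_{2} \ge t_{1}$ large enough that the homogeneous part $|\bvw(t_{1})|\, e^{-K_{v}(t-t_{1})/M}$ is smaller than $\varepsilon_{2} + C_{4}/K_{v}$ for all $t \ge t_{2}$, which combined with the bound $C_{4}/K_{v}$ on the particular solution delivers \eqref{non-diss:eq:com-upper-bound}.

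The only non-routine step is the first one: producing the uniform constant bound $C_{4}$ on the averaged nonlinear term. The Jacobian bound alone does not directly yield a pointwise bound on $\delta \vu^{V}$, so the argument has to be coupled to the already-established boundedness of $\vW$ (via Lemma \ref{non-diss:lemma:w-full-bdd}) and of $\hvW$ (via Theorem \ref{non-diss:thm:flocking}) to rule out growth of the argument of $\delta \vu^{V}$; once this is in place, everything else is essentially a Gronwall-free linear estimate.
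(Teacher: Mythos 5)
Your proposal is correct and follows essentially the same route as the paper: the paper likewise treats \eqref{non-diss:eq:dynmaics-com-noconserve} as a decoupled linear equation for $\bvw$, applies the variation-of-constants formula with the exponentially decaying state-transition matrix of the reduced system $M\dot{\bvw}_{r} = -K_{v}\bvw_{r}$, bounds the forcing by $C_{4}$, and lets the homogeneous part decay to absorb it into $\varepsilon_{2}$. The one step you flag as non-routine --- extracting a uniform pointwise bound on the averaged nonlinear term from the Jacobian bound $C_{\delta V}$ --- is asserted without further justification in the paper's own proof, so your observation that it must be coupled to the already-established boundedness of the velocities is, if anything, more careful than the original.
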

\begin{proof}
	Note that \eqref{non-diss:eq:dynmaics-com-noconserve} is decoupled, so in order to obtain \eqref{non-diss:eq:com-upper-bound} it suffices to consider only the second equation in \eqref{non-diss:eq:dynmaics-com-noconserve}.
	As before, a solution $\bvw_{r}(t)$ of the linear reduced system 
	\begin{equation}
		\label{non-diss:eq:dynmaics-com-noconserve-reduced}
		M \dot{\bvw}_{r} = -K_{v} \bvw_{r}
	\end{equation}
	will satisfy
	\begin{equation*}
		\abs{\bvw_{r}(t)} \le \abs{\bvw_{r}(t_{1})} e^{-\frac{K_{v}}{M} (t-t_{1})}
	\end{equation*}
	for all $t \ge t_{1}$.
	Let $\Psi_{\bvw}(t, s)$, $t \ge s \ge 0$, be the state-transition matrix of \eqref{non-diss:eq:dynmaics-com-noconserve-reduced}.
	Then, by the variation of constants formula, we get
	\begin{equation*}
		\begin{aligned}
			\abs{\bvw(t)} 
			& =
			\abs{\Psi_{\bvw}(t, t_{1}) \bvw(t_{1})
			+ \int_{t_{1}}^{t} \Psi_{\bvw}(t, \tau) \left( \frac{1}{NM} \sum_{i = 1}^{N} \delta \vu^{V} (\bvw + \hvw_{i}) - \vf^{L}(\tau) \right) d \tau }
			\\
			& \le 
			\abs{\bvw_{r}(t_{1})} e^{-\frac{K_{v}}{M} (t-t_{1})}
			+ \frac{1}{M} \int_{t_{1}}^{t} \abs{ \frac{1}{N} \sum_{i = 1}^{N} \delta \vu^{V} (\bvw + \hvw_{i}) - M \vf^{L}(\tau)} e^{-\frac{K_{v}}{M} (t-t_{1})} d \tau
			\\
			& \le 
			\left(\abs{\bvw_{r}(t_{1})} - \frac{C_{4}}{K_{v}} \right) e^{-\frac{K_{v}}{M} (t-t_{1})} + \frac{C_{4}}{K_{v}}.
		\end{aligned}
	\end{equation*}
	Since $K_{v} > 0$, the result follows.
\end{proof}

\begin{remark}
	\normalfont
	For the case $\tilde{K}_{v}^{2} - 4\tilde{K}_{p} \ne 0$, for large times, Corollary \ref{non-diss:corr:flocking} together with Theorem \ref{non-diss:thm:com} can potentially provide a tighter bound on $\abs{\vw_{i}(t)}$, $i = 1, \ldots, N$, than the one suggested in Theorem \ref{non-diss:thm:bdd-sols} (ii).
	However, which bound is tighter would depend on the relation between the tunable parameters $K_{p}, K_{v}, C_{p}, C_{l}$, and $C_{\delta V}$.	
\end{remark}

\section{Computational examples}
\label{section:comp}

In this section, we first present results of numerical simulations demonstrating that the system exhibits flocking dynamics in both dissipative and non-dissipative cases.
We discuss qualitatively distinct motion regimes that can be achieved using different combinations of the model parameters.
Finally, we provide some numerical illustrations of wobblers, the non-equilibrium solutions that can be a part of the attractor in the non-strictly dissipative case.

\subsection{Non-dimensional units}
For numerical simulations, it is convenient to introduce non-dimensional units, allowing to work with numerical values of order of unity and obtain some characteristic quantities defining different motion regimes.
Let
\begin{equation*}
	m^{\prime} = \frac{[m]}{M},
	\;\;
	r^{\prime} = \frac{[r]}{r_{C}},
	\;\;
	v^{\prime} = \frac{[v]}{v_{char}},
	\;\;
	t^{\prime} = \frac{[t]}{r_{C} / v_{char}},
\end{equation*}
where $v_{char}$ is the characteristic speed of the virtual leader (and thus the characteristic speed of all the agents in the group, assuming that the agents are following the leader's trajectory).
Substituting the above into \eqref{model:dynamics}, we get
\begin{equation}
	\label{comp:dynamics-nondim}
	\begin{aligned}
		\dot{\vq}_{i}^{\prime} & = \vv_{i}^{\prime},
		\\
		\dot{\vv}_{i}^{\prime} & = 
			A^{\prime} \sum_{i \ne j} w_{C} \left(r_{C} \abs{\vq_{ij}^{\prime}}\right) \vq_{ij}^{\prime} 
			- B^{\prime} \sum_{i \ne j} w_{D} \left(r_{D} \abs{\vq_{ij}^{\prime}}\right) \vv_{ij}^{\prime} 
		\\
			& \phantom{12345}
			- \alpha^{\prime} k \left(r_{C} \abs{\vq_{il}^{\prime}} \right) \frac{\vq_{il}^{\prime}}{\abs{\vq_{il}^{\prime}}}
			- \beta^{\prime} p \left(v_{char} \abs{\vv_{il}^{\prime}} \right) \frac{\vv_{il}^{\prime}}{\abs{\vv_{il}^{\prime}}},
	\end{aligned}
	\qquad	i = 1, \ldots, N,
\end{equation}
where $\vq^{\prime}_{i} = r_{C}^{-1} \vq_{i}$, $\vv^{\prime}_{i} = v_{char}^{-1} \vv_{i}$, for $i = 1, \ldots, N + 1$.
The dimensionless parameters
\begin{equation}
	\label{comp:eq:coeff}
	A^{\prime} = \frac{r_{C}^{2}}{M v_{char}^{2}} A,
	\;\;
	B^{\prime} = \frac{r_{C}}{M v_{char}} B,
	\;\;
	\alpha^{\prime} = \frac{r_{C}^{2}}{M v_{char}^{2}} \alpha,
	\;\; 
	\beta^{\prime} = \frac{r_{C}}{M v_{char}} \beta 
\end{equation}
represent relative contributions of the corresponding forces to the dynamics of an agent.
The parameters
\begin{equation*}
	r_{D}^{\prime} = \frac{r_{D}}{r_{C}},
	\;\;
	r_{0}^{\prime} = \frac{r_{0}}{r_{C}}
\end{equation*}
represent the cut-off distance of the ambient dissipative force and the activation threshold of the position alignment force, respectively, measured in terms of the cut-off distance of the ambient conservative force.
The parameter
\begin{equation*}
	v_{0}^{\prime} = \frac{v_{0}}{v_{char}}
\end{equation*}
represents the activation threshold of the velocity alignment force measured in terms of the characteristic speed of the virtual leader. 

\begin{figure*}[h]
	\centering
	\begin{subfigure}[t]{0.4\linewidth}
		\includegraphics[width=1\textwidth]{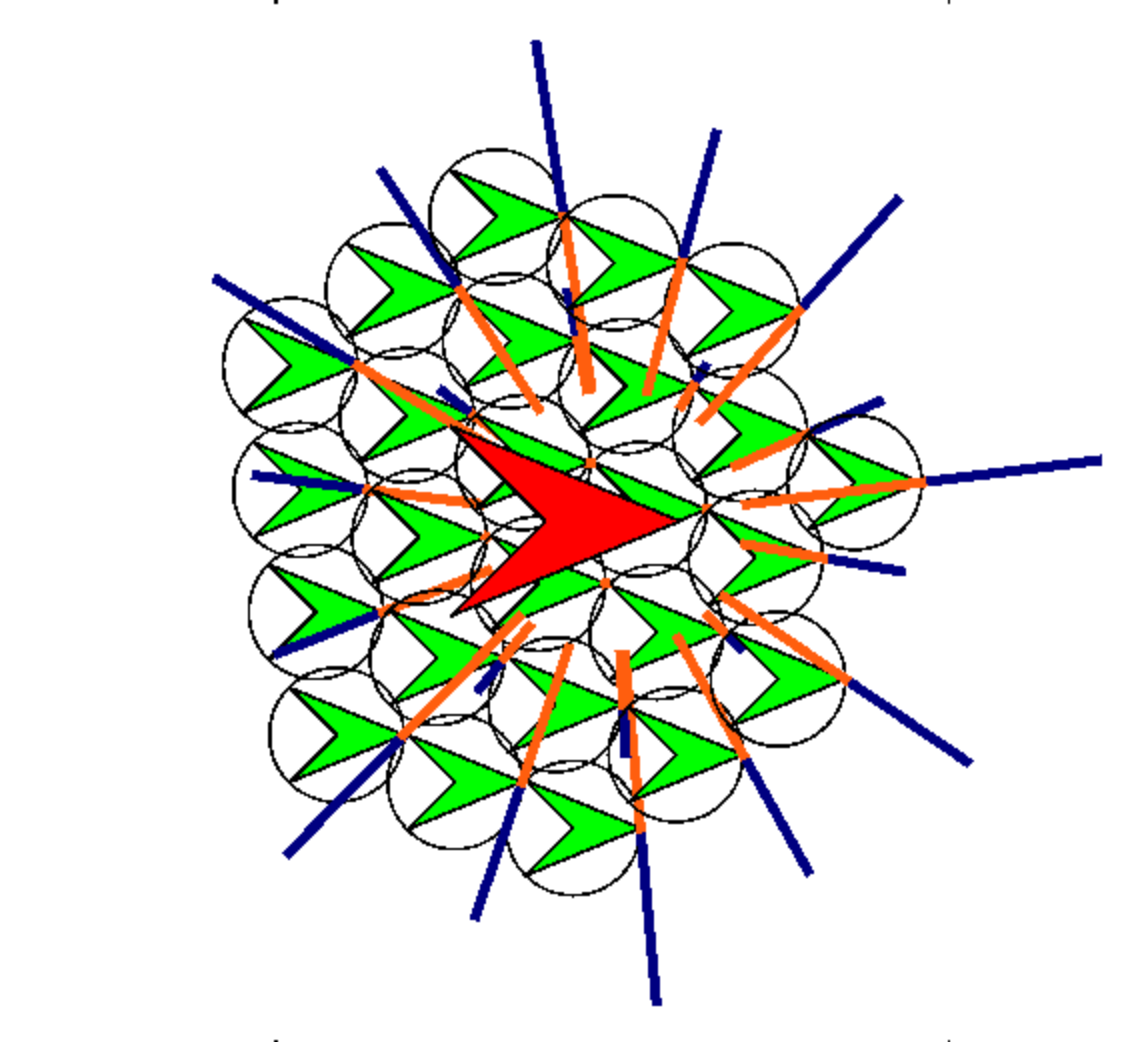}
		\caption{Tight packing}
	\end{subfigure}
	\begin{subfigure}[t]{0.4\linewidth}
		\includegraphics[width=1\textwidth]{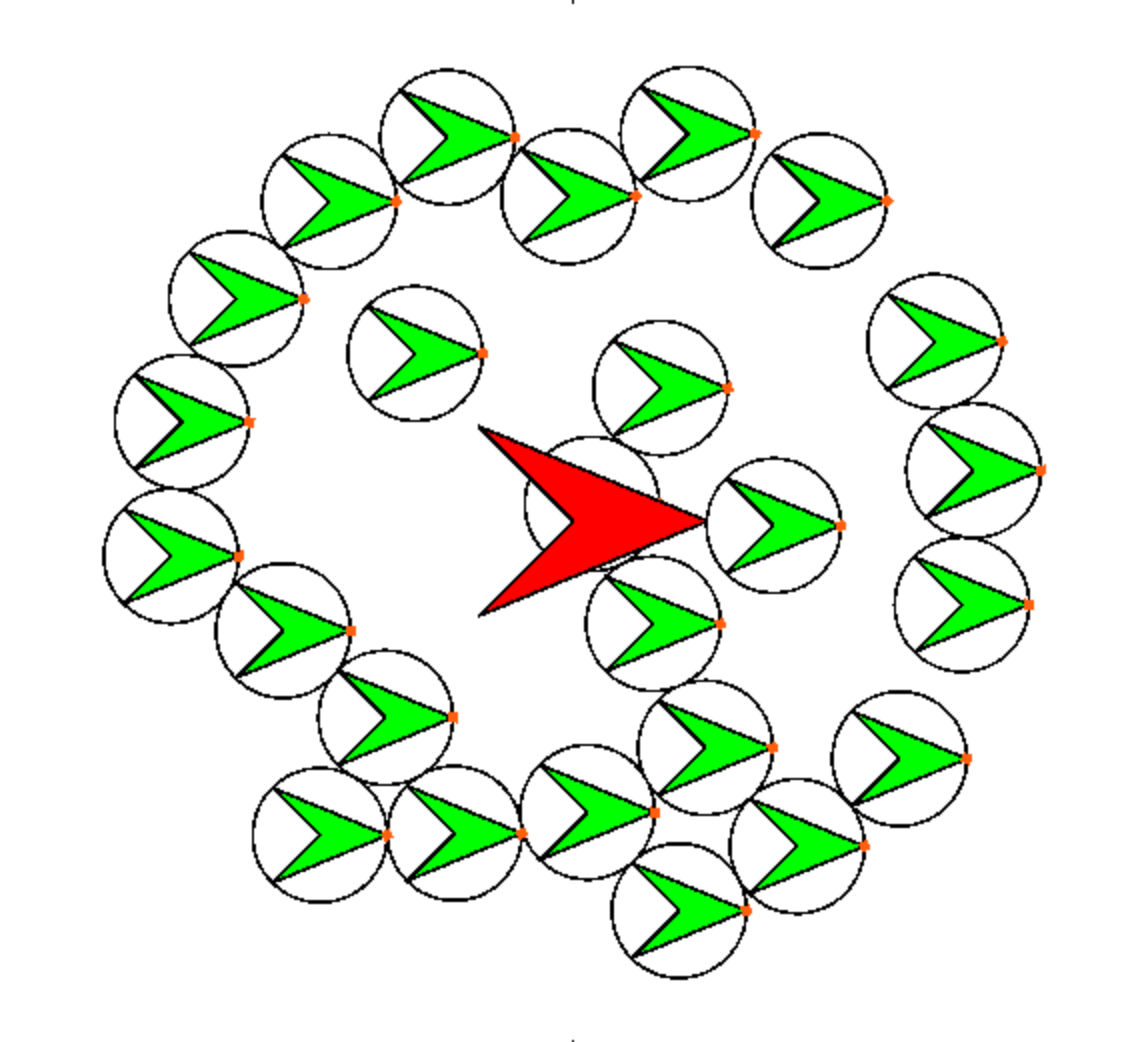}
		\caption{Loose packing}
	\end{subfigure}
	\caption{A stabilized group of \(N = 25\) agents moving in a two-dimensional space. (a) illustrates a ``tight packing'' configuration with a high group density \(\rho\), where the followers (small green arrows) are highly concentrated around the virtual leader (large red arrow). Both ambient conservative forces (blue lines attached to the arrow tips) and self-propulsion conservative forces (orange lines attached to the arrow tips) are non-zero for the agents that are farther than \(r_0\) from the virtual leader. (b) illustrates a ``loose packing'' configuration with a low group density \(\rho\). In this case, all conservative forces are zero. The radius of the circles enclosing the followers is equal to the cut-off distance  $r_{c}$ of the conservative force. (Color figure online)}
	\label{comp:fig:density}
\end{figure*}

Using the non-dimensional units introduced above, we set $A^{\prime} = 10$ and $r_{D}^{\prime} = 5$.
The remaining dimensionless parameters $B^{\prime}$, $\alpha^{\prime}$, $r_{0}^{\prime}$, $\beta^{\prime}$, and $v_{0}^{\prime}$ are assumed to be free.
We use velocity-Verlet \cite{swope1982computer} integration scheme, implemented in a custom simulator, setting the time step to $\Delta t = 10^{-2}$ and running each simulation for $n = 10^5$ steps, which is sufficient for the system to stabilize.

\subsection{Motion regimes} 

Recall that the position alignment force pulls an agent towards the virtual leader whenever the distance between the two becomes less than $r_{0}^{\prime}$, hence making the group tend to stay within the ball $B(\vq_{l}^{\prime}(t^{\prime}), r_{0}^{\prime})$.
Consequently, the quantity
\begin{equation*}
	\rho = \frac{N V_{d}(0.5)}{V_{d}(r_{0}^{\prime})},
\end{equation*}
where $V_{d}(r)$ is the volume of a $d$-dimensional ball of radius $r > 0$, can be seen as the ``density'' of the group.
Higher values of $\rho$ correspond to ``tighter packing'', characterized by a stronger influence of both ambient and self-propulsion conservative forces. 
Conversely, lower values of $\rho$ correspond to ``looser packing'', where the impact of conservative forces is weaker allowing the system to have some ``breathing room''.
Figure \ref{comp:fig:density} provides an illustration of the two scenarios.

The quantity 
\begin{equation*}
	v_{b} = \frac{v_{0}}{r_{C}}
\end{equation*}
represents the characteristic speed of the virtual leader measured in body lengths per second (blps) \cite{sitti2017mobile}. 
It follows from \eqref{comp:eq:coeff} that $A^{\prime} / B^{\prime}  = v_{b}^{-1} A / B$.
Here, $A / B$ is a fixed quantity that depends only on the properties of the material of the elastic bumpers and the properties of the medium in which the group operates.
Thus, the characteristic speed of the virtual leader can be implicitly set by setting the ratio $A^{\prime} / B^{\prime}$.
In our simulations, we set $A / B = 10$.

By varying the free parameters, one can obtain different combinations of \(\rho\) and \(v_{b}\), leading to qualitatively distinct motion regimes.
We consider nine such motion regimes, as listed in Table \ref{comp:tab:regimes}.
\begin{table}[h!]
	\centering
	\begin{tabular}{| c | c | c | c | c |c | c | c |} 
		\hline
		\multirow{2}{*}{\textbf{Regime}} & \multicolumn{5}{c|}{\textbf{Parameters}} & \multirow{2}{*}{$\bm{\rho}$} &  \multirow{2}{*}{$\bm{v_{b}}$ } \\ 		
		[0.5ex] 
		\cline{2-6}
		& $B^{\prime}$ & $\alpha^{\prime}$ & $r_{0}^{\prime}$ & $\beta^{\prime}$ & $v_{0}^{\prime}$ &  & \\ 
		\hline
		(1) typical                  &   1 & 1 & 4.64 &   1 & 0.5 & 0.25 &   1 \\
		\hline
		(2) no velocity alignment    &   1 & 1 & 4.64 &   0 &   0 & 0.25 &   1 \\
		(3) tight velocity alignment &   1 & 1 & 4.64 &   1 &   0 & 0.25 &   1 \\
		(4) no position alignment    &   1 & 0 & 4.64 &   1 &   0 & 0.25 &   1 \\
		\hline
		(5) low density              &  10 & 1 & 7.93 &   1 & 0.5 &  0.1 &   1 \\
		(6) optimal density          &  10 & 1 & 3.23 &   1 & 0.5 & 0.74 &   1 \\
		(7) high density             &  10 & 1 & 2.32 &   1 & 0.5 &   2  &   1 \\
		\hline
		(8) low speed                & 0.2 & 1 & 4.64 & 0.2 & 0.5 & 0.25 & 0.2 \\
		(9) high speed               &   5 & 1 & 4.64 &   5 & 0.5 & 0.25 &   2 \\
		\hline
	\end{tabular}
	\caption{Motion regimes and corresponding values of the free parameters for a group of $N = 100$ agents in a $3$-dimensional space with $A / B = 10$.}
	\label{comp:tab:regimes}
\end{table} 

Regime (1) is characterized by the presence of both position and velocity alignment forces, as well as a moderate group density.
In this regime, velocity alignment is not ``tight'', by which we mean that the corresponding force is activated only when an agent's velocity deviation from that of the virtual leader is sufficiently large.
We assume that regime (1) is typical.
In regimes (2)-(4), we disable one of the self-propulsion forces and ``tighten'' the velocity alignment.
In regimes (5)-(7), we vary the density of the group while keeping the rest of the parameters fixed.
Note that we refer to the density $\rho = 0.74$ as optimal since it is approximately equal to the maximum density for the packing of a collection of identical balls in $\R^{3}$ \cite{conway2013sphere}.
For $\rho < 0.74$, it is guaranteed that the ambient conservative force is non-zero for at least one pair of agents.
Finally, in regimes (8)-(9), we vary the characteristic speed of the group.

We set $k(s) = p(s) = s$ in \eqref{model:eq:h-s-def}, so that the position and the velocity alignment forces become piecewise linear.
Although in this setting $g$ and $h$ fail to be $C^{1}$, we assume that they represent numerical approximations of some $C^{1}$ functions that differ from $g$ and $h$ only on small intervals containing the points of non-differentiability.

\subsection{Flocking}

As before, we distinguish two possible scenarios: dissipative and non-dissipative.
To simulate the dissipative scenario, we let the virtual leader move along a straight line at the constant speed $v_{char}$.
For the non-dissipative scenario, the leader moves along a circle of radius $10r_{0}^{\prime}$ at the constant speed $v_{char}$.
The simulations are performed for $d = 3$ and $N = 100$.
The initial positions of the agents are randomly sampled from the uniform distribution on $B(\vq_{l}^{\prime}(0), r_{init}^{\prime})$, where $r_{init}^{\prime} = 10$, resulting in an initial spatial configuration with a group density of $\rho = 0.025$.
The initial velocities are randomly sampled from the $d$-variate truncated normal distribution with mean $\vv_{l}^{\prime}({0})$, covariance matrix $I_{d}$, and support $B(\vv_{l}^{\prime}({0}), 1)$.

In the process of simulations, we measure the following quantities:
\begin{equation*}
	\begin{gathered}
		\bar{q}_{dev}(t^{\prime}) = \frac{1}{N} \sum_{i = 1}^{N}\abs{\vq_{i}^{\prime}(t^{\prime}) - \vq_{l}^{\prime}(t^{\prime})},
		\\
		\bar{v}_{dev}(t^{\prime}) = \frac{1}{N} \sum_{i = 1}^{N} \abs{\vv_{i}^{\prime}(t^{\prime}) - \vv_{l}^{\prime}(t^{\prime})},
	\end{gathered}
\end{equation*}
and
\begin{equation*}
	\begin{aligned}
		\bar{U}(t^{\prime}) 
		&= \frac{1}{N} \sum_{i = 1}^{N} \int_{0}^{t^{\prime}} \abs{\vu^{\prime}_{i}(\tau)} \, d\tau
		\\
		&= \frac{1}{N} \sum_{i = 1}^{N} \int_{0}^{t^{\prime}} \abs{\alpha^{\prime} k \left(r_{C} \abs{\vq_{il}(\tau)} \right) \frac{\vq_{il}^{\prime}(\tau)}{\abs{\vq_{il}^{\prime}(\tau)}}
			+ \beta^{\prime} p \left(v_{char} \abs{\vv_{il}^{\prime}(\tau)} \right) \frac{\vv_{il}^{\prime}(\tau)}{\abs{\vv_{il}^{\prime}(\tau)}}} \, d\tau.
	\end{aligned}
\end{equation*}
Here, $\bar{q}_{dev}(t^{\prime})$ and $\bar{v}_{dev}(t^{\prime})$ represent the average deviation of an agent's position and velocity, respectively, from the those of the virtual leader at time $t^{\prime} \ge 0$.
Assuming each agent is propelled by a DC motor connected to a battery, $\bar{U}(t^{\prime})$ represents the total battery drain (see Appendix \ref{appendix:propeller} for details) at time $t^{\prime} \ge 0$, averaged over all agents in the group.
The results of simulations for the dissipative and the non-dissipative scenarios are shown in Figures \ref{comp:fig:cv} and \ref{comp:fig:rot}, respectively.

\begin{figure}[h]
	\centering
	\begin{subfigure}[t]{0.3\linewidth}
		\includegraphics[width=1\textwidth]{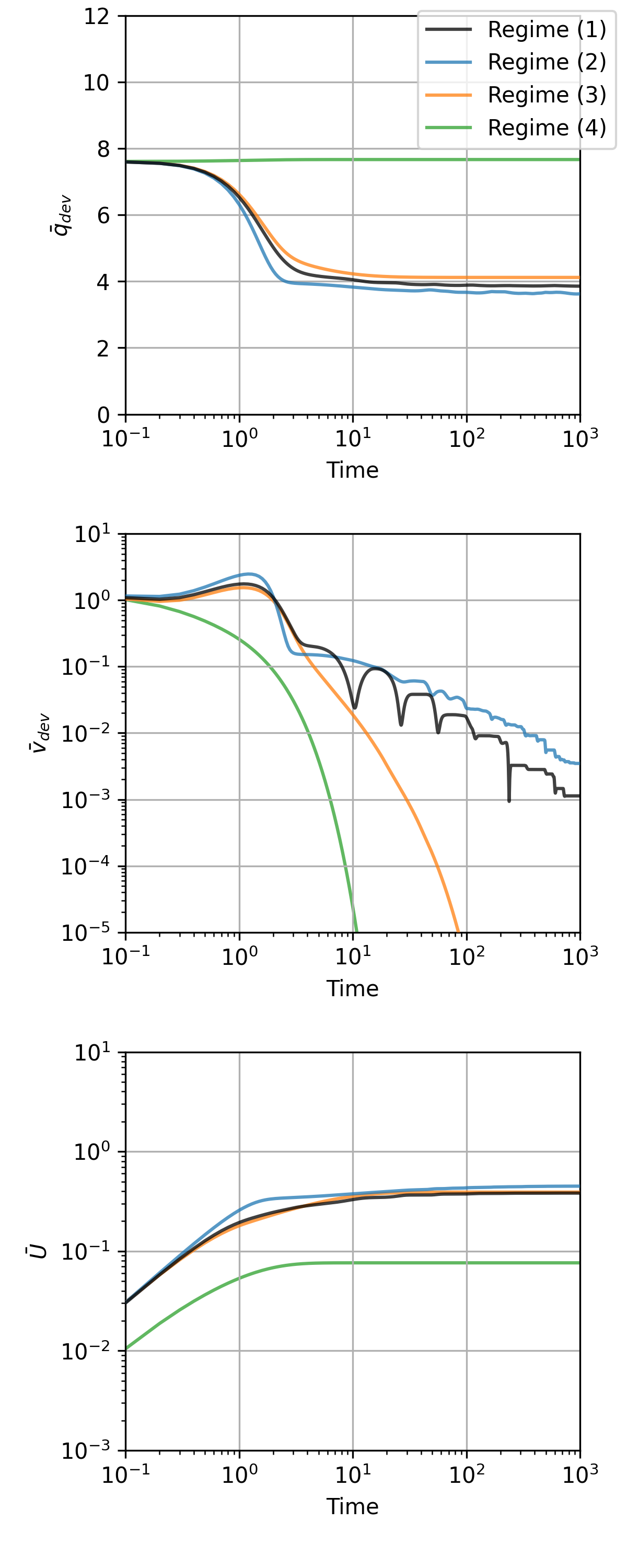}
		\caption{Regimes (1)-(4)}
	\end{subfigure}
	\begin{subfigure}[t]{0.3\linewidth}
		\includegraphics[width=1\textwidth]{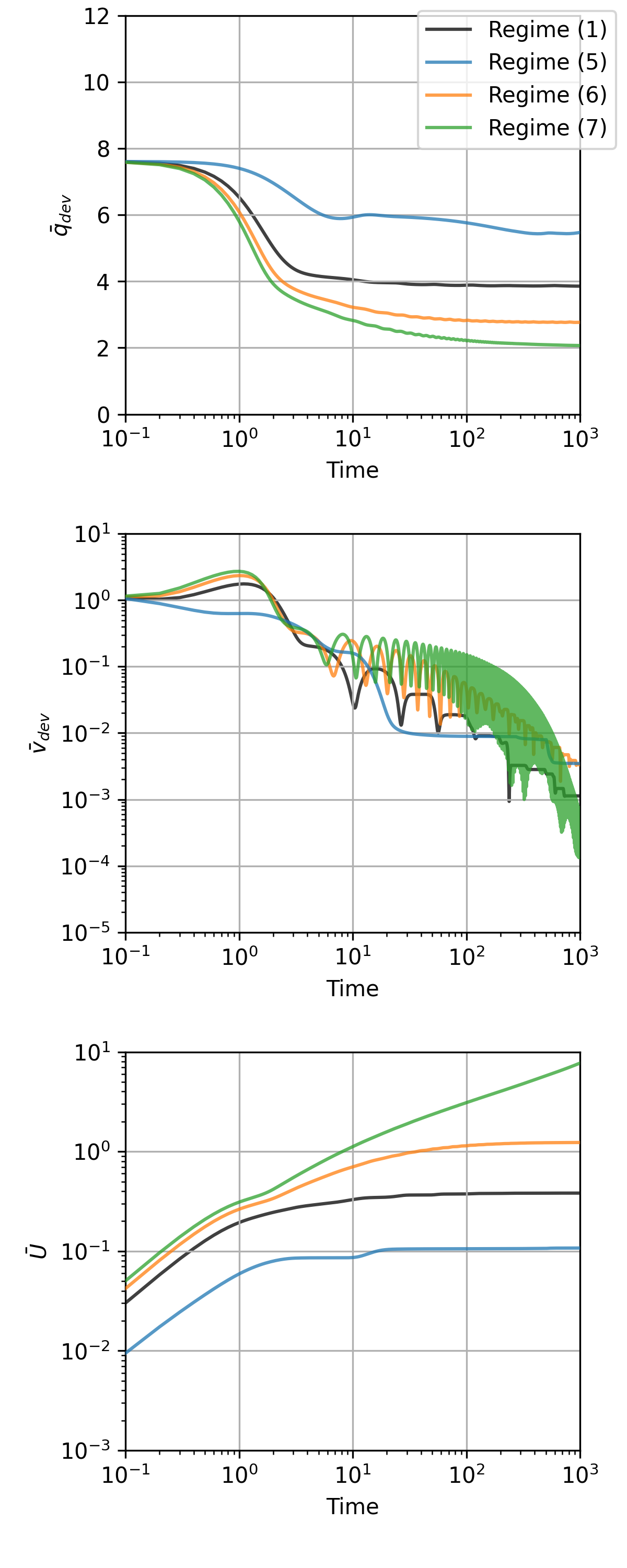}
		\caption{Regimes (1), (5)-(7)}
	\end{subfigure}
	\begin{subfigure}[t]{0.3\linewidth}
		\includegraphics[width=1\textwidth]{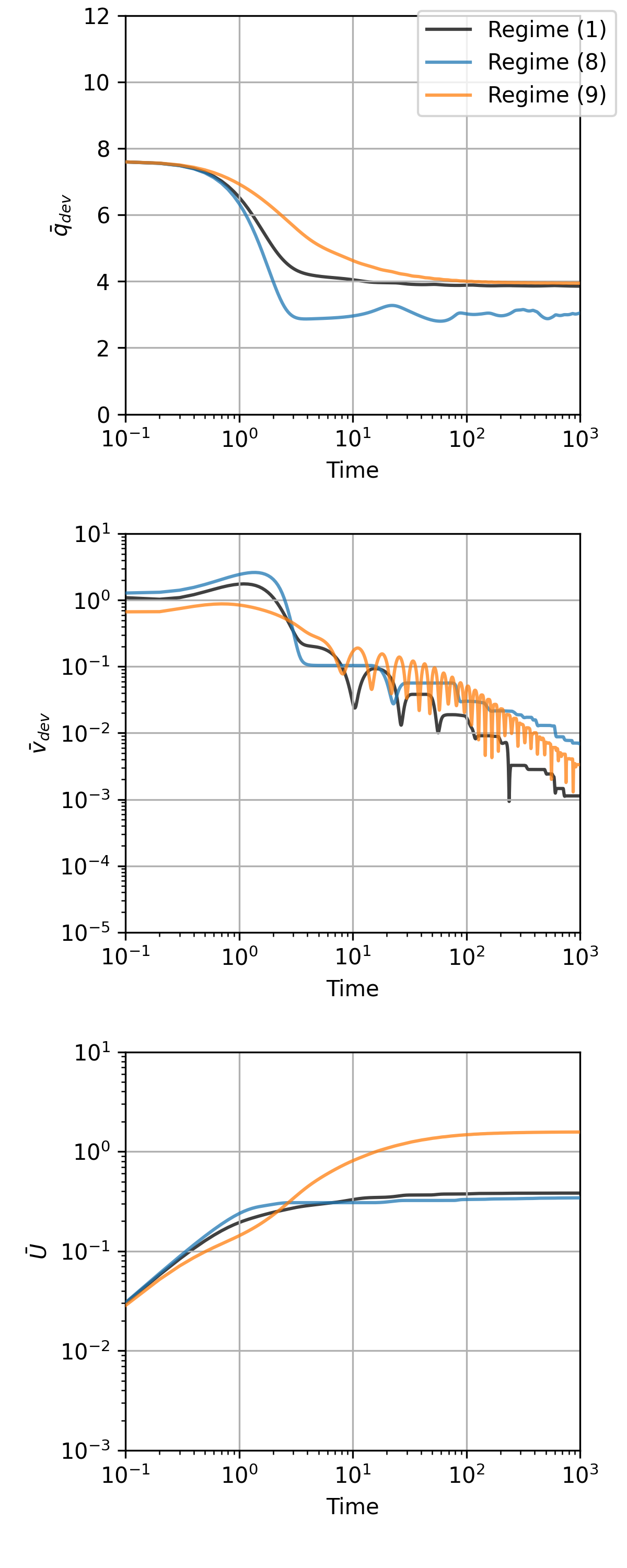}
		\caption{Regimes (1), (8)-(9)}
	\end{subfigure}
	\caption{Plots of $\bar{q}_{dev}(t^{\prime})$, $\bar{v}_{dev}(t^{\prime})$ and $\bar{U}(t^{\prime})$ for the dissipative scenario for a group of $N=100$ agents.}
	\label{comp:fig:cv}
\end{figure}

\begin{figure}[h!]
	\centering
	\begin{subfigure}[t]{0.3\linewidth}
		\includegraphics[width=1\textwidth]{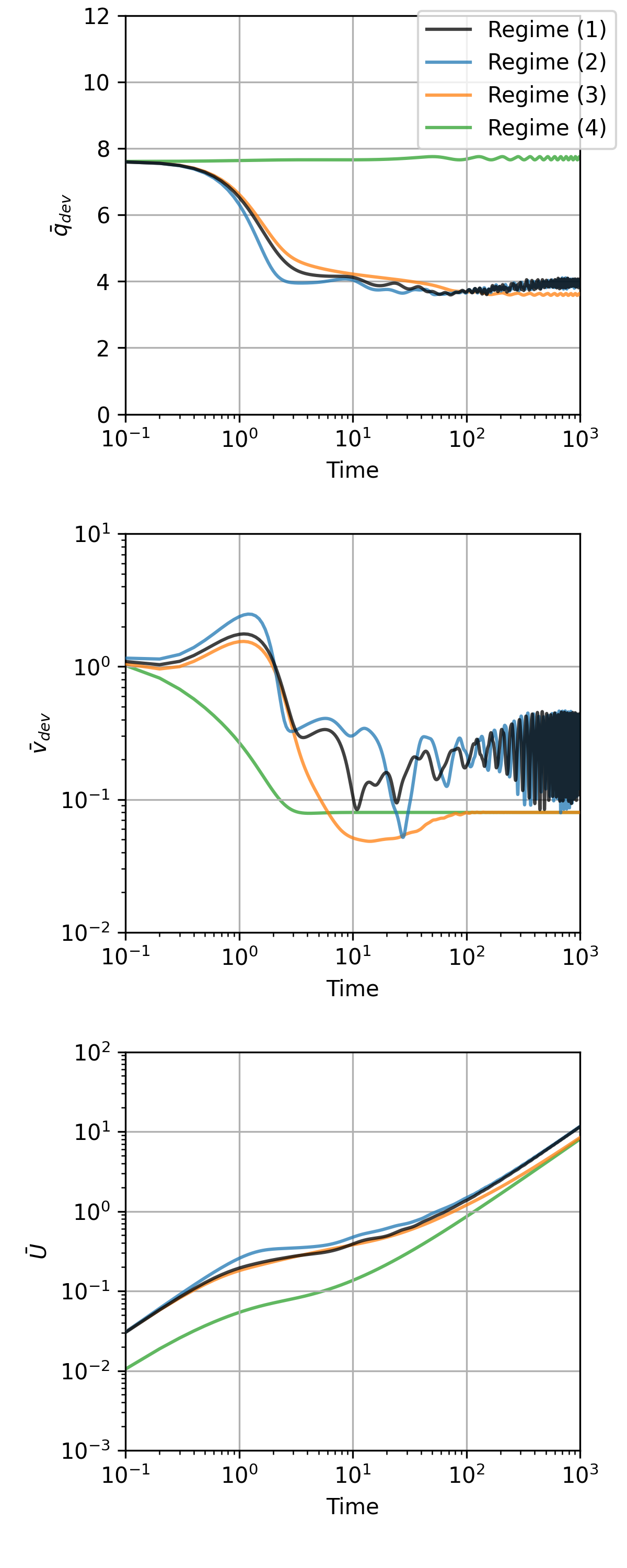}
		\caption{Regimes (1)-(4)}
	\end{subfigure}
	\begin{subfigure}[t]{0.3\linewidth}
		\includegraphics[width=1\textwidth]{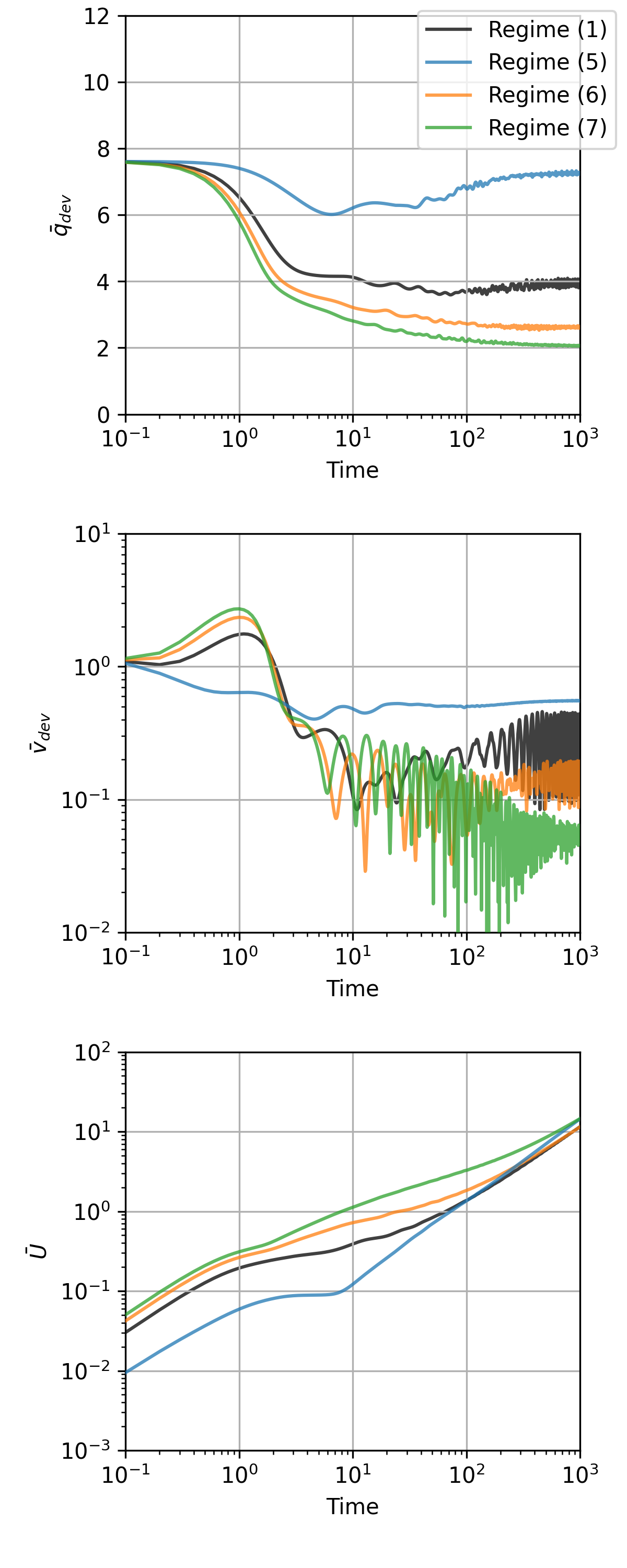}
		\caption{Regimes (1), (5)-(7)}
	\end{subfigure}
	\begin{subfigure}[t]{0.3\linewidth}
		\includegraphics[width=1\textwidth]{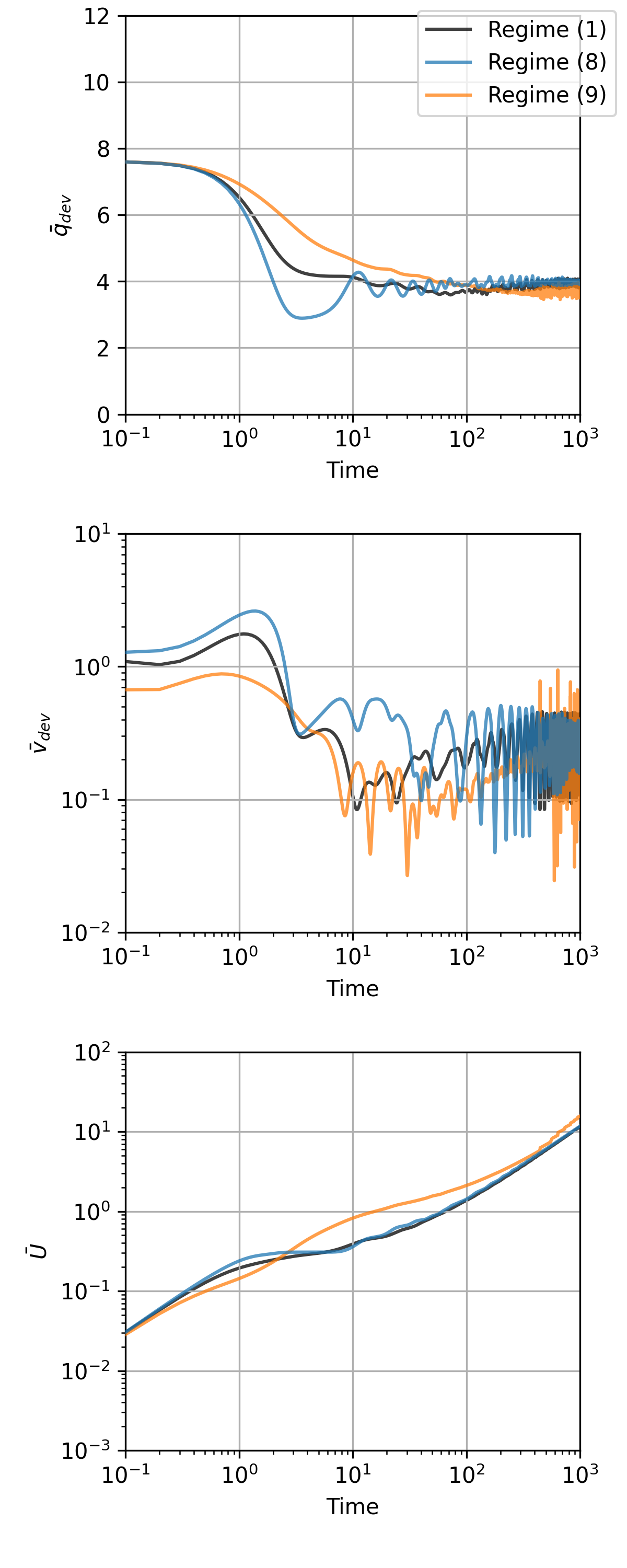}
		\caption{Regimes (1), (8)-(9)}
	\end{subfigure}
	\caption{Plots of $\bar{q}_{dev}(t^{\prime})$, $\bar{v}_{dev}(t^{\prime})$ and $\bar{U}(t^{\prime})$ for the non-dissipative scenario for a group of $N=100$ agents.}
	\label{comp:fig:rot}
\end{figure}

It can be seen from the figures that in all of the regimes, the relative positions of the agents remain bounded.
In the dissipative scenario, a precise asymptotic velocity consensus is achieved in all of the regimes, indicating that the system exhibits proper exact flocking.
The same is true for the non-dissipative scenario in regimes (3) and (4).
In all other regimes under the non-dissipative scenario, only a bounded velocity mismatch is achieved, making the flocking approximate.
Thus, applying ''tight'' velocity alignment is essential for convergence of the system to a proper exact flock in any of the considered scenarios.

Notably, the absence of the position alignment force is associated with a faster convergence to a velocity consensus, provided that a ``tight'' velocity alignment is applied (see regimes (3) and (4)). In situations when spatial dispersion of the group obtained under these regimes is unacceptable, one can use a balanced combination of the two alignment forces as in regime (1).
From the perspective of the battery drain, regimes where the position alignment force is absent, are more efficient.
A more detailed discussion of this aspect is presented in the next section.

Regimes with high values of the group density $\rho$ tend to exhibit faster convergence to a velocity consensus but are characterized by a more volatile spatial configuration.
The former is a consequence of a stronger damping effect of the ambient dissipative force, while the latter results from the competition between the repulsion of the ambient conservative force and the attraction of the position alignment force.
In turn, higher values of $v_{b}$ lead to higher volatility in both positions and velocities.

\subsection{Wobblers}

To illustrate the results obtained in Section \ref{section:wob}, we perform simulations of the dissipative scenario for a group of $N = 3$ agents moving in regimes (1) and (3) with $r_{0}^{\prime}$ set to zero.
Figure \ref{comp:fig:wob} shows the evolution of the differences of agents' positions as well as the agents' velocities.
\begin{figure}[h]
	\centering
	\begin{subfigure}[t]{0.45\linewidth}
		\includegraphics[width=1\textwidth]{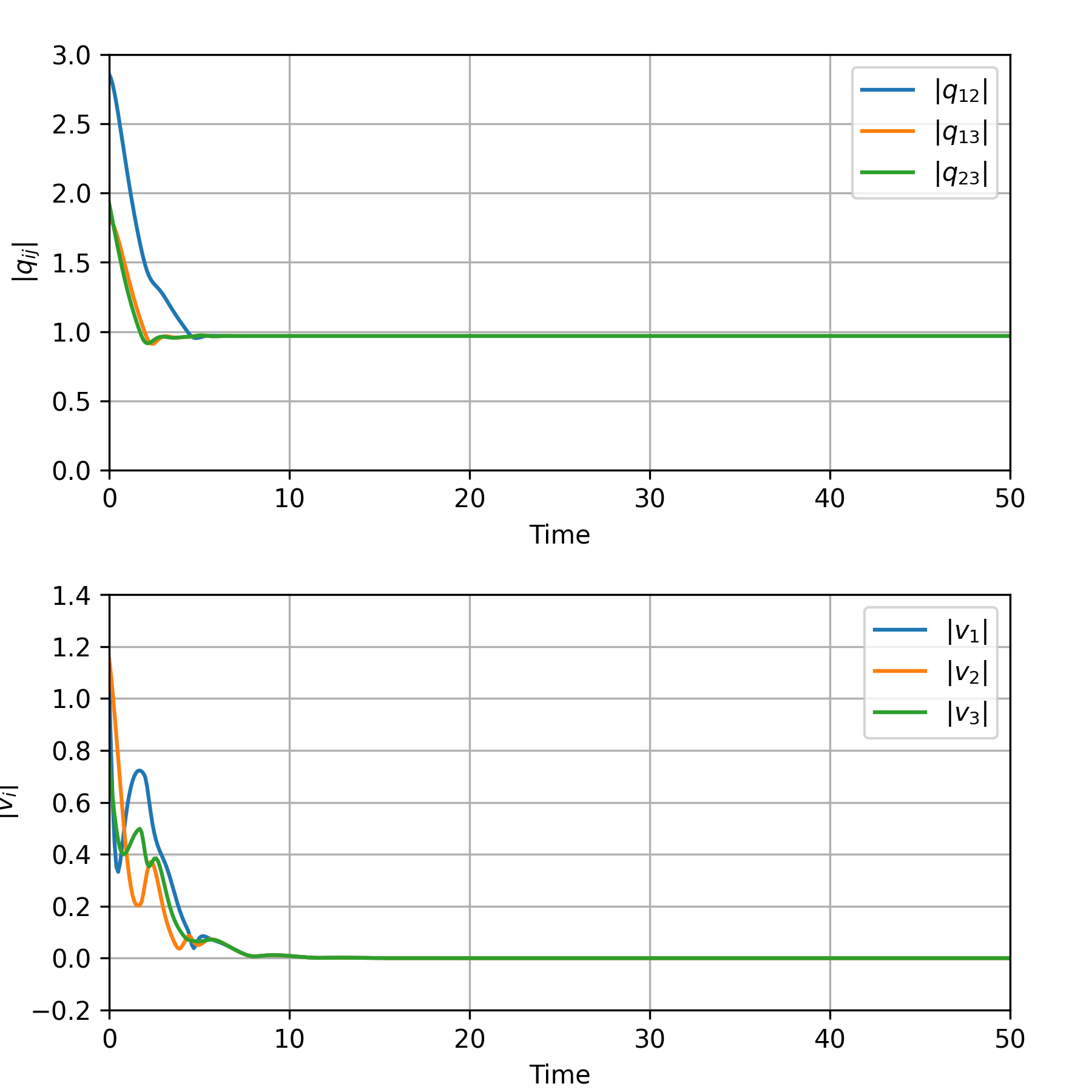}
		\caption{Regime (1)}
	\end{subfigure}
	\begin{subfigure}[t]{0.45\linewidth}
		\includegraphics[width=1\textwidth]{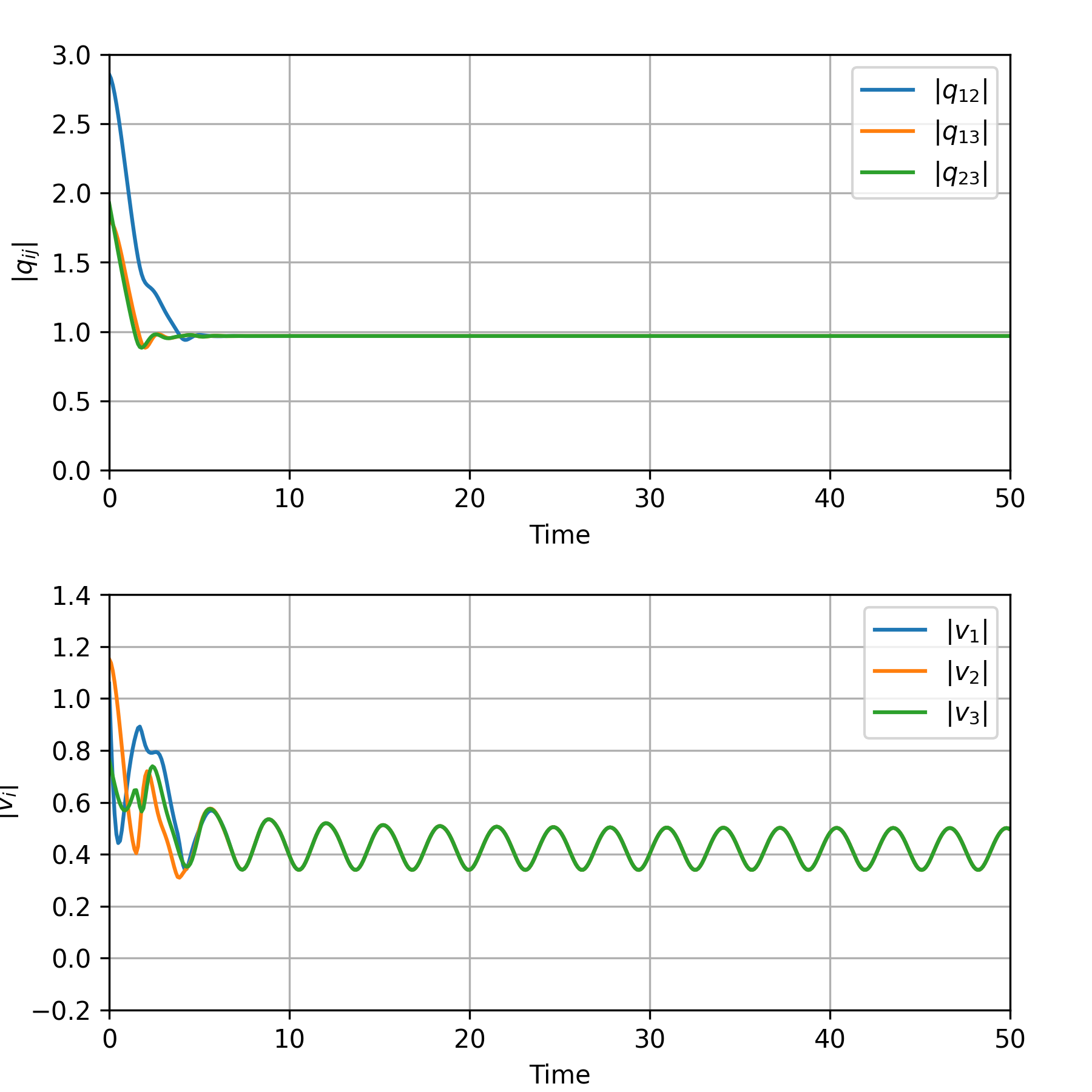}
		\caption{Regime (3)}
	\end{subfigure}
	\caption{Differences of agents' positions and agents' velocities for a group of $N=3$ agents moving in regimes (1) and (3) with $r_{0}=0$. The colored lines represent corresponding measurements for different agents.}
	\label{comp:fig:wob}
\end{figure}
Whereas in both cases the spatial formation of the group converges to a static configuration, the asymptotic behavior of agents' velocities is qualitatively different in the two regimes.
In regime (1),  $v_{0} = 0.5$ so that the explicit periodic solution given by \eqref{wob:eq:linear-periodic} is observed in the velocity plot of Figure \ref{comp:fig:wob} (a).
In turn, regime (3) has $v_{0} = 0$ implying that the system converges to an equilibrium solution (see Theorem \ref{diss:thm:flocking}), which can be observed in Figure \ref{comp:fig:wob} (b).

\section{Energy-efficient configurations of the self-propulsion forces}
\label{section:opt}

In this section, we analyze the energy efficiency of nonlinear navigational feedback forces. 
We demonstrate how the forces can be fine-tuned to reduce on-board energy consumption. 
Through numerical simulations, we identify control parameter configurations that minimize the total battery drain of the group while maintaining a desired quality of flock formation. 
We consider the scenario where $k(s) = p(s) = s$, making our navigational feedback forces linear and identical to those of the O-S model when the activation thresholds $r_0$ and $v_0$ are set to zero. 
In this way, we provide a comparative analysis between our navigational feedback forces and the linear ones.
In particular, we show that configurations with $r_{0} = 0$ are suboptimal, illustrating sub-optimality of linear forces used in the O-S model.

Working with the non-dimensional units introduced in the previous section, let $\Gamma = \{ (\vq_{l}^{\prime}(t^{\prime}), \vv_{l}^{\prime}(t^{\prime})), 0 \le t^{\prime} \le T^{\prime}\}$ be a target trajectory that the group must follow to accomplish a certain task, where $T^{\prime} > 0$ is the terminal time of the task.
Let $\vtheta = (\alpha^{\prime}, r_{0}^{\prime}, \beta^{\prime}, v_{0}^{\prime}) \in \Theta$ denote the vector of the control parameters of the self-propulsion forces, where $\Theta \subseteq \mathbb{R}^{4}$ is the set of feasible values. 
Let also $\left(\vq_{i}^{\prime} (t^{\prime}; \vtheta), \v{v}_{i}^{\prime} (t^{\prime}; \vtheta) \right), 0 \le t^{\prime} \le T^{\prime}$, be the solution of \eqref{comp:dynamics-nondim} subject to some initial conditions $(\vQ_{0}^{\prime}, \vV_{0}^{\prime})$ for the target trajectory $\Gamma$ and the parameters configuration $\vtheta$.
Then the average battery drain of the group for the above task is given by
\begin{equation}
	\label{opt:eq:u}
	\bar{U} (\Gamma, \vQ_{0}^{\prime}, \vV_{0}^{\prime}, \vtheta) = \frac{1}{T^{\prime} N} \sum_{i = 1}^{N} \int_{0}^{T^{\prime}} 
	\abs{ 
		\v{u}^{\prime}_{i} \left(\v{q}_{i}^{\prime} (\tau; \vtheta), \vv_{i}^{\prime}(\tau; \vtheta), \v{q}_{l}^{\prime}(\tau), \vv_{l}^{\prime}(\tau); \vtheta \right) 
	} d\tau.
\end{equation}
The quantities
\begin{equation}
	\label{opt:eq:q-v-dev}
	\begin{gathered}
		\bar{Q}_{dev} (\Gamma, \vQ_{0}^{\prime}, \vV_{0}^{\prime}, \vtheta) = \frac{1}{T^{\prime} N} \sum_{i = 1}^{N} \int_{0}^{T^{\prime}} \abs{\v{q}^{\prime}_{i} (\tau; \vtheta) - \vq_{l}^{\prime}(\tau)} d\tau,
		\\
		\bar{V}_{dev} (\Gamma, \vQ_{0}^{\prime}, \vV_{0}^{\prime}, \vtheta) = \frac{1}{T^{\prime} N} \sum_{i = 1}^{N} \int_{0}^{T^{\prime}}  \abs{\vv_{i}^{\prime}(\tau; \vtheta) - \vv_{l}^{\prime}(\tau)} d\tau,
	\end{gathered}
\end{equation}
characterize the overall quality of the group's formation during the task execution by measuring the average deviations of the group from the target position and the target velocity, respectively.
We aim to identify parameter configurations $\vtheta$ that, for given $\Gamma$, $\vQ_{0}^{\prime}$, $\vV_{0}^{\prime}$, yield the smallest possible values of $\bar{U} (\Gamma, \vQ_{0}^{\prime}, \vV_{0}^{\prime}, \vtheta)$ while keeping $\bar{Q}_{dev} (\Gamma, \vQ_{0}^{\prime}, \vV_{0}^{\prime}, \vtheta)$ and $\bar{V}_{dev} (\Gamma, \vQ_{0}^{\prime}, \vV_{0}^{\prime}, \vtheta)$ below reasonable thresholds.

\subsection{Simulations setting}

As in the previous section, we consider a group of $N = 100$ agents in a $3$-dimensional space and use the same parameters of the ambient forces. 
The initial positions $\vQ_{0}^{\prime}$ are drawn from the uniform distribution on the ball $B(\vq_{l}^{\prime}(0), 10)$, and the initial velocities $\vV_{0}^{\prime}$ are drawn from the truncated normal distribution with mean $\vq_{l}^{\prime}(0)$, covariance matrix $0.25 I_{d}$, and support $B(\vq_{l}^{\prime}(0), 0.5)$.
We consider a trajectory $\Gamma$ of the virtual leader that is composed of the following 3 stages: (1) a set of constant acceleration intervals required for the group to move from the initial location to the target location (2) multiple rotations about the target location with constant angular speed (3) a set of constant acceleration intervals required for the group to return to the initial location.
We assume that such a trajectory might be typical in practical applications.
Finally, we choose the feasible set for the control parameters to be
\begin{equation}
	\label{opt:eq:region}
	\Theta = \{ 
	\left(
	\alpha^{\prime}, r_{0}^{\prime}, \beta^{\prime}, v_{0}^{\prime} \right) \in \R^{4} 
	\mid 0 \le \alpha^{\prime} \le 5, \ 0 \le  r_{0}^{\prime} \le 20, \ 0 \le \beta^{\prime} \le 5, \ 0 \le v_{0} \le 2 \}.
\end{equation}

To capture general trends in the effects of control parameter adjustments, we consider a grid $\Xi$ of $26 \times 26 \times 26 \times 26$ equally spaced points on $\Theta$.
For each point in $\Xi$, we perform a simulation using the specified target trajectory and initial conditions, measuring \eqref{opt:eq:u} and \eqref{opt:eq:q-v-dev}.
Figure \ref{opt:fig:grid-1d} shows the values of $\bar{U}$, $\bar{Q}_{dev}$, and $\bar{V}_{dev}$ averaged across all but the $k$-th component of $\vtheta$ for $k = 1, 2, 3, 4$. 
Similarly, Figure \ref{opt:fig:grid-2d} shows the values of $\bar{U}$, $\bar{Q}_{dev}$, and $\bar{V}_{dev}$ averaged across all pairs of the components of $\vtheta$.

\begin{figure}[h]
	\includegraphics[width=1\textwidth]{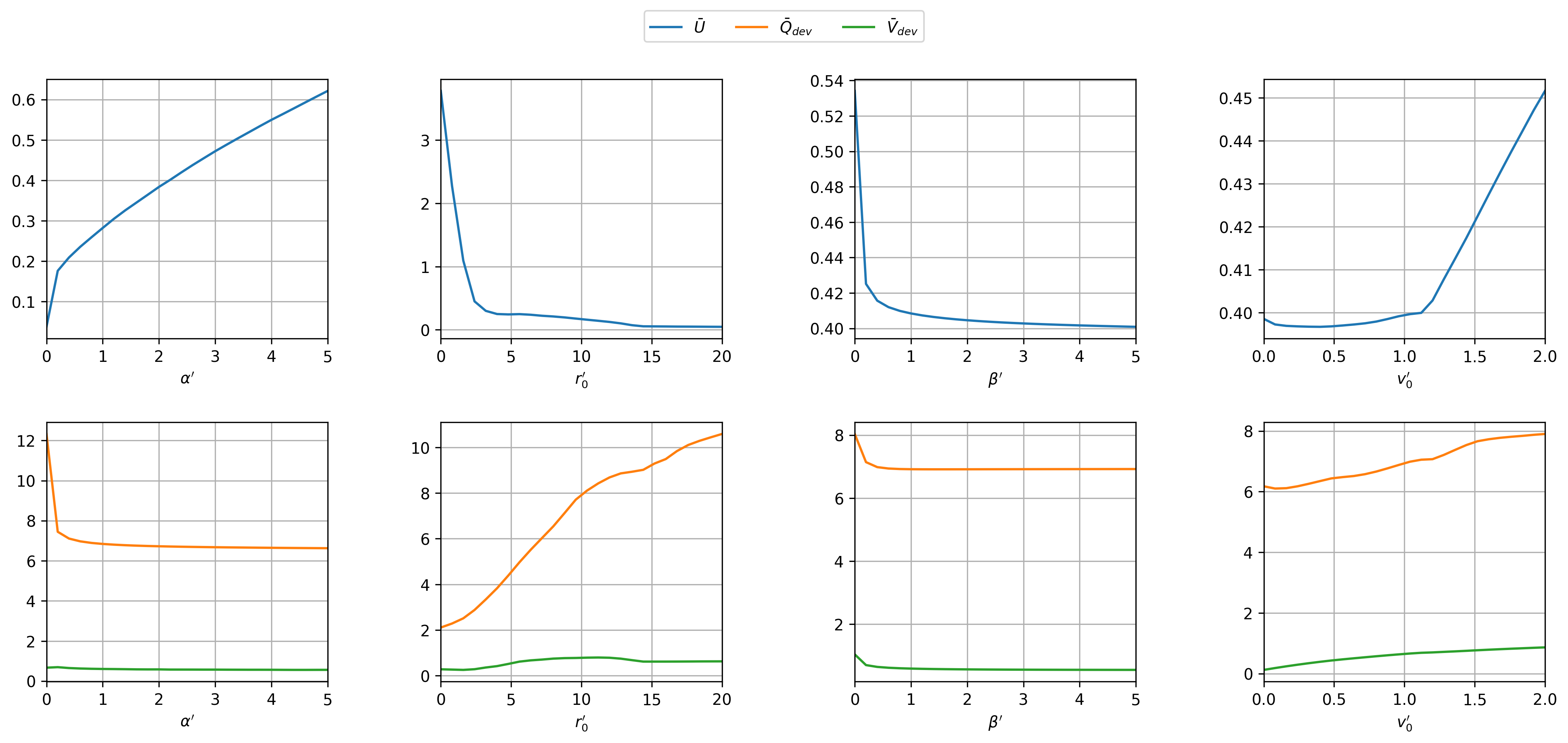}
	\caption{Values of $\bar{U}$, $\bar{Q}_{dev}$, and $\bar{V}_{dev}$ evaluated on $\Xi$, averaged across all but the $k$-th component of $\vtheta$ for $k = 1, 2, 3, 4$.}
	\label{opt:fig:grid-1d}
\end{figure}

\begin{figure}[h!]
	\includegraphics[width=1\textwidth]{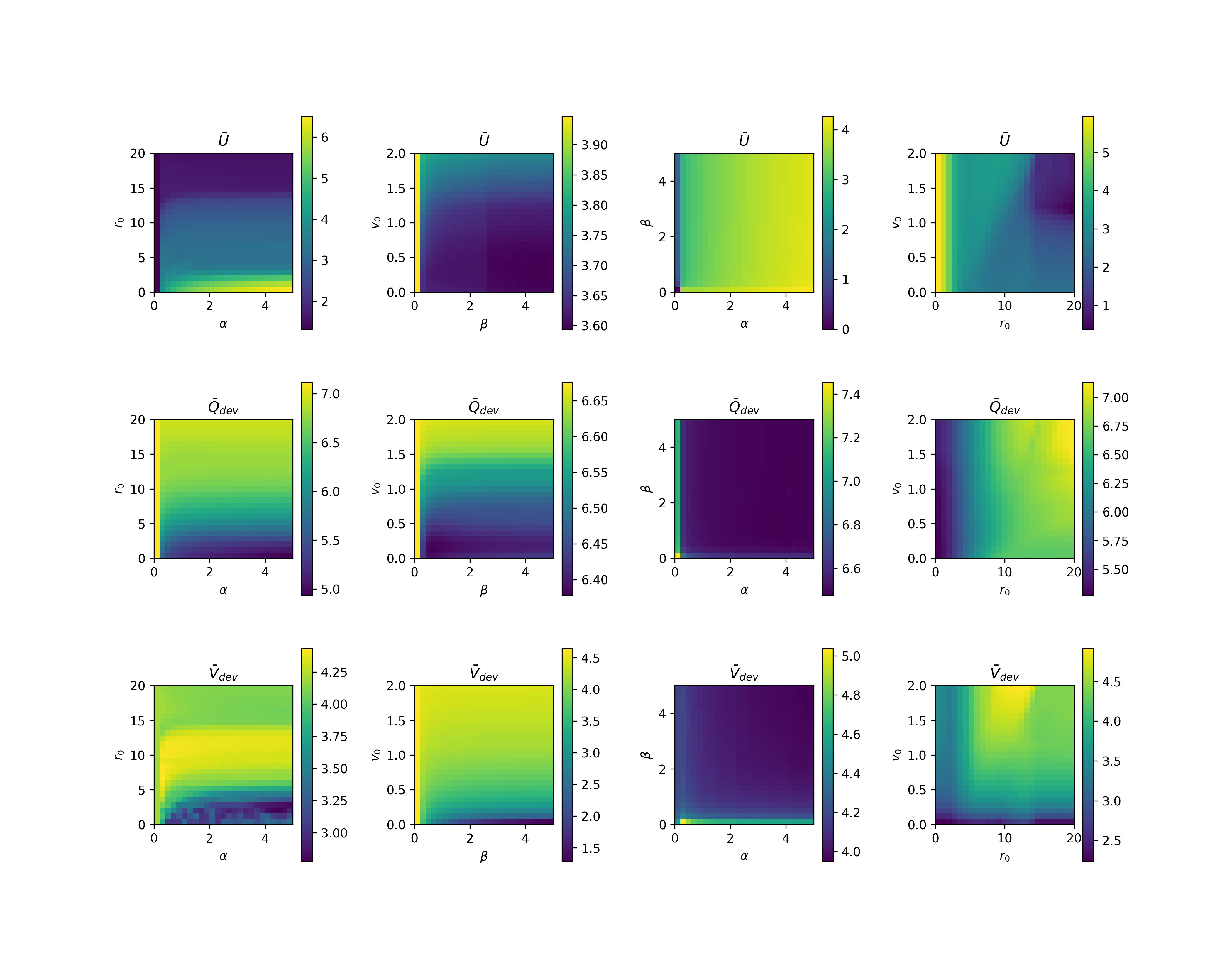}
	\caption{Values of $\bar{U}$, $\bar{Q}_{dev}$, and $\bar{V}_{dev}$ evaluated on $\Xi$, averaged across all all pairs of the components of $\vtheta$.}
	\label{opt:fig:grid-2d}
\end{figure}

\subsection{Discussion of the results}

Figure \ref{opt:fig:grid-1d} reveals that, on average, adjustments to the magnitudes of the position and the velocity alignment forces have opposite effects on the battery drain: increasing $\alpha$ leads to an increase in battery drain, whereas increasing $\beta$ results in a decrease in battery drain.
However, for $\alpha = 0$, the trend for $\beta$ is inverted, and $\bar{U}$ becomes an increasing function of $\beta$ (see Figure \ref{opt:fig:grid-2d}).
Since configurations with $\alpha = 0$ lead to potentially unacceptable spatial divergence, such scenarios are not of interest to us.
Also, amplifying $\beta$ has a notable effect on both $\bar{Q}_{dev}$ and $\bar{V}_{dev}$, whereas amplifying $\alpha$ mainly affects $\bar{Q}_{dev}$.
Therefore, in a typical scenario, it is preferable to perform alignment of small deviations from the target trajectory using the velocity alignment force.
This will reduce the amount of work done by the position alignment force, which should only be activated when strong spatial deviations occur.

Aside from tuning the magnitude of the forces, their effect can be adjusted by configuring the corresponding activation thresholds $r_{0}$ and $v_{0}$.
Decreasing $r_{0}$ enforces a tighter packing of the group and is naturally associated with a battery drain increase.
Figure \ref{opt:fig:grid-1d} shows that a rapid increase in on-board energy consumption occurs when $r_{0}$ falls below approximately 3.5.
Recall that $r_{0}^{\prime} = 3.23$ is the activation threshold that yields the optimal density $\rho = 0.74$.
For the values of $r_{0}^{\prime}$ that are less than this threshold, the position alignment force must be switched on at all times to counteract the action of the ambient conservative force, draining the battery.
Hence, such values are inefficient, and $r_{0}$ should generally be set to the maximum value sufficient to control group spatial dispersion.
Clearly, such a configuration is unattainable for the linear control forces.

Although $\bar{U}$ as a function of the threshold $v_{0}$ attains its minimum at a point distinct from zero, using this value in real-world applications is likely to be inefficient given the steady increase of $\bar{Q}_{dev}$ and $\bar{V}_{dev}$ with increasing $v_{0}$.
Therefore, a larger magnitude and a smaller activation threshold must be used for the velocity alignment force to ensure tight control.

\subsection{Constrained optimization problem}

Determining the actual values of the control parameters that are optimal for a particular task can be approached as either a constrained optimization problem, where one finds the minimum of $\bar{U}$ imposing upper bounds on $\bar{Q}_{dev}$ and $\bar{V}_{dev}$, or a as multi-objective optimization problem, where all the three quantities must be minimized simultaneously.
Taking the former approach, we consider the problem
\begin{eqnarray}
	\label{opt:eq:cosntr-opt-problem}
	\min_{\vtheta \in \Theta} & \bar{U} (\Gamma, \vQ_{0}^{\prime}, \vV_{0}^{\prime}, \vtheta) 
	\nonumber
	\\
	\text{s.t.} & \bar{Q}_{dev} (\Gamma, \vQ_{0}^{\prime}, \vV_{0}^{\prime}, \vtheta) \le Q_{max}
	\\
			    & \bar{V}_{dev} (\Gamma, \vQ_{0}^{\prime}, \vV_{0}^{\prime}, \vtheta) \le V_{max},
	\nonumber
\end{eqnarray}
where $Q_{max}, V_{max} \ge 0$. 

\begin{figure}[h]
	\centering	
	\begin{subfigure}[t]{0.45\linewidth}
		\includegraphics[width=1\textwidth, trim={2.5cm 0 2.5cm 0},clip]{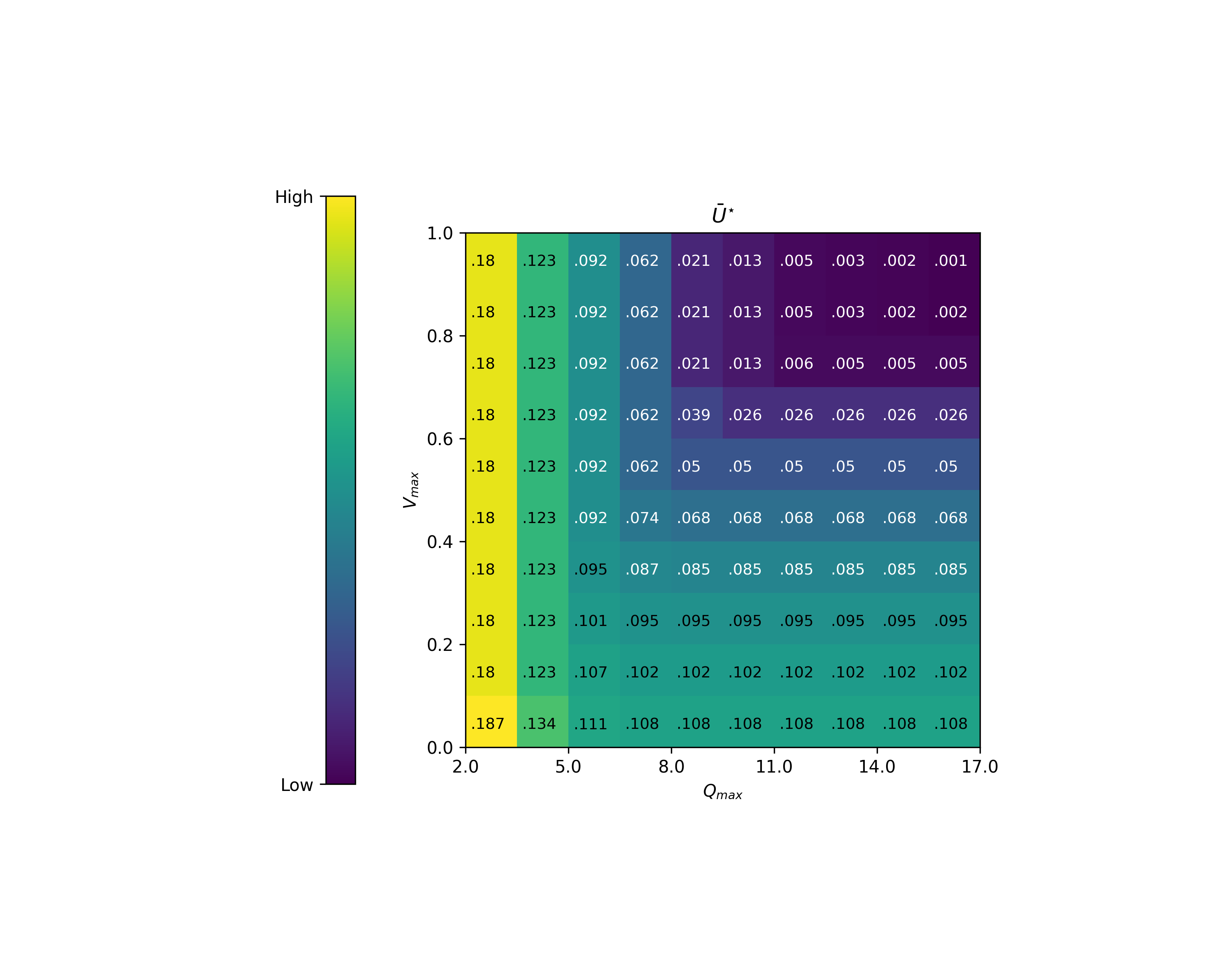}
		\caption{Minima of $\bar{U}$}
	\end{subfigure}
	\begin{subfigure}[t]{0.45\linewidth}
		\includegraphics[width=1\textwidth, trim={2.5cm 0 2.5cm 0},clip]{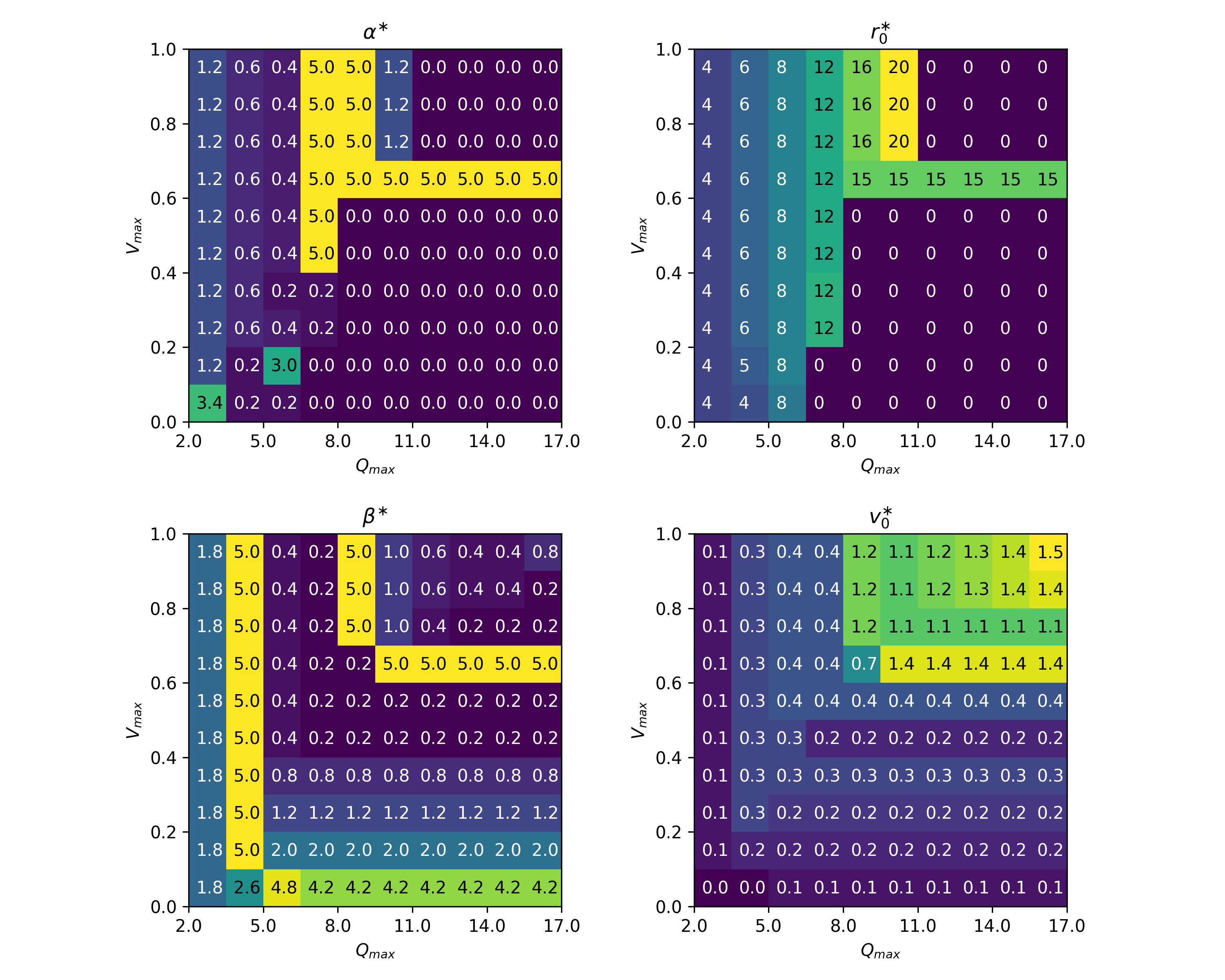}
		\caption{Minimizers of $\bar{U}$}
	\end{subfigure}
	\caption{Solutions of the problem \eqref{opt:eq:cosntr-opt-problem} with $\Theta = \Xi$}
	\label{opt:fig:constrained}
\end{figure}

Putting $\Theta = \Xi$ in \eqref{opt:eq:cosntr-opt-problem}, we solve the problem numerically using the collected data. 
Figure \ref{opt:fig:constrained} shows the solutions for various $Q_{max}$ and $V_{max}$ values.

The absence of monotonic trends for $\alpha$ and $\beta$ seen in Figure \ref{opt:fig:constrained} (b) demonstrates that the true geometry of $\bar{U}(\vtheta)$ as a surface in $\mathbb{R}^{4}$ is more complex than it is suggested by Figure \ref{opt:fig:grid-1d}, where some geometrical features are smoothed out by averaging.

The strictest enforcement of the flock formation is associated with the optimal parameter configuration given by 
\begin{equation*}
	\alpha^{\prime} = 3.4,
	\
	r_{0}^{\prime} = 4,
	\
	\beta^{\prime} = 1.8,
	\
	v_{0}^{\prime} = 0.
\end{equation*}
Relaxing constraints on $Q_{max}$ and $V_{max}$ results in an increase in both the intensity and activation thresholds of the forces. 
The individual importance of the position and the velocity alignment forces in group alignment depends on whether the upper limit on $Q_{max}$ is comparatively stronger than that on $V_{max}$, or vice versa. 
Simultaneously, for certain high values of $Q_{max}$ and $V_{max}$, the position alignment force can become negligible, while the velocity alignment force remains active.
This observation aligns with our previous suggestion emphasizing the relative importance of the velocity alignment force.

\section{Conclusion}

We investigated a model of collective motion of a swarm of miniature robots that are controlled by nonlinear navigational feedback virtual forces.
While the physical nature of the governing forces in our model is different, mathematically, it can be seen as a generalization of the model of Olfati--Saber \cite{olfati2006flocking}. 
Although, in general, our system cannot be guaranteed to be dissipative, we showed that, for navigational feedback forces that are bounded perturbations of linear ones, it possesses a global attractor that is characterized by flocking dynamics with bounded deviations of agents' trajectories from the trajectory of the virtual leader.
We obtained explicit bounds on these deviations and showed how these bounds can be controlled by the tunable parameters of the navigational feedback forces.
We also established conditions under which the velocities of all agents converge to the velocity of the center of mass of the group at an exponential rate.

When the system is dissipative, we showed that the attractor can contain non-equilibrium solutions.
We constructed examples of such solutions and obtained some sufficient conditions on the navigational feedback forces under which such solutions cannot exist.

The theoretical findings are supported by numerical simulations.
We also provided a case study of the energy efficiency of the collective dynamics in our model and identified configurations of the control parameters that minimize the on-board energy consumption.

\section*{Acknowledgments}
\noindent The authors were partially supported by a W. M. Keck Foundation grant. 
O.D. would like to thank Vladyslav Oles for their invaluable help with the preparation of this paper.

\begin{appendices}
\section{Proof of Lemma \ref{wob:lemma:dense}}
\label{appendix:dense_lemma_proof}

\begin{proof}
Fix an open interval $I \subseteq \R$. 
Suppose that $I \cap S$ is dense in $I$, but there exists $x_{0} \in I$ such that $f(x_{0}) = \gamma > 0$.
Clearly, $x_{0} \not\in I \cap S$, and therefore $x_{0}$ is a limit point of $I$.
By continuity of $f$, there exists $\delta > 0$ such that $\abs{f(x) - f(x_0)} = \abs{f(x) - \gamma} < \gamma$ for all $x \in (x_{0} - \delta, x_{0} + \delta)$. 
Then $-\gamma < f(x) - \gamma$, and hence $f(x) > 0$ on $\tilde{I} = (x_{0} - \delta, x_{0} + \delta) \cap I$.
That is, $\tilde{I}$ does not contain any elements of $I \cap S$, which is a contradiction.
\end{proof}

\section{Battery drain function}
\label{appendix:propeller}
We assume that each agent is set in motion by a propeller that is powered by a DC motor connected to a battery. 
In this case, the battery drain is proportional to the input current $I$ of the DC motor. 
In turn, the current is related to the torque $\v{T}$ produced by the motor as 
\begin{equation*}
	\abs{\v{T}} = \frac{1}{k_{T}} I,
\end{equation*}
where $k_{T}$ is the torque constant \cite{hughes2013electric}. 
Finally, the self-propulsion force, created by the thrust of the propeller, is related to the torque as 
\begin{equation*}
	\abs{\v{u}} = \frac{K_{T} \abs{\v{T}}}{K_{Q} D},
\end{equation*}
where $D$ is the propeller's diameter, and $K_{T}$ and $K_{Q}$ are torque coefficient and thrust coefficient, respectively \cite{carlton2012marine}.
Since $D$, $K_{T}$, and $K_{Q}$ are constants determined by the geometry of the propeller, we get 
\begin{equation*}
	\label{opt:eq:drain-prop}
	\text{agent's battery drain} \propto I \propto \abs{ \v{T} } \propto \abs{ \v{u} }.
\end{equation*} 
\end{appendices}

\bibliographystyle{ieeetr}
\bibliography{main.bib}

\end{document}